\documentclass[bibay2,reqno,jsl]{asl}

\usepackage{bussproofs}
\usepackage[hidelinks]{hyperref}
\usepackage{tabularx}

\usepackage[utf8]{inputenc}

\let\citet\cite
\let\citealt\cite
\def\citep#1{[\citeauth{#1} \citeyear{#1}]}

\let\epsilon\varepsilon
\def\Real{\mathbb{R}}
\def\lif{\mathbin{\rightarrow}}
\def\liff{\mathbin{\leftrightarrow}}
\def\et{{\ensuremath{\epsilon\tau}}}
\def\eps{\ensuremath{\epsilon}}
\def\meps#1#2{\epsilon_{#1}\,#2}
\def\mtau#1#2{\tau_{#1}\,#2}
\def\deg#1{\mathrm{deg}(#1)}
\def\rk#1{\mathrm{rk}(#1)}
\def\Lo{\mathbf{L}}
\def\IL{\ensuremath{\mathbf{H}}}
\def\LC{\ensuremath{\mathbf{LC}}}
\def\LIN{\ensuremath{\mathit{Lin}}}
\def\KC{\ensuremath{\mathbf{KC}}}
\def\J{\ensuremath{J}}
\def\Kur{\ensuremath{K}}
\def\CD{\ensuremath{\mathit{CD}}}
\def\Eout{\ensuremath{Q_\exists}}
\def\Aout{\ensuremath{Q_\forall}}
\def\Wel{\ensuremath{\mathit{Wel}_1}}
\def\DWel{\ensuremath{\mathit{Wel}_2}}
\def\Bm#1{{\ensuremath{B_{#1}}}}
\def\CL{\ensuremath{\mathbf{C}}}
\def\G{\ensuremath{\mathbf{G}}}
\def\IP{\ensuremath{\mathit{IP}_<}}
\def\IR{\ensuremath{\mathit{IR}_<}}
\def\Ax{\mathit{Ax}}

\def\fCenter{\lif}

\def\pl#1{#1}
\def\ql#1{\mathbf{Q}#1}
\def\etl#1{#1\et}
\def\eo{{<\eps}}
\def\etx#1{#1\et}
\def\proves#1{#1 \vdash}

\let\mathtau\tau
\def\tau{\ensuremath{\mathtau}}

\theoremstyle{plain}
\newtheorem{thm}{Theorem}[section]
\newtheorem{lem}[thm]{Lemma}
\newtheorem{prop}[thm]{Proposition}

\newtheorem{cor}[thm]{Corollary}
\theoremstyle{definition}
\newtheorem{defn}[thm]{Definition}

\newtheorem{rem}[thm]{\em Remark}

\title{Epsilon Theorems in Intermediate Logics}

\keywords{epsilon calculus, intermediate logic, Herbrand's theorem}
\subjclass{03F05, 03B20, 03B55}

\dedicatory{Dedicated to the memory of Grisha Mints}

\author[Matthias Baaz]{Matthias Baaz$^*$}
\revauthor{Baaz, Matthias}
\thanks{$^*$Research supported by the Austrian Science Fund in projects P31063, I4427, and P31955.} 
\address{Institute of Discrete Mathematics and Geometry\\
Vienna University of Technology\\
Wiedner Hauptstrasse 8--10\\
1040 Vienna, Austria}
\email{baaz@logic.at}

\author[Richard Zach]{Richard Zach$^\dag$}
\revauthor{Zach, Richard}
\thanks{$^\dag$Research supported by the Natural Sciences and Engineering Council of Canada.}
\address{Department of Philosophy\\
University of Calgary\\
2500 University Dr NW\\
Calgary AB T2N~1N4, Canada}
\email{rzach@ucalgary.ca}
\urladdr{https://richardzach.org/}

\begin{document}

\begin{abstract}
  Any intermediate propositional logic (i.e., a logic including
  intuitionistic logic and contained in classical logic) can be
  extended to a calculus with epsilon- and tau-operators and critical
  formulas.  For classical logic, this results in Hilbert's
  \eps-calculus. The first and second \eps-theorems for classical
  logic establish conservativity of the \eps-calculus over its
  classical base logic. It is well known that the second \eps-theorem
  fails for the intuitionistic \eps-calculus, as prenexation is
  impossible. The paper investigates the effect of adding critical
  \eps- and \tau-formulas and using the translation of quantifiers
  into \eps- and \tau-terms to intermediate logics. It is shown that
  conservativity over the propositional base logic also holds for such
  intermediate \et-calculi. The ``extended'' first \eps-theorem holds
  if the base logic is finite-valued G\"odel-Dummett logic, fails
  otherwise, but holds for certain provable formulas in
  infinite-valued G\"odel logic. The second \eps-theorem also holds
  for finite-valued first-order G\"odel logics. The methods used to
  prove the extended first \eps-theorem for infinite-valued G\"odel
  logic suggest applications to theories of arithmetic.
\end{abstract}

\maketitle

\section{Introduction}

The \eps-calculus was originally introduced by Hilbert as a
formalization of classical first-order logic. It is a way to reduce
proofs in first-order logic to proofs in propositional logic from
so-called critical formulas, where the role of quantifiers is taken
over by certain terms.  The \eps-calculus was the basis for Hilbert's
approach to proof theory (in particular, consistency proofs). 
It still is a useful logical formalism with interesting properties and
theoretical and practical
applications. 

The \eps-calculus is formulated by allowing for terms of the form
$\meps{x}{A(x)}$ for any formula $A(x)$ with $x$ free.  A formula of
the form $A(t) \lif A(\meps{x}{A(x)})$ is called a \emph{critical
formula} belonging to $\meps{x}{A(x)}$. A proof in the \eps-calculus
is a proof in the quantifier-free fragment of classical logic from
critical formulas. It is now possible to define the existential
quantifier by $\exists x\, A(x) \equiv A(\meps{x}{A(x)})$, and---in
classical logic---the universal quantifier by $\forall x\, A(x) \equiv
A(\meps{x}{\lnot A(x)})$. A formula~$A$ with quantifiers can thus be
translated into a formula~$A^\eps$ with \eps-terms but without
quantifiers. A formula is provable in classical first-order
logic~$\ql{\CL}$ iff its translation is provable in the \eps-calculus.

Hilbert proved two fundamental results about the \eps-calculus:
\begin{enumerate}
  \item The extended first \eps-theorem: If $A(\vec e)$ is derivable
    in the \eps-calculus ($\vec e$ a tuple of $\eps$-terms), then
    there are tuples of terms $\vec t_1$, \dots, $\vec t_n$ such that
    $A(\vec t_1) \lor \ldots \lor A(\vec t_n)$ is provable in
    propositional logic. 
  \item The second \eps-theorem: If $A^\eps$ is the standard
  \eps-translation of a first-order formula derivable in the
  \eps-calculus, $A$~is derivable in first-order logic.
\end{enumerate}
The extended first \eps-theorem has two important consequences. The
first consequence is what Hilbert simply
called the first \eps-theorem: If $A$ is \eps-free and
derivable in the \eps-calculus, it is derivable in the quantifier-free
fragment of first-order logic, the so-called elementary calculus of
free variables (i.e., without critical formulas,
indeed, without any use of \eps-terms). This implies that the
\eps-calculus is conservative over propositional logic for
quantifier-free formulas without identity. The second consequence is
Herbrand's theorem for existential formulas: If $\exists \vec x\,
A(\vec x)$ is provable in the \eps-calculus, then some disjunction
$A(\vec t_1) \lor \ldots \lor A(\vec t_n)$ is provable in
propositional logic alone, i.e., is a tautology.\footnote{See
\cite{HilbertBernays1939} for the first presentation of the
\eps-theorems, \cite{AvigadZach2002} for a survey, and
\cite{MoserZach2006} for a modern presentation.} 

In contrast to other proof theoretic methods which also yield the
existence of Herbrand disjunctions (such as cut-elimination), the
proof based on the extended first \eps-theorem, and consequently the
length of the Herbrand disjunction the proof yields, is insensitive to
the propositional complexity of the original proof. This is an
important advantage of methods based on the \eps-calculus.

Hilbert's results and proofs make essential use of classical
principles, especially the law of excluded middle. The question
naturally arises whether the results can also be obtained for weaker
logics, and whether the same proof methods can be used, i.e., whether
the use of excluded middle can be avoided.  In this paper, we
investigate \eps-calculi for intermediate logics, i.e., logics between
intuitionistic and classical logic, and specifically the question of
when the extended first \eps-theorem holds in such logics. Well-known
examples of intermediate logics are Jankov's logic of weak excluded
middle and finite- and infinite-valued G\"odel-Dummett logics.

We consider only intermediate logics for two reasons. One is that Hilbert's
methods rely essentially on the deduction theorem, and this holds in
intermediate logics but not in many other logics.  The other is that
\eps-calculi for intuitionistic and intermediate logics are of
independent interest. What is the effect of adding \eps-operators to
intermediate logics? When does the extended first \eps-theorem hold?
When is the \eps-calculus for a logic conservative over the
propositional base logic? We also, for the most part, discuss only
pure logics, i.e., logics without identity.

In intermediate logics it is necessary to introduce a separate
\tau-operator which defines the universal quantifier.  Whereas
$A(\meps x A(x))$ translates $\exists x\,A(x)$, $A(\mtau x A(x))$
translates $\forall x\,A(x)$.  The corresponding critical formulas are
those of the form $A(\mtau x A(x)) \lif A(t)$.  Weakening the logic
makes the addition of \tau{} necessary, since the equivalence of
$\forall x\, A(x)$ and $A(\meps x \lnot A(x))$ relies on the schema of
contraposition; the equivalence of $\forall x\, A(x)$ and $A(\mtau x
A(x))$ does not. The system resulting from a propositional
intermediate logic $\pl{\Lo}$ by adding \eps- and \tau-terms and
critical formulas is called the \et-calculus for~$\pl{\Lo}$.

It is well known that adding the \eps-operator to intuitionistic logic
in a straightforward way results in translations~$A^\eps$ of
intuitionistically invalid formulas~$A$ becoming provable.
\cite{Mints1977,Mints1990} has investigated different systems based on
intuitionistic logic with \eps-operators which are conservative and in
which the \eps-theorem holds. He allows the use of \eps-terms only
when $\exists x\,A(x)$ has been established; other approaches (e.g.,
\citealt{Shirai1971}) use existence predicates to accomplish the same.
We investigate the basic \et-calculus without this assumption. In our
\et-calculi, \et-terms are treated syntactically as something like
Skolem functions rather than semantically as choice operators.

We begin (Section~\ref{prelims}) by introducing intermediate logics.
In Section~\ref{sec:embedding} we introduce the \et-calculus and
consider the differences in formulas provable in a logic vs.\ those
provable in the corresponding \et-calculus.  This is essentially the
question of whether the addition of \eps- and \tau-terms and critical
formulas allows the derivation (or requires the validity) of formulas
not provable in the base logic.

We investigate in detail \emph{which} intuitionistically invalid
formulas are provable in \et-calculi for intermediate logics in Section~\ref{sec:quant-shift}.
We show that quantifier shift schemas play a special role here.  All
but three of these are valid in intuitionistic logic. The addition of
\et-terms and critical formulas results in the provability of the
remaining three. Consequently, no intermediate first-order logic in
which one of these three quantifier shifts is unprovable can have the
second \eps-theorem (Proposition~\ref{prop:qsprovet}). This includes
intuitionistic logic itself, logics complete for non-constant domain
Kripke frames, and infinite-valued G\"odel-Dummett logic.

In Section~\ref{sec:conservative}, we show that conservativity for the
propositional fragment nevertheless holds for all intermediate logics
(Theorem~\ref{thm:conservative}). This in itself is a surprising
result, even though the proof is very easy. It holds for theorems of
the logics with or without identity, however not in general for
provability from theories.

In Sections~\ref{sec:negative}--\ref{sec:hb-elim} we give a complete
characterization of the intermediate logics where the extended first
\eps-theorem holds. We show (Theorem~\ref{thm:forking}) that whenever
it holds, the underlying logic must prove (or validate) a
sentence~$\Bm m$ of the form
\[
(A_1 \lif A_2) \lor (A_2 \lif A_3) \lor \dots \lor (A_m \lif A_{m+1}).
\]
No $B_m$ is provable in intuitionistic logic, any logic complete for
Kripke frames with branching worlds, or in infinite-valued
G\"odel-Dummett logic. Consequently, the extended first \eps-theorem
does not hold for these logics.

Provability of $\Bm m$ is also a sufficient condition: We show that
the extended first \eps-theorem holds whenever the underlying logic
proves at least one~$\Bm m$.  The argument follows the idea of
Hilbert's proof, but does not make use of excluded middle. In order to
establish the result, we provide a more fine-grained analysis of the
proof of the extended first \eps-theorem. We first introduce the
notion of an elimination set in Section~\ref{sec:elim-sets}: a set of
terms which can replace an \eps-term in a proof and render the
corresponding critical formulas redundant, using only the resources of
the underlying propositional logic. This is the part of Hilbert's
proof that uses excluded middle. We isolate here how excluded middle
is used. In Section~\ref{sec:positive}, we show that logics that prove
some $\Bm m$ also have elimination sets.

If such elimination sets exist, the procedure given by Hilbert and
Bernays can be used for a proof of the extended first \eps-theorem
(Section~\ref{sec:hb-elim}). This establishes the second half of the
characterization, that the extended first \et-theorem holds in logics
that prove $\Bm m$, i.e., the finite-valued G\"odel logics
(Theorem~\ref{thm:et-Gm}). By putting emphasis on elimination sets, we
can also show that in logics in which the extended first \eps-theorem
does not hold in general, it may still hold for formulas of a special
form. This allows us to show that the extended first \et-theorem holds
for negated formulas in Jankov's logic of weak excluded middle
(Theorem~\ref{et-neg-KC}).  We also show that the first \et-theorem
(for theorems not containing \et-terms) holds for infinite-valued
G\"odel-Dummett logic (Theorem~\ref{thm:weak-eps-LC}).

The extended first \et-theorem is closely related to Herbrand's
theorem. We discuss this connection, as well as the second
\et-theorem, in Section~\ref{sec:herbrand}. We show that the second
\et-theorem holds for any intermediate predicate logic that proves all
quantifier shifts and has the extended first \et-theorem
(Proposition~\ref{prop:second}), e.g., $\ql{\LC_m} + \CD$, first-order
finite-valued G\"odel logic. 

In the case of some proofs, it is possible to eliminate \et-terms in a
simplified way where the cases that require the presence of $\Bm m$ do
not arise, and the linearity schema $\LIN$ ($(A \lif B) \lor (B \lif A)$) is enough. Although we cannot give an
independent characterization of the proofs or theorems for which this
is the case, the simplified procedure will sometimes terminate and
produce a Herbrand disjunction. Conversely, if a Herbrand disjunction
exists, there is always a proof of the original formula for which the
procedure terminates and produces the Herbrand disjunction
(Section~\ref{sec:LIN}).

This result sheds light on the conditions under which (a version of)
Hilbert's method which uses principles weaker than excluded middle
produces a Herbrand disjunction.  In fact, a similar method can be
used to give a partial \eps-elimination procedure for number theory,
where linearity of the natural order of~$\mathbb{N}$ plays a similar
role as the schema of linearity does in the case of logic
(Section~\ref{sec:order}).

\begin{table}
  \begin{tabular}{l|llr}
    \hline\hline
    & Propositional logics / Axioms\\
    \hline
    \IL{} & Intuitionistic logic & \\
    \KC & Logic of weak excluded middle: $\IL + J = \lnot A \lor \lnot\lnot
    A$\\
    \LC{} & infinite-valued G\"odel logic, linear Kripke frames\\
    & $\IL + \LIN = (A \lif B) \lor (B \lif A)$\\
    $\LC_m$ & $m$-valued G\"odel logic, 
     linear Kripke frames of length $< m$\\
     & $\IL + B_m = (A_1 \lif A_2) \lor \dots \lor 
     (A_m \lif A_{m+1})$\\
    \CL & Classical logic: 
      $\IL + A \lor \lnot A$, $\IL + B_2$\\
    
    \hline
    & First-order logics / Axioms\\
    \hline
    $\ql\IL$ & Intuitionistic logic \\
    $\ql\KC$ & Weak excluded middle: $\ql\IL + J$\\
    $\ql\LC$ & Linear Kripke frames: $\ql\IL + \LIN$\\
    $\ql\LC_m$ & $\ql{\IL} + B_m$\\
    $\G_\Real$ & G\"odel logic on $[0,1]$, constant-domain linear Kripke frames\\
          & $\ql\LC + \CD = \forall x(A(x) \lor B) \lif (\forall
          x\,A(x) \lor B)$\\
    $\G_0$ & G\"odel logic on $\{0\} \cup [1/2,1]$\\
     & $\ql\LC + \CD + K = \forall x\lnot\lnot A(x) \lif \lnot\lnot
     \forall x\,A(x)$\\
    $\G_m$ & $m$-valued G\"odel logic: $\ql{\IL} + B_m + \CD$\\
    $\ql\CL$ & Classical logic\\
  \hline\hline
  \end{tabular}
  \vspace{2pt}
  \caption{Intermediate logics considered}
  \label{logicsstable}
  \end{table}

\begin{table}
\begin{tabular}{@{}l|l|l|l}
  \hline\hline
  Result &  & Base logic & \\
  \hline
  Conservativity over & Yes &
  Any & 
  \ref{thm:conservative}\\
  propositional logic & & &\\
  First \et-Theorem & Yes & \LC & \ref{thm:weak-eps-LC}\\
  & Yes & Any (for negated formulas) & \ref{neg-IL-first-eps}\\
  Extended First \eps-Theorem & No & Any except $\LC_m$ &
  \ref{cor:negative}\\
  & Yes & \CL & \ref{first-eps-cl} \\
  & Yes & \LC, \KC{} (for negated formulas) &
  \ref{et-neg-KC}\\
  & Yes & $\LC_m$ & \ref{thm:et-Gm}\\
  Second \et-Theorem & No & $\G_\Real$, $\ql{\LC_m}$ & \ref{prop:no-second}\\
  & Yes & $\G_m$, $\ql\CL$ & \ref{second-gm}\\
\hline\hline
\end{tabular}
\vspace{2pt}

\caption{Epsilon theorems for intermediate logics}
\label{resultstable}
\end{table}

\section{Preliminaries}\label{prelims}

Intermediate propositional logics have been investigated extensively
since the 1950s. Intermediate predicate logics are comparatively less
well understood; however, they constitute an active area of research
(see \citealt{GabbayShehtmanSkvortsov2009}).  We begin by collecting
some preliminary definitions.

All the logics we consider are formulated in the standard language of
intuitionistic and classical logic with propositional connectives
$\land$, $\lor$, $\lif$, the constant~$\bot$ for absurdity, and the
quantifiers $\forall$ and~$\exists$. $\lnot A$ is defined as $A \lif
\bot$ and $\top$ as $\lnot \bot$. Terms and atomic formulas are
defined as usual. We allow $0$-place predicate symbols, i.e.,
propositional variables. A formula containing no quantifiers is called
\emph{quantifier-free} and a quantifier-free formula with only
$0$-place predicate symbols is called \emph{propositional}. For the
most part we will consider pure logics, i.e., logics not involving the
identity predicate~$=$. We assume that function symbols are available,
however.

We use $A$, $B$, \dots, as metavariables for formulas, and $t$, $s$,
\dots, as metavariables for terms. We write $A(x)$ to indicate that
$x$ occurs free in~$A$. The result of substituting $s$ for all (free)
occurrences of~$x$ in a term~$t$ or a formula~$A$ is indicated by
$t[s/x]$ or $A[s/x]$. When it is clear which variable~$x$ is intended,
we write $A(s)$ for $A[s/x]$. The result of replacing every occurrence
of a term $t$ by a term~$s$ in a formula~$A$ is indicated by $A[s/t]$.
A \emph{substitution instance} of a formula~$A$ is any formula
resulting from $A$ by uniformly replacing any number of atomic
formulas $P(t_1, \dots, t_n)$ by formulas $B[t_1/x_1, \dots, t_n/x_n]$
(in such a way that free variables of $B$ are not captured by
quantifiers in~$A$ of course). In particular, a substitution instance
of a propositional formula~$A$ is any formula resulting from $A$ by
uniformly replacing any number of $0$-place predicate symbols~$P$ by
formulas~$B$. 

\begin{defn}
  A \emph{propositional logic}~$\pl{\Lo}$ is a set of propositional
  formulas closed under substitution and modus ponens, i.e., if $\Lo$
  contains $A \lif B$ and $B$ it also contains~$B$. A \emph{predicate
  logic} is a set of formulas closed under subsitution, modus ponens,
  and the quantifier rules
  \[
  \Axiom$B \fCenter A(x)$
  \UnaryInf$B \fCenter \forall y\, A(y)$
  \DisplayProof
  \quad
  \Axiom$A(x) \fCenter B$
  \UnaryInf$\exists y\, A(y) \fCenter B$
  \DisplayProof
  \]
  which are subject to the eigenvariable condition: $x$ must not be free
  in the conclusion.
\end{defn}

Logics can be characterized by the formulas derivable from a set of
axiom schemas by modus ponens.  This is of course equivalent to
closing the axioms under substitution and modus ponens. For instance,
intuitionistic propositional logic~$\pl{\IL}$ is obtained from the
propositional axioms given in \citet[4.1]{Troelstra1988}. Classical
propositional logic $\pl{\CL}$ and predicate logic $\ql{\CL}$ are
obtained by adding~$\lnot A \lor A$ (or alternatively $\lnot\lnot A
\lif A$) as an axiom.

\begin{defn}
  An \emph{intermediate propositional logic}~$\pl{\Lo}$ is a
  propositional logic that contains $\pl{\IL}$ and is contained
  in~$\pl{\CL}$.
\end{defn}

The following intermediate propositional logics will play important
roles:
\begin{enumerate}
  \item \LC, characterized alternatively as the formulas valid on
  linearly ordered Kripke frames or as infinite-valued Gödel
  logic,\footnote{See \citep{Dummett1959}. Note that although there is
  only one infininite-valued G\"odel logic considered as a set of
  tautologies, there are infinitely many different consequence
  relations on infinite truth-value sets with the G\"odel truth
  functions \citep{BaazZach1998a}.} axiomatized over \IL{} using the
  schema
  \begin{equation*}
    (A \lif B) \lor (B \lif A). \tag{\LIN}
  \end{equation*}
  \item $\LC_m = \LC + \Bm m$, characterized as formulas valid on
  linearly ordered Kripke frames of height $< m$, or as the G\"odel
  logic on~$m$ truth values, also known as $\textbf{S}_{m-1}$
  \citep{Hosoi1966}. Here, $\Bm m$ is:
  \begin{equation*}
    (A_1 \lif A_2) \lor (A_2 \lif A_3) \lor \dots \lor 
    (A_m \lif A_{m+1}). \tag{\Bm m}
  \end{equation*}
  \item \KC, the logic of weak excluded middle \citep{Jankov1968},
  axiomatized over \IL{} using the schema
  \begin{equation*}
    \lnot A \lor \lnot\lnot A. \tag{\J}
  \end{equation*}
\end{enumerate}

If $\pl{\Lo}$ is an intermediate propositional logic, we can consider
the corresponding ``elementary calculus,'' i.e., the system obtained
by replacing propositional variables with atomic formulas of a
first-order language, with or without identity. We will be interested
in formulas provable in such an elementary calculus from a set of
assumptions~$\Gamma$.

\begin{defn}\label{defn:elem-calc-proof} Suppose $\Lo$ is an
  intermediate logic. A proof~$\pi$ of~$A$ from~$\Gamma$ in the
  corresponding \emph{elementary calculus} is a sequence $A_1$, \dots,
  $A_n =A$ of quantifier-free formulas such that each~$A_i$ is either a
  substitution instance of a formula in~$\Lo$, is in~$\Gamma$, or
  follows from formulas $A_k$ and $A_l$ ($k, l < i$) by modus ponens,
  and $A_n \equiv A$. We then write $\Gamma \vdash_{\Lo}^\pi A$. If
  such a proof~$\pi$ exists we write $\Gamma \vdash_{\Lo}$ and if
  $\Gamma$ is empty, simply $\proves{\Lo} A$. If $\Lo$ is not mentioned,
  we mean~\IL.
\end{defn}

Given a propositional logic~$\pl{\Lo}$, and possibly a set of
additional axiom schemas~$\Ax$ involving quantifiers, we can generate a
predicate logic:

\begin{defn}
If $\pl{\Lo}$ is a propositional logic, and $\Ax$ is a set of
formulas, then $\ql{\Lo} + \Ax$ is the smallest predicate logic
containing~$\pl{\Lo}$, the standard quantifier axioms
\begin{align*}
\forall x\, A(x) & \lif A(t) \text{\quad and}\\
A(t) & \lif  \exists x\, A(x),
\end{align*}
and the formulas in~$\Ax$.
\end{defn}

If $\ql{\Lo} + \Ax$ contains intuitionistic predicate logic~$\ql{\IL}$
and is contained in classical predicate logic~$\ql{\CL}$, it is called
an \emph{intermediate predicate logic}.

\begin{defn}\label{defn:proof} Suppose $\Lo$ is a propositional logic.
  A proof~$\pi$ of $A$ in $\ql{\Lo} + \Ax$ is a sequence $A_1$, \dots,
  $A_n = A$ of formulas of predicate logic such that each $A_i$ is either
  a substitution instance of a formula in~$\Lo$, of a standard
  quantifier axiom, of a formula in~$\Ax$, or follows from previous
  formulas by modus ponens or a quantifier rule. We then write $
  \ql{\Lo} + \Ax \vdash^\pi A$. If such a proof~$\pi$ exists we write
  $\ql{\Lo} + \Ax \vdash A$.
\end{defn}

$A$ is a formula in $\ql{\Lo} + \Ax$ iff $\proves{\ql{\Lo}
+ \Ax} A$.  If $\Lo{}$ itself is characterized by a set of
propositional axioms, it is enough to require substitution instances
of axioms of~$\Lo{}$ in the above definition. For instance,
taking~\IL{} as above, the proofs in intuitionistic predicate
logic~$\ql{\IL}$ are just the proofs in the system $H_2$-\textbf{IQC}
of \citet[4.3]{Troelstra1988}.

If $\Ax$ is empty, $\ql{\Lo} = \ql{\Lo} + \Ax$ is the weakest pure
intermediate predicate logic extending~$\pl{\Lo}$. For instance,
$\ql{\LC}$ is the weakest predicate logic obtained from~$\LC$, and is
axiomatized by $\ql{\IL} + \LIN$. It is complete for linearly-ordered
Kripke frames (see \cite{Corsi1992,Skvortsov2005}). 

It is possible to extend the weakest predicate logic of an
intermediate propositional logic~$\pl{\Lo}$ by adding
intuitionistically invalid schemas~$\Ax$ involving quantifiers. Some
important examples are the constant domain principle,
\begin{align*}
  \forall x(A(x) \lor B) & \lif (\forall x\,A(x) \lor B), \tag{\CD}
\intertext{the double negation shift (or Kuroda's principle),}
  \forall x\,\lnot\lnot A(x) & \lif \lnot\lnot \forall x\,A(x) \tag{\Kur}
\intertext{and the quantifier shifts}
  (B \lif \exists x\,A(x)) & \lif \exists x(B \lif A(x)) \tag{\Eout}\\
  (\forall x\,A(x) \lif B) & \lif \exists x(B \lif A(x)) \tag{\Aout}
\end{align*}
$\ql{\LC} + \CD$ axiomatizes the formulas valid in linearly-ordered
Kripke frames with constant domains, and also the first-order G\"odel
logic~$\G_\Real$ of formulas valid on the interval~$[0, 1]$. $\ql{\IL}
+ \Kur$ characterizes the formulas valid on Kripke frames with the
McKinsey property and $\ql{\LC} + \CD + \Kur$ is the logic of linear
Kripke frames with maximal element. It is also the first-order G\"odel
logic~$\G_0$ of formulas valid on $\{0\} \cup [1/2,1]$. $\ql{\LC_m}$
is not complete for linear Kripke frames of height $< m$ (contrary to
what one might expect). However, $\G_m = \ql{\LC_m} + \CD$ is complete
for linearly ordered Kripke frames of height $<m$ with constant
domains. It is also the $m$-valued first-order G\"odel
logic~$\G_m$.\footnote{See
\citet{Skvortsov2005} and \citet{GabbayShehtmanSkvortsov2009} for the mentioned
Kripke completeness results and \citet{BaazPreiningZach2007} for the
characterizations in terms of G\"odel truth value sets. ($\G_m$,
$\G_\Real$,and $\G_0$ are the only axiomatizable first-order G\"odel
logics.)}

\section{$\et$-Calculi for Intermediate Logics}
\label{sec:embedding}

Formulas and terms of the \et-calculus are defined by simultaneous
induction, allowing that if $A(x)$ is a formula already defined, then
$\meps x{A(x)}$ and $\mtau x{A(x)}$ are terms.  In $\meps x{A(x)}$ and
$\mtau x{A(x)}$ the variable~$x$ is bound. We call terms of the form
$\meps x{A(x)}$ \emph{\eps-terms} and those of the form $\mtau
x{A(x)}$, \emph{\tau-terms} (collectively: \emph{\et-terms}).

As usual, we consider \eps- and \tau-terms to be identical up to
renaming of bound variables, and define substitution of \et-terms into
formulas, as in $A(\meps{x}{A(x)})$, so that bound variables are
tacitly renamed to avoid clashes.

\begin{defn}
  A \emph{critical formula belonging to~$\meps{x}{A(x)}$}
  is any formula of the form $A(t) \lif A(\meps{x}{A(x)})$.
  
  A critical formula belonging to~$\mtau{x}{A(x)}$ is any formula
  of the form $A(\mtau{x}{A(x)}) \lif A(t)$.
\end{defn}

\begin{defn}\label{defn:etcalc} Suppose $\pl{\Lo}$ is an intermediate
propositional logic. An \emph{\et-proof} $\pi$ of~$B$ is a proof in
the elementary calculus of~$\Lo$ from critical formulas~$\Gamma$ of
the form
\begin{align*}
  A(t) & \lif A(\meps x A(x))\\
  A(\mtau x A(x)) & \lif A(t).
\end{align*}
We write $\proves{\etl{\pl{\Lo}}} B$ if such a $\pi$ exists, or
$\Gamma \vdash_{\etl{\Lo}}^\pi B$ when we want to identify the
critical formulas and the proof~$\pi$.

The \emph{pure \et-calculus}~$\etl{\Lo}$ of $\Lo$ is the set of
quantifier-free formulas that have \et-proofs. 
\end{defn}

We are interested in the relationships between intermediate predicate
logics $\ql{\Lo} + \Ax$ and the \et-calculus~$\etl{\Lo}$ of their
propositional fragment~$\pl{\Lo}$. Since the language of
$\etl{\pl{\Lo}}$ does not contain quantifiers, we must define a
translation of predicate formulas that do contain them into the
language of the \et-calculus.

\begin{defn}\label{defn:et-trans} The \emph{$\et$-translation}~$A^\et$
  of a formula~$A$ is defined as follows:
  \begin{align*}
    A^\et & = A \text{\ if $A$ is atomic}\\
    (A \land B)^\et & = A^\et \land B^\et & 
    (A \lor B)^\et & = A^\et \lor B^\et \\
    (A \lif B)^\et & = A^\et \lif B^\et & 
    (\lnot A)^\et & = \lnot A^\et \\
    (\exists x\, A(x))^\et & = A^\et(\meps x A(x)^\et) & 
    (\forall x\, A(x))^\et & = A^\et(\mtau x A(x)^\et)
  \end{align*}
\end{defn}

Again, substitution of \et-terms for variables must be understood
modulo renaming of bound variables so as to avoid clashes. Clearly, if
$A$ contains no quantifiers, then $A^\et = A$.

The point of the classical \eps-calculus is that it can
\emph{replace} quantifiers and quantifier inferences. And indeed, in
classical first-order logic, a first-order formula~$A$ is provable
iff its translation~$A^\eps$ is provable in the pure
\eps-calculus. The ``if'' direction is the content of the second
\eps-theorem, while the ``only if'' direction follows more simply by
translating derivations.

We defined the \et-calculus on the basis of a propositional
logic~$\pl{\Lo}$. It is also possible to define an ``extended''
\et-calculus by adding \et-terms and critical formulas to the full
first-order language including quantifiers, and then considering
proofs in~$\ql{\Lo} + \Ax$ from critical formulas.

\begin{defn}\label{defn:etxcalc}
A proof $\pi$ in $\etx{(\ql{\Lo}+\Ax)}$ of~$A$ is a sequence of formulas
of the \et-calculus (possibly containing quantifiers) ending in~$A$ in
which each formula is either an instance of a formula in~$\Lo$, an
instance of a schema in~$\Ax$, a standard quantifier axiom, a
critical formula, or follows from preceding formulas by modus ponens
or a quantifier rule. We write $\proves{\etx{(\ql{\Lo} + \Ax)}} A$ if
such a $\pi$ exists.

The \emph{extended \et-calculus}~$\etl{(\ql\Lo + \Ax)}$ of $\ql\Lo +
\Ax$ is the set of formulas that have proofs in $\etx{(\ql{\Lo} +
\Ax)}$. 
\end{defn}

If $A$ is quantifier-free and $\proves{\etl{\pl{\Lo}}} A$
then $\proves{\etx{(\ql{\Lo} + \Ax)}} A$. One may wonder, however, if
$\etx{(\ql{\Lo} + \Ax)}$ is stronger than $\etl{\pl{\Lo}}$ in the
sense that for some formulas~$A$, $\proves{\etx{(\ql{\Lo} + \Ax)}} A$
but not $\proves{\etl{\pl{\Lo}}} A^\et$. This is not so as long as the
\et-translations of the axioms~$\Ax$ of $\ql{\Lo} + \Ax$ are provable
in $\etl{\pl{\Lo}}$; then the extended \eps-calculus is conservative
over the pure \eps-calculus.

\begin{lem}\label{embed} Suppose $\ql{\Lo} + \Ax$ is an intermediate
  predicate logic, and for each quantifier axiom $B \in \Ax$,
  $\proves{\etl{\pl{\Lo}}} B^\et$.  If $\proves{\etx{(\ql{\Lo} +
  \Ax)}} A$, then $\proves{\etl{\pl{\Lo}}} A^\et$.
\end{lem}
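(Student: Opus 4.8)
The plan is to translate an $\etx{\ql{\Lo}}$-derivation of $A$ line by line under $(\cdot)^\et$ and to show, by induction on the length of the proof, that $\proves{\etl{\pl{\Lo}}} A_i^\et$ for every line $A_i$. First I would extend the translation of Definition~\ref{defn:et-trans} to formulas that already contain \et-terms, by letting $(\cdot)^\et$ act recursively and leave \et-terms in place except for translating any quantifiers occurring inside them, so that $(\meps{x}{A(x)})^\et = \meps{x}{A^\et(x)}$ and dually for $\tau$. I would then record two routine facts, each proved by a straightforward structural induction: (i) the translation commutes with substitution of terms for free variables, $(A(t))^\et = A^\et(t^\et)$; and (ii) the translation does not change the set of free variables, $\mathrm{FV}(A^\et) = \mathrm{FV}(A)$. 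I would also use that the elementary calculus underlying $\etl{\pl{\Lo}}$ is closed under uniform substitution of an \et-term for a free variable throughout a proof, since such a substitution carries propositional axiom instances to axiom instances, carries critical formulas to critical formulas, and preserves modus ponens.

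For the inductive step I would split on the justification of the line $A_i$. If $A_i$ is a substitution instance of a propositional axiom of $\pl{\Lo}$, then since $(\cdot)^\et$ commutes with the propositional connectives, $A_i^\et$ is again such an instance and so provable in $\etl{\pl{\Lo}}$. If $A_i$ is a quantifier axiom of $\ql{\Lo}$, then $\proves{\etl{\pl{\Lo}}} A_i^\et$ is exactly the hypothesis of the lemma (using (i) to pass to arbitrary instances); note that for the two standard quantifier axioms this requirement is automatic, as their translations $A^\et(\mtau{x}{A^\et(x)}) \lif A^\et(t^\et)$ and $A^\et(t^\et) \lif A^\et(\meps{x}{A^\et(x)})$ are themselves critical formulas, and the hypothesis is a genuine demand only for any additional first-order principles of $\ql{\Lo}$. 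If $A_i$ is a critical formula, then by (i) its translation is again a critical formula, e.g.\ $(A(t) \lif A(\meps{x}{A(x)}))^\et = A^\et(t^\et) \lif A^\et(\meps{x}{A^\et(x)})$, hence an axiom of $\etl{\pl{\Lo}}$. If $A_i$ follows by modus ponens from $A_j$ and $A_j \lif A_i$, then since $(A_j \lif A_i)^\et = A_j^\et \lif A_i^\et$, the induction hypothesis yields $A_i^\et$ by one application of modus ponens.

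The only case demanding real work is that of the quantifier rules, and this is where the eigenvariable condition does its job. Consider the $\forall$-rule passing from $B \lif A(x)$ to $B \lif \forall y\, A(y)$, with $x$ not free in the conclusion. The induction hypothesis supplies an $\etl{\pl{\Lo}}$-proof of $B^\et \lif A^\et(x)$, and by (ii) the eigenvariable condition gives $x \notin \mathrm{FV}(B^\et)$. Substituting the \et-term $\mtau{y}{A^\et(y)}$ for $x$ throughout this proof leaves $B^\et$ unchanged and turns $A^\et(x)$ into $A^\et(\mtau{y}{A^\et(y)})$, producing a proof of $B^\et \lif A^\et(\mtau{y}{A^\et(y)}) = (B \lif \forall y\, A(y))^\et$. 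The $\exists$-rule is symmetric: from $A^\et(x) \lif B^\et$ one substitutes $\meps{y}{A^\et(y)}$ for $x$ to obtain $(\exists y\, A(y) \lif B)^\et$. Thus the eigenvariable conditions are absorbed into substitutions of \et-terms, which is exactly the mechanism by which the \et-calculus replaces quantifier inferences.

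I expect the only delicate point to be the bookkeeping of substitution and bound-variable renaming in this last case, specifically verifying that the substituted \et-term captures no variable and that $A^\et(x)$ indeed becomes $A^\et(\mtau{y}{A^\et(y)})$; this is guaranteed by the standing convention that substitution of \et-terms is taken modulo renaming of bound variables. Notably, no use of excluded middle, nor any property of $\Lo$ beyond closure under substitution and modus ponens, enters the argument.
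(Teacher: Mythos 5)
Your proposal is correct and follows essentially the same route as the paper: induction on the length of the derivation, with axioms of $\ql{\Lo}$ handled by the hypothesis (quantifier axioms) or directly (propositional instances and critical formulas), and the quantifier rules absorbed by substituting $\mtau{x}{A(x)}$ resp.\ $\meps{x}{B(x)}$ for the eigenvariable throughout the subderivation. You spell out more of the bookkeeping (extending $(\cdot)^\et$ to \et-term-containing formulas, commutation with substitution, preservation of free variables) that the paper leaves implicit, but the core argument is identical.
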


\begin{proof} By standard proof transformations we may assume that the
proof~$\pi$ in $\etx{(\ql{\Lo} + \Ax)}$ is such that every formula is
used as a premise of at most one modus ponens or quantifier inference,
and that the eigenvariables of all quantifier inferences are distinct
(the proof is \emph{regular}). The proof then proceeds by induction on
the length of~$\pi$.

Any formula $B$ in~$\pi$ that is not the conclusion of an inference is
either in~$\Lo$, a critical formula, in~$\Ax$, or a standard
quantifier axiom. Then $B^\et$ is also either an axiom in $\pl{\Lo}$,
a critical formula (as can easily be seen from the definition of the
\et-translation), or, if $B \in \Ax$ then
$\proves{\etl{\pl{\Lo}}}B^\et$ (by hypothesis). If $B$ is a standard
quantifier axiom, its \et-translation is a critical formula:
\begin{align*}
  [A(t) \lif \exists x\,A(x)]^\et &= A^\et(t) \lif A^\et(\meps x A^\et(x))\\
  [\forall x\,A(x) \lif A(t)]^\et &= A^\et(\mtau x A^\et(x)) \lif A^\et(t)
\end{align*}

If $B \lif A(x)$ is derivable (where the eigenvariable~$x$ is not free
in $B$), then so is $B \lif A(\mtau{x}{A(x)})$, by substituting
$\mtau{x}{A(x)}$ everywhere $x$ appears free in the part of~$\pi$
leading to $B \lif A(x)$, and renaming bound variables to avoid
clashes. Thus, if $B$ is the conclusion of a quantifier inference,
$\proves{\etl{\pl{\Lo}}} B^\et$. Similarly, if $B(x) \lif A$ is
derivable, so is $B(\meps{x}{B(x)}) \lif A$.
(Cf.~\cite[Lemma~7]{MoserZach2006}.)
\end{proof}

As we'll see in Section~\ref{sec:quant-shift}, the quantifier axioms
of the intermediate predicate logics considered in the preceding
section satisfy the condition that the \et-translations of their
additional quantifier axioms are derivable from critical formulas
alone (i.e., already in~$\etl{\Lo}$).

\begin{cor}
  If $\ql{\Lo} + \Ax$ satisfies the conditions of Lemma~\ref{embed},
  and $A$ contains no quantifiers, then $\proves{\etx{(\ql{\Lo}+\Ax)}}
  A$ only if $\proves{\etl{\pl{\Lo}}} A$.
\end{cor}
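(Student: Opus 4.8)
The plan is to derive the corollary directly from Lemma~\ref{embed} together with the trivial observation about translating proofs in the other direction. Recall what the corollary asserts: for quantifier-free~$A$, we have $\etx{\ql{\Lo}} \vdash A$ iff $\etl{\pl{\Lo}} \vdash A$. I would first note that when $A$ is quantifier-free, its \et-translation is just~$A$ itself, i.e.\ $A^\et \equiv A$; this follows by a one-line induction on the structure of~$A$ using Definition~\ref{defn:et-trans}, since the only clauses that change a formula are those for $\exists$ and $\forall$, which cannot occur in a quantifier-free formula. This identification is what lets the two directions below close up to the same formula rather than to a translation of it.

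For the right-to-left direction, I would invoke the remark made just before Lemma~\ref{embed}: if $A$ is quantifier-free and $\proves{\etl{\pl{\Lo}}} A$, then $\proves{\etx{\ql{\Lo}}} A$. This is immediate because every axiom and inference available in $\etl{\pl{\Lo}}$ is also available in $\etx{\ql{\Lo}}$ (the latter merely adds quantifiers and the quantifier rules to the language and rules of the former), so the very same proof~$\pi$ witnesses provability in the extended calculus. No translation is needed here since $A$ already lives in the quantifier-free language.

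For the left-to-right direction, I would apply Lemma~\ref{embed} under the standing hypothesis that $\ql{\Lo}$ satisfies its conditions (namely that $\proves{\etl{\pl{\Lo}}} B^\et$ for each quantifier axiom~$B$). The lemma gives: if $\proves{\etx{\ql{\Lo}}} A$, then $\proves{\etl{\pl{\Lo}}} A^\et$. Combining this with the first step, $A^\et \equiv A$, we conclude $\proves{\etl{\pl{\Lo}}} A$, as required.

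There is no substantive obstacle here: the corollary is a direct packaging of Lemma~\ref{embed} with the easy converse, and the only content is the observation that the \et-translation is the identity on quantifier-free formulas. If I wanted to be careful about anything, it would be to confirm that the quantifier-free fragment of the extended calculus~$\etx{\ql{\Lo}}$ does not allow quantifiers to appear transiently inside a proof of a quantifier-free~$A$ in a way that Lemma~\ref{embed} fails to handle—but this is already absorbed into the lemma, whose inductive argument processes arbitrary derivations (including those passing through quantified intermediate formulas) and returns a proof of the translated endpoint. Hence the two directions together yield the stated equivalence.
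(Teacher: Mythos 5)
Your proposal is correct and follows exactly the paper's route: the paper's entire proof is the observation that $A^\et \equiv A$ for quantifier-free~$A$, with the two directions supplied by Lemma~\ref{embed} and the trivial inclusion of $\etl{\pl{\Lo}}$ in $\etx{\ql{\Lo}}$ noted just before that lemma. You have merely spelled out what the paper leaves implicit.
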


\begin{proof}
  If $A$ contains no quantifiers, then $A^\et \equiv A$.
\end{proof}

\begin{prop}
  For any intermediate predicate logic $\ql{\Lo} + \Ax$,
  \begin{align*}
  & \proves{\etx{(\ql{\Lo} + \Ax)}} \forall x\, A(x) \liff A(\mtau x{A(x)}) \text{ and}\\
  &\proves{\etx{(\ql{\Lo}+\Ax)}} \exists x\, A(x) \liff A(\meps x{A(x)}).
  \end{align*}
\end{prop}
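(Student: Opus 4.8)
The plan is to prove each biconditional by establishing its two implications separately and then conjoining them, using that $P \liff Q$ abbreviates $(P \lif Q) \land (Q \lif P)$ and that the propositional reasoning needed to combine two implications into a biconditional is already available in $\ql{\IL} \subseteq \ql{\Lo}$. In each case one implication will be a direct instance of a quantifier axiom of $\ql{\Lo}$, while the reverse implication will come from a single critical formula together with one application of a quantifier rule.

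Consider first the $\tau$-equivalence. The direction $\forall x\,A(x) \lif A(\mtau x{A(x)})$ is nothing but the universal axiom $\forall x\,A(x) \lif A(t)$ of $\ql{\Lo}$ instantiated with the term $t := \mtau x{A(x)}$, which is a legitimate term of the extended language. For the converse $A(\mtau x{A(x)}) \lif \forall x\,A(x)$, I would first choose a variable $z$ not occurring free in $A$; then $z$ is also not free in the term $\mtau x{A(x)}$, nor in the formula $A(\mtau x{A(x)})$. The critical formula belonging to $\mtau x{A(x)}$, instantiated with $t := z$, is $A(\mtau x{A(x)}) \lif A(z)$. Applying the universal quantifier rule with $z$ as eigenvariable turns this into $A(\mtau x{A(x)}) \lif \forall z\,A(z)$, which is $A(\mtau x{A(x)}) \lif \forall x\,A(x)$ up to renaming of the bound variable.

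The $\eps$-equivalence is dual. Here $A(\meps x{A(x)}) \lif \exists x\,A(x)$ is the existential axiom $A(t) \lif \exists x\,A(x)$ with $t := \meps x{A(x)}$, and for $\exists x\,A(x) \lif A(\meps x{A(x)})$ I would again pick $z$ fresh, instantiate the critical formula for $\meps x{A(x)}$ as $A(z) \lif A(\meps x{A(x)})$, and apply the existential quantifier rule with eigenvariable $z$ to obtain $\exists z\,A(z) \lif A(\meps x{A(x)})$, i.e.\ $\exists x\,A(x) \lif A(\meps x{A(x)})$.

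Since everything used here---the two quantifier axioms, the two quantifier rules, and one critical formula for each $\et$-term---is available in $\etx{\ql{\Lo}}$ for every intermediate predicate logic, the argument is uniform and requires no special assumptions (in particular, it does not depend on the hypotheses of Lemma~\ref{embed}). The only point that needs genuine care, and the nearest thing to an obstacle, is the eigenvariable condition in the two quantifier-rule steps: one must verify that the chosen eigenvariable $z$ is not free in the component of the implication that is held fixed (the antecedent $A(\mtau x{A(x)})$ in the $\tau$-case, the succedent $A(\meps x{A(x)})$ in the $\eps$-case). Because $\et$-terms may carry free parameters, this is exactly where freshness of $z$ relative to the $\et$-term is used; since such a $z$ can always be found, the condition is met and the derivation goes through.
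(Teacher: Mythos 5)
Your proof is correct and follows essentially the same route as the paper's: one direction of each biconditional is an instance of the corresponding quantifier axiom with the \et-term as the instantiating term, and the converse is obtained from a critical formula by the corresponding quantifier rule. The only cosmetic difference is that the paper uses $x$ itself as the eigenvariable (permissible since $x$ is bound in $\meps x A(x)$ and hence not free in $A(\meps x A(x))$), whereas you introduce a fresh variable $z$; both satisfy the eigenvariable condition.
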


\begin{proof}
  In each case, one direction is an instance of the corresponding
  quantifier axiom, and the other direction follows from a critical
  formula by the corresponding quantifier rule.  For instance,
  $A(\meps x{A(x)}) \lif \exists x\, A(x)$ is a standard quantifier
  axiom, and from the critical formula $A(x) \lif A(\meps xA(x))$ we
  get $\exists x\, A(x) \lif A(\meps x{A(x)})$ by the $\exists$-rule,
  since $x$ is not free in $A(\meps xA(x))$.
\end{proof}

\begin{prop}\label{provequiv}
  If $\ql{\Lo}+\Ax$ satisfies the conditions of Lemma~\ref{embed}, then
  $\etx{(\ql{\Lo}+\Ax)} \vdash A \liff A^\et$.
\end{prop}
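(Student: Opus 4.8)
The plan is to argue by induction on the complexity of~$A$, reducing each quantifier to its \et-term by means of the preceding proposition and otherwise relying on the fact that provable equivalence is a congruence in any intermediate logic. Throughout, I state the induction hypothesis uniformly for formulas together with their free variables.

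For the base case, if $A$ is atomic then $A^\et \equiv A$ by Definition~\ref{defn:et-trans}, so $A \liff A^\et$ is just the intuitionistic theorem $A \liff A$. For the propositional connectives, suppose $A$ is $B \land C$, $B \lor C$, $B \lif C$, or $\lnot B$. The induction hypothesis supplies $\etx{\ql{\Lo}} \vdash B \liff B^\et$ and, where applicable, $\etx{\ql{\Lo}} \vdash C \liff C^\et$. Since $(B \land C)^\et = B^\et \land C^\et$ and similarly for the other connectives, and since $\liff$ is a congruence with respect to $\land$, $\lor$, $\lif$, and $\lnot$ already in~\IL, the desired equivalence follows by combining the hypotheses; no critical formulas are needed at this step.

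The quantifier cases are where the real work lies. Suppose $A$ is $\exists x\, B(x)$, so that $A^\et = B^\et(\meps x B^\et(x))$. I would proceed in three steps. First, the induction hypothesis gives $\etx{\ql{\Lo}} \vdash B(x) \liff B^\et(x)$. Second, I would lift this through the quantifier to obtain $\etx{\ql{\Lo}} \vdash \exists x\, B(x) \liff \exists x\, B^\et(x)$; this uses that provable equivalence is preserved by~$\exists$, which follows from the quantifier axiom $B^\et(x) \lif \exists x\, B^\et(x)$ together with the $\exists$-rule, the eigenvariable condition being met because $x$ is not free in $\exists x\, B^\et(x)$. Third, the preceding proposition, applied to the formula~$B^\et$, yields $\etx{\ql{\Lo}} \vdash \exists x\, B^\et(x) \liff B^\et(\meps x B^\et(x))$. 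Chaining these three equivalences gives $\etx{\ql{\Lo}} \vdash \exists x\, B(x) \liff A^\et$. The universal case is dual, using $\mtau$ and the $\forall$-rule in place of $\meps$ and the $\exists$-rule.

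The main obstacle is the bookkeeping in the quantifier step: one must verify that the congruence of $\liff$ under the quantifiers respects the eigenvariable conditions, and keep track of the tacit renaming of bound variables when \et-terms are formed and substituted into~$B^\et$. Once these conventions are fixed, the three-step chaining in the quantifier case is routine, and the propositional cases are immediate.
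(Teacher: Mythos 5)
Your proof is correct and follows essentially the same route as the paper: induction on the complexity of~$A$, using the congruence of provable equivalence (which the paper packages as the admissible substitution rule $B \liff C \vdash D(B) \liff D(C)$ in $\ql{\IL}$) for the connective and quantifier cases, and the preceding proposition to replace $\exists x\, B^\et(x)$ by $B^\et(\meps x B^\et(x))$ and dually for~$\forall$. The paper states this in one sentence; your version simply spells out the details.
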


\begin{proof}
Since $\ql{\Lo}+\Ax$ includes $\ql{\IL}$, the substitution rule $B \liff C
\vdash D(B) \liff D(C)$ is admissible.  The result follows by
induction on complexity of~$A$ and the previous proposition.
\end{proof}

\section{Critical Formulas and Quantifier Shifts}
\label{sec:quant-shift}

We will show later (Theorem~\ref{thm:conservative}) that any
\et-calculus for an intermediate logic~$\Lo$ is conservative
over~$\Lo$. It is well-known that the \et-calculus over intuitionistic
logic is not conservative over intuitionistic \emph{predicate} logic.
We'll show now specifically that for any intermediate logic
$\pl{\Lo}$, the \et-translations of all classically valid quantifier
shift principles are provable from critical formulas.

\begin{table}
  \hrule\vspace{2pt}\hrule
\begin{align*}
  &&& C(x) && t_1 && t_2\\[.5ex] \hline\\[-1.5ex]
  \forall x(A(x) \lor B)  & \lif (\forall x\, A(x) \lor B) &
  & A(x) \lor B && \mtau x C(x) && \mtau x A(x) \tag{\CD}\\
  (\exists x\, A(x) \lor B)  & \lif \exists x(A(x) \lor B) &
  & A(x) \lor B && \meps x A(x) && \meps x C(x)\\
  \forall x(A(x) \land B)  &\lif (\forall x\, A(x) \land B) &
  & A(x) \land B && \mtau x C(x) && \mtau x A(x)\\
  (\exists x\, A(x) \land B)  &\lif \exists x(A(x) \land B) &
  & A(x) \land B && \meps x A(x) && \meps x C(x)\\
  (B \lif \exists x\, A(x)) & \lif \exists x(B \lif A(x)) &
  & B \lif A(x) && \meps x A(x) &&\meps x C(x) \tag{\Eout}\\
  \forall x(B \lif A(x)) & \lif (B \lif \forall x\,A(x)) &
  & B \lif A(x) && \mtau x C(x) && \mtau x A(x)\\
  (\forall x\, A(x) \lif B) & \lif \exists x(A(x) \lif B) &
  & A(x) \lif B && \mtau x A(x) && \meps x C(x) \tag{\Aout}\\
  \forall x(A(x) \lif B) & \lif (\exists x\, A(x) \lif B) &
  & A(x) \lif B && \mtau x C(x) && \meps x A(x)\\
\forall x \lnot\lnot A(x) & \lif \lnot\lnot \forall x\, A(x) &
  & \lnot\lnot A(x) && \mtau x \lnot\lnot A(x) && \mtau x A(x) \tag{\Kur}
\end{align*}
\hrule\vspace{2pt}\hrule

\caption{Quantifier shift formulas whose \et-translations are
  critical formulas. In each case, $x$ is not free in~$B$, and the
  \et-translation of the quantifier shift formula on the left is
  $C(t_1) \lif C(t_2)$.}
\label{qstable}
\end{table}

Quantifier shift formulas divide into two kinds. On the one hand, we
have conditionals the \et-translations of which are critical
formulas, and are therefore provable in $\etl{\pl{\Lo}}$ (see
Table~\ref{qstable}). On the other hand, we have formulas provable
from critical formulas together with some propositional principles,
all of which are intuitionistically valid and hence provable in all
intermediate logics. For instance, to obtain the \et-translation of
\[(\forall x\, A(x) \lor B)   \lif \forall x(A(x) \lor B), \] 
take $A_1 = A(\mtau xA(x))$ and $A_2 =
A(\mtau x{(A(x) \lor B)})$. Then $A_1 \lif A_2$ is a critical formula,
viz.,
\[
A(\mtau xA(x)) \lif A(\mtau x(A(x) \lor B)).
\]
Apply modus ponens to it and the principle 
\[
(A_1 \lif A_2) \lif ((A_1 \lor B) \lif (A_2 \lor B)).
\]
This same pattern works in all cases, the required critical formulas
$A_1 \lif A_2$ and propositional principles are given in
Table~\ref{qsproof}.

\begin{table}
  \hrule\vspace{2pt}\hrule
\[
\begin{array}{r@{}ll}
  (\forall x\,A(x) \lor B) & {} \lif \forall x(A(x) \lor B) &  
 A_1 = A(\mtau xA(x)), A_2 = A(\mtau x(A \lor B)) \\
  \exists x(A(x) \lor B) & {} \lif (\exists x A(x) \lor B) &
 A_1 = A(\meps x (A(x) \lor B)), A_2 = A(\meps xA(x))\\
(A_1 \lor B) & {} \lif (A_2 \lor B) & (A_1 \lif A_2) \lif ((A_1 \lor B) \lif (A_2 \lor B)) \\ \hline\\[-1ex]
  \forall x(A(x) \land B) & {} \lif (\forall x\,A(x) \land B) &  
  A_1 = A(\mtau xA(x)), A_2 = A(\mtau x (A(x) \land B)) \\
  \exists x(A(x) \land B)  & {} \lif (\exists x\, A(x) \land B) &
A_1 = A(\meps x (A(x) \land B)), A_2 = A(\meps xA(x))\\
(A_1 \land B) & {} \lif (A_2 \land B)
& (A_1 \lif A_2) \lif ((A_1 \land B) \lif (A_2 \land B)) \\ \hline\\[-1ex]
\exists x(B \lif A(x)) & {} \lif (B \lif \exists x\,A(x))
& A_1 = A(\meps x (B \lif A(x))), A_2 = A(\meps xA(x))\\
(B \lif \forall x\, A(x)) & {} \lif \forall x(B \lif A(x))
& A_1 = A(\mtau xA(x)), A_2 = A(\mtau x (B \lif A(x))) \\
(B \lif A_1) & {} \lif (B \lif A_2)  & (A_1 \lif A_2) \lif ((B \lif A_1) \lif (B \lif A_2)) \\ \hline\\[-1ex]
\exists x(A(x) \lif B) & {} \lif (\forall x\, A(x) \lif B)
& A_1 = A(\mtau xA(x)), A_2 = A(\meps x (A(x) \lif B)) \\
(\exists x\, A(x) \lif B) & {} \lif \forall x(A(x) \lif B)
& A_1 = A(\mtau x (A(x) \lif B)), A_2 = A(\meps xA(x))\\
(A_1 \lif B) & {} \lif (A_2 \lif B) & (A_1 \lif A_2) \lif ((A_2 \lif B) \lif (A_1 \lif B))
\end{array}\]
\hrule\vspace{2pt}\hrule\vspace*{2ex}

\caption{Proofs of \et-translations of quantifier shift formulas.
In each case, $x$ is not free in~$B$, $A_1 \lif A_2$ is a critical
formula, the \et-translation of the formula is given on the left. The
propositional principle on the right is provable in intuitionistic
logic, and the \et-translation of the quantifier shift formula follows
by one application of modus ponens.}
\label{qsproof}
\end{table}

The most interesting quantifier shift formulas here are $\CD$,
$\Aout$, and $\Eout$, since they are not intuitionistically valid.  By
contrast, we have:

\begin{prop}\label{prop:qsprovet}
  If $\Lo$ is an intermediate propositional logic, then:
  \begin{enumerate}
    \item $K^\et$, $\CD^\et$, $(\Eout)^\et$, and
  $(\Aout)^\et$ are provable in $\etl{\pl{\Lo}}$.

  \item $K$, $\CD$, $\Eout$, and $\Aout$ are provable in
  $\etx{(\ql{\Lo}+\Ax)}$.
  \end{enumerate}
\end{prop}

\begin{proof}
(1) They are critical formulas; see Table~\ref{qstable}.

(2) Follows from Proposition~\ref{provequiv}.
\end{proof}

The second \eps-theorem states that if $A^\eps$ is provable in the
pure \eps-calculus, then $A$ is provable in classical predicate logic.
The second \eps-theorem fails for any intermediate predicate logic
$\ql\Lo +\Ax$, in which $A^\et$ is provable in $\etl\Lo$ but $A$ is
not provable in $\ql\Lo +\Ax$, e.g., when $\ql\Lo +\Ax$ does not prove
one of $K$, $\CD$, $\Eout$, or~$\Aout$.

Note that the only intuitionistically invalid De Morgan rule for
quantifiers, 
\begin{equation*}
  \lnot \forall x\,A(x) \lif \exists x\,\lnot A(x), \tag{$Q$}
\end{equation*} 
is a special case of $\Aout$, taking $\bot$ for~$B$; $Q^\et$ is a
critical formula. The \et-translation of double negation shift
$\Kur^\et$ is 
\[\lnot\lnot A(\mtau x \lnot\lnot A(x)) \lif \lnot\lnot A(\mtau x A(x))\] 
and is also a critical formula.

In classical first-order logic, both the addition of \eps-operators
and critical formulas and the replacement of quantifiers by
\eps-operators is conservative. The previous results show that for
extensions of first-order intuitionistic logic, this is not the case:
intuitionistically invalid quantified formulas (or their
\et-translations) become provable. However, these quantifier shifts
\emph{are} provable in some intermediate logics, e.g., in some G\"odel
logics.

We might think of \et-terms semantically as terms for objects which
serve the role of generics taking on the role of quantifiers, and
indeed in classical logic this connection is very close. Because of
the validity of 
\begin{align*}
  & \exists x(\exists y\,A(y) \lif A(x)) \tag{\Wel}\\
  & \exists x(A(x) \lif \forall y\,A(y)) \tag{\DWel}
\end{align*}
in classical logic, there always is an object~$x$ which behaves as an
\eps-term ($A(x)$ holds iff $\exists x\,A(x)$ holds), and an
object~$x$ which behaves as a \tau-term (i.e., $A(x)$ holds iff
$\forall y\,A(y)$ holds). One might expect then that $\Wel$ and
$\DWel$, when added to $\ql{\IL}$, have the same effect as adding
critical formulas, i.e., that all quantifier shifts become provable.
Note that $\Wel$ and $\DWel$ are intuitionistically equivalent to
\begin{align*}
  & \exists x\forall y(A(y) \lif A(x)) \tag{$\Wel'$}\\
  & \exists x\forall y(A(x) \lif A(y)). \tag{$\DWel'$}
\end{align*}
As is easily checked, $\proves{\ql{\IL}} \Wel \liff \Eout$ and
$\proves{\ql{\IL}} \DWel \liff \Aout$, and even $\proves{\ql{\IL}}
\DWel \lif \CD$. However, $\ql{\IL} + \Wel \nvdash \CD$.\footnote{See
p.~694 of \citet{Skvortsov2006}.}

\section{$\etl{\pl{\Lo}}$ is Conservative over $\pl{\Lo}$}
\label{sec:conservative}

The classical \eps-calculus is conservative over propositional logic.
Work by \citet{Bell1993a} and \citet{DeVidi1995} shows that, however,
the addition of critical formulas to intuitionistic logic results in
intuitionistically invalid \emph{propositional} formulas becoming
provable in certain simple theories. These results require the
presence of identity axioms.  One may wonder if these results can be
strengthened to the pure logic and the \et-calculus alone. The
following proposition shows that this is not the case. The addition of
critical formulas to intermediate logics alone does not have any
effects on the propositional level.

\begin{defn}
  The \emph{shadow} $A^s$ of a formula is defined as follows:
  \begin{align*}
    P(t_1, \ldots, t_n)^s & = X_P\\
    (t_1 = t_2)^s & = \top\\
    (A \land B)^s & = A^s \land B^s & (A \lor B)^s & = A^s \lor B^s \\
    (A \lif B)^s & = A^s \lif B^s & (\lnot A)^s & = \lnot A^s \\
    (\exists x\, A(x))^s & = A(x)^s & (\forall x\, A(x))^s & = A(x)^s
  \end{align*} 
  where $X_P$ is a propositional variable and $\top$ is any theorem
  of~$\pl{\Lo}$.

  The shadow of a proof $\pi = A_1$, \dots, $A_n$ is $A_1^s$, \dots,
  $A_n^s$.

  A first-order intermediate logic $\ql{\Lo} + \Ax$ (over a
  propositional base logic~$\pl{\Lo}$) is \emph{preserved under
  shadow} if $\proves{\pl{\Lo}} B^s$ for all quantifier axioms $B \in
  \Ax$.
\end{defn}

The shadow of a formula is a propositional formula obtained by
disregarding all first-order structure. If an intermediate predicate
logic $\ql{\Lo} + \Ax$ is preserved under shadow, the shadows of its
theorems are already valid in~$\pl{\Lo}$.

\begin{prop}\label{shadow} Suppose $\ql{\Lo}+\Ax$ is preserved under
  shadow. If $A_1, \ldots, A_n \vdash_{\etx{(\ql{\Lo} + \Ax)}} B$,
  then $A_1^s, \ldots, A_n^s \vdash_{\pl{\Lo}} B^s$. This also holds
  if identity axioms are present.
\end{prop}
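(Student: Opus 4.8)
The plan is to induct on the length of the \et-proof $\pi$ witnessing $A_1, \ldots, A_n \vdash_{\etl{\ql{\Lo}}} B$, establishing the slightly stronger statement that $A_1^s, \ldots, A_n^s \vdash_{\pl{\Lo}} C^s$ for \emph{every} line $C$ of~$\pi$; the case $C \equiv B$ is then the assertion of the proposition. The one fact I would isolate before running the induction is that the shadow is blind to the arguments of predicates, and hence to term substitution: for every formula~$A$ and every term~$t$ (in particular, every \et-term), $A(t)^s = A(x)^s$. This is a short sub-induction on~$A$, the base case being that an atom $P(s_1, \ldots, s_k)$ has shadow $X_P$ irrespective of its arguments while an identity has shadow~$\top$; the connective clauses commute with~$\cdot^s$, and the quantifier clauses simply discard the binder. (Renaming of bound variables during substitution is harmless, since $\cdot^s$ erases every binder.)

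With this observation in hand the induction is almost entirely bookkeeping. If $C$ is a substitution instance of a propositional formula $\phi \in \pl{\Lo}$, then, because $\cdot^s$ commutes with the connectives, $C^s$ is again a propositional substitution instance of~$\phi$ (each propositional letter being replaced by the shadow of the formula substituted for it); since $\pl{\Lo}$ is closed under substitution, $\proves{\pl{\Lo}} C^s$. If $C$ is a quantifier axiom of $\ql{\Lo}$, then $\proves{\pl{\Lo}} C^s$ by clause~(1) of preservation under shadow. If $C$ is one of the $A_j$, then $C^s = A_j^s$ is literally among the premises. If $C$ follows by modus ponens from $D$ and $D \lif C$, the induction hypothesis gives $A_1^s, \ldots, A_n^s \vdash_{\pl{\Lo}} D^s$ and $A_1^s, \ldots, A_n^s \vdash_{\pl{\Lo}} (D \lif C)^s$, and since $(D \lif C)^s = D^s \lif C^s$ one more modus ponens yields $C^s$. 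If $C$ is the conclusion of a quantifier rule of $\ql{\Lo}$, clause~(2) of preservation under shadow delivers $C^s$ from the shadows of the premises; indeed, as noted just before the proposition, premise and conclusion have identical shadows.

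The only case that touches the \et-structure is that of a critical formula, and here the opening observation does all the work. If $C$ is the \eps-critical formula $A(t) \lif A(\meps x A(x))$, then $A(t)^s = A(x)^s = A(\meps x A(x))^s$, so $C^s$ is $A(x)^s \lif A(x)^s$, an instance of the schema $p \lif p$ and hence provable in $\pl{\Lo}$; the \tau-critical formula $A(\mtau x A(x)) \lif A(t)$ collapses in exactly the same way. For the final clause of the proposition, the two identity axioms are treated as already indicated in the text: $(t = t)^s = \top$ is a theorem, and the shadow of $t_1 = t_2 \lif (A(t_1) \lif A(t_2))$ is $\top \lif (A(x)^s \lif A(x)^s)$, again provable in $\IL$ with identity and so in~$\pl{\Lo}$. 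The single substantive ingredient of the whole argument is the substitution-invariance $A(t)^s = A(x)^s$: once it is available, every critical formula trivializes to an instance of $p \lif p$ and the remaining cases are precisely the ones guaranteed by the definition of preservation under shadow, so I do not expect any genuine obstacle beyond getting this observation exactly right.
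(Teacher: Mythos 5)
Your proof is correct and follows essentially the same route as the paper's: the paper's own argument is a one-line remark that the shadows of critical formulas and identity axioms are provable, relying on exactly the substitution-invariance $A(t)^s \equiv A(x)^s$ you isolate (which the paper notes just before the proposition) and on the definition of preservation under shadow to handle the quantifier axioms and rules. You have simply made the implicit induction on proof length explicit; there is no gap.
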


\begin{proof}
Consider a derivation~$\pi$ in $\etx{(\ql\Lo+\Ax)}$ of $B$ from $A_1$,
\dots,~$A_n$, and a formula~$C$ in~$\pi$ not a conclusion of an
inference, and not among $A_1$, \dots,~$A_n$. If $C \in \Lo$, then
also $C^s \in \Lo$.  If $C$ is a critical formula, then $C^s$ is of
the form $A \lif A$. If $C$ is a standard quantifier axiom, we have
$(\forall x\, A \lif A(t))^s \equiv (A(t) \lif \exists x\, A(x))^s
\equiv A(x)^s \lif A(x)^s$, which again is in~$\Lo$. (Clearly, $A(t)^s
\equiv A(x)^s$.) 
  
If $C$ is the conclusion of modus ponens from premises $A$ and $A \lif
C$, then $C^s$ follows from $A^s$ and $(A \lif C)^s$ by modus ponens.
If $C$ is the conclusion of a quantifier rule, the shadows of premise
and conclusion are identical, e.g.,
  \[
  (B \lif A(x))^s \equiv B^s \lif A(x)^s \equiv (B \lif \forall x\, A(x))^s
  \]
Thus we have shown that $\pi^s$ is a derivation of $B^s$ from $A_1^s$,
\dots, $A_n^s$ in~$\Lo$.

This still holds if identity is present, as the shadows of identity
axioms are: $(t = t)^s \equiv \top$ and
  \[
  (t_1 = t_2 \lif (A(t_1) \lif A(t_2)))^s \equiv \top \lif (A(t_1)^s \lif A(t_1)^s)
  \]
since $A(t_1)^s \equiv A(t_2)^s$. Both are provable in~\IL{} and thus
in~$\Lo$.
\end{proof}

All intermediate predicate logics mentioned above are preserved under
shadow. They are axiomatized by various quantifier shift principles.
As we have seen in the preceding section, the \et-translations of all
such quantifier shift principles become provable in the corresponding
\et-calculus. However, the shadow of such a quantifier shift principle
is a formula of the form $B \lif B$. As a consequence, we have the
following conservativity result for all intermediate \et-calculi:

\begin{thm}\label{thm:conservative} If $\ql{\Lo} + \Ax$ is preserved
  under shadow, then $\etx{(\ql{\Lo}+\Ax)}$ is conservative over
  $\pl{\Lo}$ for propositional formulas. In particular, no new
  propositional formulas become provable in~$\etl{\pl{\Lo}}$ by the
  addition of critical formulas to any intermediate logic~$\pl{\Lo}$,
  including intuitionistic logic itself.
\end{thm}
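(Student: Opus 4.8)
The plan is to reduce the statement entirely to Proposition~\ref{shadow}, so that almost no new work is required. Suppose $A$ is a propositional formula and $\proves{\etx{\ql{\Lo}}} A$. By the definition of the extended \et-calculus this means there is a finite set $\Gamma$ of critical formulas (together with identity axioms, if identity is present) such that $\Gamma \vdash_{\etl{\ql{\Lo}}} A$. Since by hypothesis $\ql{\Lo}$ extends $\pl{\Lo}$ and is preserved under shadow, Proposition~\ref{shadow} applies to this derivation and yields $\Gamma^s \vdash_{\pl{\Lo}} A^s$, where $\Gamma^s$ collects the shadows of the formulas in~$\Gamma$.

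The next step is to observe that every formula in $\Gamma^s$ is outright provable in~$\pl{\Lo}$, so that all the hypotheses can be discharged. This is exactly the computation already recorded in the proof of Proposition~\ref{shadow}: the shadow of an \eps-critical formula $A(t) \lif A(\meps x A(x))$ is $A(x)^s \lif A(x)^s$, because the shadow erases term arguments and hence identifies $A(t)^s$, $A(\meps x A(x))^s$ and $A(x)^s$; the \tau-critical formulas and the identity axioms behave the same way. Each such shadow has the form $B \lif B$ (or $\top$, or $\top \lif (B \lif B)$) and is therefore provable in~\IL, hence in~$\pl{\Lo}$ since $\IL \subseteq \pl{\Lo}$. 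Replacing each hypothesis in $\Gamma^s$ by its $\pl{\Lo}$-proof and appending the derivation of $A^s$ gives $\proves{\pl{\Lo}} A^s$.

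Finally I would note that the shadow acts as the identity on propositional formulas up to a harmless relabeling of atoms. Identifying each propositional variable of $\pl{\Lo}$ with the corresponding atom of the first-order language, a propositional $A$ contains no quantifiers and no proper subterms for the shadow to erase, so the clauses for the connectives leave the structure of $A$ untouched while each atom is merely renamed to its $X_P$. Thus $A^s$ is $A$ up to a bijective renaming of propositional variables, and closure of $\pl{\Lo}$ under substitution turns $\proves{\pl{\Lo}} A^s$ into $\proves{\pl{\Lo}} A$; specializing to $\pl{\Lo} = \IL$ yields the final clause about intuitionistic logic. There is no genuine obstacle here—as the paper advertises, the argument is short once Proposition~\ref{shadow} is in hand—and the only delicate points are bookkeeping: confirming that every shadow in $\Gamma^s$ is intuitionistically valid so that the side hypotheses truly vanish, and pinning down the identification of propositional variables with atoms precisely enough that $A$ and $A^s$ agree up to renaming. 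Both are immediate from the defining clauses of the shadow.
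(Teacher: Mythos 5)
Your proposal is correct and follows exactly the route the paper takes: the theorem is stated there as an immediate consequence of Proposition~\ref{shadow}, whose one-line proof is precisely your middle paragraph (the shadows of critical formulas and identity axioms are intuitionistically provable, so the hypotheses discharge). Your closing observation that the shadow acts as the identity on propositional formulas up to renaming of atoms is the only detail the paper leaves tacit, and you handle it correctly.
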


\citet{Bell1993a} claimed that in the extended intuitionistic
\eps-calculus for $\ql{\IL}$ with identity, we have $D
\vdash_{\etx{\ql{\IL}}} M$, where $M$ is
\begin{equation*}
 \lnot(B \land C) \lif (\lnot B \lor \lnot C) \tag{$M$}
\end{equation*}
and $D$ is $\forall x(x = a \lor \lnot x =a)$. $M$ is an
intuitionistically invalid direction of De~Morgan's laws. Since the
shadow~$D^s$ of $D$ is the intuitionistically valid formula $\top
\lor \lnot \top$, this seems to contradict Lemma~\ref{shadow}.
The proof starts by asserting that 
\[ 
  \forall x\,[(x = a \land B) \lor (x \neq a \land C)]  \lif (B
  \land C)
\]
is provable in $\ql{\IL}$ with identity. This is false, however, as
the formula is not true in any one-element model when $B$~is true and
$C$~is false. Theorem~7 of \citealt{DeVidi1995} fails for the same
reason. The results are correct with the additional assumption $a \neq
b$.\footnote{Bell provides another proof of $M$ in intuitionistic
\eps-calculus which explicitly requires, in addition to~$D$, the assumption $a \neq b$. \citet{DeVidi1995} shows that in the
intuitionistic \et-calculus, $D \land a \neq b$ derives~\LIN. (Note
that also $\proves{\LC} M$.) However, since the shadow of $a \neq b$
is $\lnot \top$, these proofs do not conflict with our
Lemma~\ref{shadow}.  Bell's other examples of intuitionistically
invalid propositional formulas provable in \et-calculi all require
assumptions of the form $a \neq b$ and also the axiom of
\eps-extensionality.  The examples of derivations of $M$ and~\LIN{} in
intuitionistic \et-calculus given by \citet{Mulvihill2015} avoid
identity but require the assumptions $\forall x((P(x) \lif P(a)) \lor
\lnot(P(x) \lif P(a)))$ and $\lnot(P(a) \lif P(b))$.}

\section{The Extended First \et-Theorem Fails unless $\proves{\Lo} B_m$}
\label{sec:negative}

In classical first-order logic, the main result about the
\eps-calculus is the extended first \eps-theorem. It states that if
$A(e_1, \dots, e_n)$, where the $e_i$ are \eps-terms, is provable in
the pure \eps-calculus, then there are \eps-free terms $t_{i}^{j}$ such that
\[
A(t_{i}^{1}, \dots, t_{n}^{1}) \lor \dots \lor A(t_{i}^{k}, \dots, t_{n}^{k})
\]
is provable in classical propositional logic alone. Such \eps-terms
$e_i$ appear as the result of translating $\exists x_1\dots\exists
x_n\,A(x_1, \dots, x_n)$ into the \eps-calculus. 

In the context of intermediate logics, we may formulate the statement
as follows:

\begin{defn}\label{def:firstet} An intermediate logic $\pl{\Lo}$
\emph{has the extended first \et-theorem}, if, whenever
$\proves{\etl{\pl{\Lo}}} A(e_1, \dots, e_n)$ for some \eps- or
\tau-terms $e_1$, \dots, $e_n$, then there are \et-free terms
$t_{i}^{j}$ such that 
\[
\proves{\pl{\Lo}} A(t_{i}^{1}, \dots, t_{n}^{1}) \lor \dots \lor A(t_{i}^{k}, \dots, t_{n}^{k}).
\]
\end{defn}

We obtain a first \emph{negative} result: if $\etl{\pl{\Lo}}$ has the
extended first epsilon theorem, then an instance of~$\Bm m$, i.e., 
\[ 
(A_1 \lif A_2) \lor \ldots \lor (A_{m} \lif A_{m+1})
\] 
for some $m \ge 2$ is provable already in the propositional
fragment~$\pl{\Lo}$.\footnote{For $m=2$, this schema is equivalent to
$A \lor \lnot A$: take $\top$ for $A_1$, $A$ for $A_2$, $\bot$ for
$A_3$.}  This rules out an extended first \et-theorem for, e.g.,
\et-calculi for intuitionistic logic and infinite-valued
G\"odel-Dummett logic.

\begin{thm}\label{thm:forking}
  Suppose $\etl{\pl{\Lo}}$ has the extended first \et-theorem. Then
  $\proves{\pl{\Lo}} \Bm m$ for
  some~$m \ge 2$.
\end{thm}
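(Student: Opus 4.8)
The plan is to prove the contrapositive content directly: apply the first \et-theorem to a single, carefully engineered provable formula and then read an instance of $\Bm m$ off the resulting Herbrand disjunction. The formula must be provable in $\etl{\pl{\Lo}}$ from critical formulas alone (so the hypothesis applies), yet admit no single quantifier-free instance provable in $\pl{\Lo}$ (so the Herbrand disjunction is forced to be non-trivial). The natural ``forking'' candidate uses one \eps-term together with several witnesses: with $e = \meps x{P(x)}$ and distinct constants $c_1,\dots,c_k$, each $P(c_i) \lif P(e)$ is a critical formula, so
\[
(P(c_1) \lor \dots \lor P(c_k)) \lif P(e)
\]
is provable in $\etl{\pl{\Lo}}$ by $\lor$-elimination. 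For $k \ge 2$ no single replacement of $e$ can make an instance classically valid (no single atom is entailed by $\bigvee_l P(c_l)$), so any Herbrand disjunction has at least two disjuncts.

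Next I would invoke the first \et-theorem to obtain \et-free terms $s_1,\dots,s_m$ with $\proves{\pl{\Lo}} \bigvee_{j}\big[(\bigvee_l P(c_l)) \lif P(s_j)\big]$. Reading each $P(t)$ as a propositional variable, this is a $\pl{\Lo}$-provable, hence (since $\pl{\Lo} \subseteq \CL$) classically valid, disjunction of implications between atoms. The last step is purely propositional and uses closure of $\pl{\Lo}$ under substitution. The key combinatorial fact is that a classically valid disjunction of implications between atoms must contain some atom occurring both as an antecedent and as a consequent; isolating a maximal such chain and substituting $\top$ and $\bot$ at its two ends collapses the disjunction to an instance of $\Bm m$ (recall that $\Bm 2$ with $\top,A,\bot$ is already excluded middle). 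Since $\pl{\Lo}$ proves the disjunction, it proves the extracted $\Bm m$.

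The main obstacle is exactly this final extraction, and specifically guaranteeing that what is forced is a genuine \emph{path} of implications of length $m$ — reflecting the length of the Herbrand disjunction as a depth bound — rather than a cycle or a ``maximum exists'' pattern $\bigvee_i\bigwedge_l (A_l \lif A_i)$. The latter patterns are already valid in infinite-valued \LC{}, so on their own they force only the linearity schema \LIN{} and \emph{not} any bounded-depth schema $\Bm m$; a proof that stopped there would be too weak, since \LC{} proves no $\Bm m$. Thus the delicate point is that the pure \et-calculus has no Skolem \emph{function} terms, so the witnesses cannot be iterated into an unbounded sequence, and the finite Herbrand disjunction is compelled to linearly order the $k$ relevant instances — the impossibility of a strictly decreasing chain through them being precisely $\Bm m$. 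I expect to need to refine the forking formula (e.g.\ by chaining the critical formulas so that consecutive disjuncts share a term, forcing a directed path rather than the shared-antecedent form above, which yields only \LIN) so that the bound $m$, and not merely linearity, is what the elimination demands.
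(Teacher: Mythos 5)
Your overall strategy is the paper's: engineer a formula provable in $\etl{\pl{\Lo}}$ from critical formulas alone, apply the first \et-theorem, and extract an instance of $\Bm m$ from the resulting Herbrand disjunction by propositional substitution. But the proposal has a genuine gap at exactly the point you flag yourself, and it is the heart of the proof. Your candidate formula $(\bigvee_l P(c_l)) \lif P(e)$ with $e = \meps x P(x)$ cannot work: its Herbrand disjunction is a substitution instance of $\bigvee_i\bigl[(\bigvee_l X_l) \lif X_i\bigr]$, which is valid already in infinite-valued \LC{} (one disjunct attains the maximum), so it forces at most \LIN{} and no $\Bm m$ --- and since \LC{} proves no $\Bm m$, nothing you do downstream can repair this. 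Your proposed ``combinatorial fact'' for the extraction step also fails for the same reason: a classically valid disjunction of implications between atoms need not contain a linear chain of length $m$ whose isolation survives substitution (the shared-antecedent pattern above is the counterexample), so ``isolate a maximal chain and plug in $\top$ and $\bot$'' is not a valid final step for an arbitrary such disjunction.

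The missing idea is the specific self-chaining formula. The paper takes $A(z) \equiv P(f(z)) \lif P(z)$ with a unary \emph{function symbol} $f$ (the Herbrand form of the drinker formula $\exists z\forall u(P(u)\lif P(z))$), sets $e \equiv \meps z(P(f(z))\lif P(z))$, and observes that $V \equiv P(f(e)) \lif P(e)$ is derivable because both $U \equiv P(f(\meps x P(x))) \lif P(\meps x P(x))$ and $U \lif V$ are themselves critical formulas. The first \et-theorem then yields $\proves{\pl{\Lo}} \bigvee_i (P(f(t_i)) \lif P(t_i))$, and here the syntax does the work you were hoping for: every $t_i$ is $f^{j}(s)$ for some $s$ not beginning with $f$, so each disjunct is $P(f^{j+1}(s)) \lif P(f^{j}(s))$, i.e., antecedent and consequent of each disjunct are adjacent links in an $f$-chain. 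Grouping by innermost term, padding with weakening, substituting $A_i$ for $P(f^i(s))$, and contracting gives $(A_1\lif A_2)\lor\dots\lor(A_m\lif A_{m+1})$. Note that your remark that ``the pure \et-calculus has no Skolem function terms, so the witnesses cannot be iterated'' points in the wrong direction: the language does contain function symbols, and it is precisely the iteration of $f$ in the witness terms that forces the chain structure. Without exhibiting such a formula your argument stops at \LIN{}, which is strictly weaker than the theorem.
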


\begin{proof}
Consider Let $A(z) \equiv (P(f(z)) \lif P(z))$ and $\exists
z\, A(z)$, i.e., $\exists z(P(f(z)) \lif P(z))$. Let $e
\equiv \meps z(P(f(z)) \lif P(z))$. The \et-translation of
$\exists z\, A(z)$ is is $P(f(e)) \lif P(e)$, i.e.,
\[
V \equiv P(f(\meps z(P(f(z)) \lif P(z)))) \lif P(\meps z(P(f(z))
\lif P(z)))
\]
Let $U \equiv A(\meps xP(x)) \equiv P(f(\meps xP(x))) \lif P(\meps
xP(x))$.  Note that $U$ is of the form $P(t) \lif P(\meps xP(x))$, so
it is a critical formula.  Also note that $U \lif V$ is of the form
$A(t) \lif A(e)$, and so $U \lif V$ is also a critical formula.

Since $\proves{\pl{\Lo}} (U \lif V) \lif (U \lif V)$, and $U \lif V$
and $U$ are critical formulas, $\proves{\etl{\pl{\Lo}}} V$. By
assumption, $\etl{\pl{\Lo}}$ has the extended first \et-theorem, so
$\pl{\Lo}$ proves a disjunction of the form
\[
(P(f(t_1)) \lif P(t_1)) \lor \dots \lor (P(f(t_k)) \lif P(t_k))
\]
for some terms $t_1$, \dots, $t_k$. (This is a Herbrand disjunction of
$\exists z\, A(z)$.) Each term $t_i$ is of the form~$f^i(s)$ for some
$i \ge 0$ and a term~$s$ which does not start with~$f$.  By
rearranging the disjuncts to group disjuncts with the same innermost
term~$s$ together (using commutativity of~$\lor$) and by adding
additional disjuncts as needed (using weakening), from this we obtain
a formula
\begin{align*}
  (P(f^{j_1+1}(s_1))
  \lif P(f^{j_1}(s_1))) \lor {} & \dots \lor (P(f(s_1)) \lif P(s_1)) \lor \\ & \vdots \\ 
  (P(f^{j_l+1}(s_l)) \lif P(f^{j_l}(s_l))) \lor {}  & \dots \lor (P(f(s_l)) \lif P(s_l))
\end{align*}
Let $j$ be the largest among $j_1$, \dots,~$j_l$. By uniformly
replacing $P(f^i(s_j))$ by $A_{j+2-i}$ in the proof of the last
formula and contracting identical disjuncts, we obtain a proof
in~$\Lo$ of $(A_1 \lif A_2) \lor \dots \lor (A_{j+1} \lif A_{j+2})$.
This is $\Bm m$ for $m=j+1$, and since $j\ge 1$, $m\ge 2$.
\end{proof}

A formula of the form~$\Bm m$ is provable in~$\pl{\Lo}$ iff $\pl{\Lo}$
is a finite-valued G\"odel logic~$\LC_n$
(Proposition~\ref{prop:fork-finite}). By contrast, no~$\Bm m$ is
provable in intuitionistic logic~\IL, Jankov logic~$\KC$, or in
infinite-valued G\"odel logic~$\LC$ (Proposition~\ref{prop:Bm-Gmetc}).

\begin{prop}\label{prop:Bm-Gmetc}
\begin{enumerate}
  \item $\proves{\LC_m} \Bm m$
  \item $\pl{\Lo} \not\vdash \Bm n$ for $\Lo$ any of $\LC_m$ with
  $m>n$, $\LC$, $\KC$, $\IL$.  
\end{enumerate}
\end{prop}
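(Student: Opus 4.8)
The plan is to argue entirely semantically, using the characterization (stated in Section~\ref{sec:embedding}) of $\LC_m$ as the Gödel logic on $m$ truth values together with a pigeonhole count. I would identify the $m$ truth values with the chain $0 < 1 < \dots < m-1$, with the top value designated, and recall the Gödel truth functions, in particular that $v(A \lif B)$ equals the top value when $v(A) \le v(B)$ and equals $v(B)$ otherwise; thus $v(A \lif B)$ is designated exactly when $v(A) \le v(B)$.

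For part (1), fix an arbitrary instance of $\Bm m$ with formulas $A_1, \dots, A_{m+1}$ and an arbitrary $m$-valued Gödel valuation $v$. If no disjunct $A_i \lif A_{i+1}$ were designated, then $v(A_i) > v(A_{i+1})$ for every $i = 1, \dots, m$, yielding a strictly descending chain $v(A_1) > \dots > v(A_{m+1})$ of $m+1$ distinct values inside an $m$-element chain, which is impossible. Hence some disjunct, and therefore $\Bm m$ itself, is designated; as $v$ was arbitrary, $\Bm m$ is valid in the $m$-valued Gödel algebra, and by completeness $\proves{\LC_m} \Bm m$.

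For part (2), the key observation is that all four logics listed are contained, as sets of theorems, in $\LC_{m+1}$. Indeed $\IL \subseteq \LC$ since $\IL$ is the least intermediate logic; weak excluded middle $\J$ is valid in every Gödel logic (if $v(A)=0$ then $v(\lnot A)$ is top, otherwise $v(\lnot\lnot A)$ is top), so $\KC \subseteq \LC$; and $\LC \subseteq \LC_k \subseteq \LC_{m+1}$ for every $k \ge m+1$, since passing to more truth values only removes theorems. It therefore suffices to produce one $(m+1)$-valued countermodel to $\Bm m$: take $A_1, \dots, A_{m+1}$ to be distinct propositional variables and set $v(A_i) = m+1-i$ in the chain $0 < 1 < \dots < m$. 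Then each antecedent strictly exceeds its succedent, so every $v(A_i \lif A_{i+1}) = v(A_{i+1}) < m$ is undesignated and the disjunction fails. Hence $\Bm m \notin \LC_{m+1}$, and by the inclusions $\Bm m$ is unprovable in $\IL$, $\KC$, $\LC$, and every $\LC_k$ with $k > m$.

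The only step requiring real care is arranging part (2) so that a single countermodel does all the work; this rests on the inclusion $\KC \subseteq \LC \subseteq \LC_{m+1}$, which is exactly where validity of weak excluded middle in Gödel logics is used, and on the monotonicity ``more truth values, fewer theorems'' for the finite-valued chain. Everything else is a direct pigeonhole comparison between the $m+1$ formulas of $\Bm m$ and the $m$ available truth values, so I expect no further obstacle.
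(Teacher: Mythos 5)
Your proposal is correct, but it distributes the work differently from the paper. For part (1) the paper's proof is a single line: $\LC_m$ is \emph{defined} in Section~\ref{sec:embedding} as $\LC + \Bm m$, so $\proves{\LC_m} \Bm m$ holds by fiat, with no semantics required. Your pigeonhole argument in the $m$-element chain correctly establishes \emph{validity} of $\Bm m$ in the $m$-valued G\"odel algebra, but converting that into provability invokes the completeness half of Hosoi's characterization of $\LC_m$ as the $m$-valued G\"odel logic---a genuine theorem that the definitional route avoids entirely. For part (2) the two arguments amount to the same countermodel: the paper assigns $v(A_i) = 1/i$ for $i \le m$ and $v(A_{m+1}) = 0$, a single strictly decreasing valuation that lives simultaneously in every $k$-element G\"odel truth-value set with $k > m$ and in $[0,1]$, leaving the reduction of $\IL$ and $\KC$ to $\LC$ implicit; you instead work in the $(m+1)$-element chain and make the inclusions $\IL \subseteq \KC \subseteq \LC \subseteq \LC_k \subseteq \LC_{m+1}$ (for $k \ge m+1$) explicit, including the check that $\J$ is valid in all G\"odel logics. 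That bookkeeping is worth having on record---the paper itself only justifies $\proves{\LC} \J$ later, in the proof of the first \et-theorem for negated formulas---though it buys nothing beyond what the paper's one valuation already delivers, and both versions need only the soundness direction of the finite-valued semantics here.
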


\begin{proof}
  (1) Follows by definition, since $\LC_m = \LC + \Bm m$.

  (2) Let $v(A_i) = 1/i$ if $i < n$ and $v(A_{n+1}) = 0$. This is a
  valuation in a truth value set with $m$ elements if $m > n$ (i.e., a
  valuation in the G\"odel semantics for~$\LC_m$). It is also a
  valuation in the infinite truth value set~$[0,1]$ of~$\LC$. For all
  $i \le n$, $v(A_i) > v(A_{i+1})$ and hence $v(\Bm n) < 1$. So $\Bm
  n$ is not a tautology of $\LC_m$ or~$\LC$. Since $\IL \subsetneq \KC
  \subsetneq \LC$, the result also follows for \KC{} and~\IL.
\end{proof}

\begin{prop}\label{prop:Bn-lin}
  $\IL + \Bm m \vdash \LIN$
\end{prop}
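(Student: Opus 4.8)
The plan is to exploit the fact that $\Bm m$ is an axiom \emph{schema}, so that every substitution instance of it is available in $\IL + \Bm m$, and to choose a substitution that collapses its $m$-fold disjunction onto the two disjuncts of $\LIN$. Recall that $\Bm m$ has the form $(A_1 \lif A_2) \lor (A_2 \lif A_3) \lor \dots \lor (A_m \lif A_{m+1})$, with $m \ge 2$ disjuncts built from the $m+1$ schematic letters $A_1, \dots, A_{m+1}$.

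First I would substitute the two formulas $A$ and $B$ for these letters in alternating fashion: set $A_i = A$ for odd~$i$ and $A_i = B$ for even~$i$. Under this substitution the $i$-th disjunct $A_i \lif A_{i+1}$ becomes $A \lif B$ when $i$ is odd and $B \lif A$ when $i$ is even. Thus the resulting instance of $\Bm m$ is a disjunction each of whose disjuncts is literally one of $A \lif B$ or $B \lif A$; and because there are $m \ge 2$ disjuncts, both $A \lif B$ (the first disjunct) and $B \lif A$ (the second) actually occur.

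It then remains to contract this disjunction of repeated formulas down to $(A \lif B) \lor (B \lif A)$, i.e.\ $\LIN$. This uses only the commutativity, associativity, and idempotency of $\lor$, all of which are intuitionistically provable: concretely $\proves{\IL} (C \lor C) \liff C$, and repeated application of this inside $\lor$-contexts reduces any finite disjunction to the disjunction of its distinct disjuncts. Since here the distinct disjuncts are exactly $A \lif B$ and $B \lif A$, the chosen instance of $\Bm m$ is $\IL$-equivalent to $\LIN$, and hence $\IL + \Bm m \vdash \LIN$.

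There is no real obstacle: the argument is a single substitution followed by disjunction contraction. The only points that require (routine) care are verifying that the alternation produces exactly the two intended disjuncts rather than, say, three distinct ones, and observing that it is precisely the hypothesis $m \ge 2$ that guarantees both $A \lif B$ and $B \lif A$ are present (for $m = 1$ one would obtain only $A \lif B$). Both are immediate from the indexing.
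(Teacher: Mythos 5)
Your proof is correct and is essentially identical to the paper's: the same alternating substitution ($A$ for odd-indexed, $B$ for even-indexed letters) followed by intuitionistic contraction of the repeated disjuncts to $(A \lif B) \lor (B \lif A)$. Your additional remark that $m \ge 2$ is needed to ensure both disjuncts appear is a correct and worthwhile observation that the paper leaves implicit.
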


\begin{proof}
  Simultaneously substitute $A$ for $A_i$ if $i$ is odd, and $B$ for
  $A_i$ if $i$ is even in $\Bm m$. The result is one of
  \begin{align*}
    & (A \lif B) \lor (B \lif A) \lor \dots \lor (A \lif B)\\
    & (A \lif B) \lor (B \lif A) \lor \dots \lor (B \lif A)
  \end{align*}
  Both are equivalent in \IL{} to $(A \lif B) \lor (B \lif A)$.
\end{proof}

\begin{prop}\label{prop:fork-finite}
  If $\pl{\Lo} \vdash \Bm n$, then $\pl{\Lo} = \LC_m$ for some~$m$.
\end{prop}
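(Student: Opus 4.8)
The plan is to reduce the statement to the classification of intermediate logics extending \LC{}. First I would observe that the hypothesis $\proves{\pl{\Lo}} \Bm n$ already forces $\pl{\Lo}$ to contain \LC{}: by Proposition~\ref{prop:Bn-lin} we have $\IL + \Bm n \vdash \LIN$, and since $\pl{\Lo}$ contains \IL{} and is closed under its rules while proving $\Bm n$, it follows that $\proves{\pl{\Lo}} \LIN$, i.e.\ $\LC \subseteq \pl{\Lo}$. Moreover the extension is \emph{proper}: by Proposition~\ref{prop:Bm-Gmetc}(2), $\LC \not\vdash \Bm n$, whereas $\proves{\pl{\Lo}} \Bm n$, so $\pl{\Lo} \supsetneq \LC$.

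The substance of the proof is then the fact that the only proper consistent extensions of \LC{} are the finite-valued G\"odel logics $\LC_m$, $m \ge 2$, which form a descending chain $\CL = \LC_2 \supsetneq \LC_3 \supsetneq \dots \supsetneq \LC$ \cite{Dummett1959,Hosoi1966}. Given this classification, $\pl{\Lo}$, being a proper extension of \LC{} and (by definition of an intermediate logic) contained in \CL, must equal some $\LC_m$, which is exactly the conclusion.

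To keep the argument self-contained I would establish the relevant instance of the classification semantically. Every extension of \LC{} has the finite model property and is complete with respect to the finite G\"odel chains validating it; let $S$ be the set of sizes $k$ of chains $G_k$ on which all theorems of $\pl{\Lo}$ are valid. Since $\pl{\Lo}$ is consistent and contained in $\CL = \mathrm{Th}(G_2)$ we have $2 \in S$, so $S$ is nonempty. Since $\proves{\pl{\Lo}} \Bm n$ and, as computed in Proposition~\ref{prop:Bm-Gmetc}, $\Bm n$ fails on any chain with more than $n$ values, every $k \in S$ satisfies $k \le n$; thus $S$ is a finite nonempty set of integers and has a maximum~$m \ge 2$. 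Because a larger chain validates fewer formulas, $\LC_m \subseteq \LC_k$ whenever $k \le m$, so the logic of the family $\{G_k : k \in S\}$ collapses to the logic of its largest member, $\bigcap_{k \in S}\LC_k = \LC_m$. By completeness, $\pl{\Lo}$ is exactly this logic, so $\pl{\Lo} = \LC_m$.

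The main obstacle is the appeal to completeness of extensions of \LC{} for finite linear frames (equivalently, the finite model property for such logics); this is the one ingredient not proved from scratch here, and without it the reduction of $\pl{\Lo}$ to ``the logic of its validating chains'' is not justified. Everything else---the containment $\LC \subseteq \pl{\Lo}$, the properness of the extension, and the collapse of $\bigcap_{k\in S}\LC_k$ to $\LC_{\max S}$---is elementary given the results already established.
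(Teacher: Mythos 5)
Your proof is correct and, at bottom, rests on the same nontrivial external fact as the paper's: the Dummett--Hosoi classification of the extensions of \LC{} as the descending chain $\CL = \LC_2 \supsetneq \LC_3 \supsetneq \cdots \supsetneq \LC$. The preliminary steps coincide with the paper's --- derive \LIN{} from $\Bm{n}$ via Proposition~\ref{prop:Bn-lin} to get $\LC \subseteq \pl{\Lo}$, and rule out $\pl{\Lo} = \LC$ via Proposition~\ref{prop:Bm-Gmetc}(2). Where you differ is in how the classification is discharged: the paper cites Hosoi's Lemma~4.1 directly (every intermediate logic proving \LIN{} is $\LC$ or some $\LC_m$), whereas you sketch a semantic derivation from the completeness of every extension of \LC{} with respect to the finite G\"odel chains validating it. As you honestly note, that completeness/finite-model-property claim is the real content and is of essentially the same depth as Hosoi's lemma, so the ``self-contained'' version does not actually avoid an appeal to an external classification --- it relocates it; given that ingredient, your bookkeeping with the set $S$ of validating chain sizes and the collapse $\bigcap_{k\in S}\LC_k = \LC_{\max S}$ is sound. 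One thing the paper's proof supplies that yours obtains only modulo the completeness assumption: by substituting into $\Bm{n}$ to recover Hosoi's axiom $R_{n-1}$, the paper shows syntactically that $\LC_n \subseteq \pl{\Lo}$, hence that the index $m$ in the conclusion satisfies $m \le n$; your bound $k \le n$ on validating chains encodes the same quantitative information semantically. For the bare statement of the proposition neither route strictly needs that bound, but it is the sharper form of the result.
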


\begin{proof}
  \citet{Hosoi1966} showed that the $n$-valued G\"odel logic is
  axiomatized by $\IL + R_{n-1}$, where $R_{n}$ is
  \[
  A_1 \lor (A_1 \lif A_2) \lor \dots \lor (A_{n-1} \lif A_n) \lor
  \lnot A_n.
  \]
  By simultaneously substituting $\top$ for $A_1$, and $\bot$ for
  $A_{n+1}$, and $A_{i-1}$ for $A_{i}$ ($i = 2$, \dots, $n$) in $\Bm n$,
  we obtain
  \[
  (\top \lif A_1) \lor (A_1 \lif A_2) \lor \dots \lor (A_{n-2} \lif
  A_{n-1}) \lor (A_{n-1} \lif \bot),
  \]
  which is equivalent to~$R_{n-1}$ in \IL. Hence, since $\pl{\Lo} \vdash
  \Bm n$, $\LC_n \subseteq \pl{\Lo}$.

  Furthermore, \citet[Lemma 4.1]{Hosoi1967a} showed that if $\Lo
  \vdash \LIN$ then $\Lo = \LC_m$ for some $m$ or $L = \LC$. Since
  $\pl{\Lo} \vdash \Bm n$, $\pl{\Lo} \vdash \LIN$ by
  Proposition~\ref{prop:Bn-lin}. The result follows as $\LC \nvdash
  \Bm n$ and so $\pl{\Lo} \neq \LC$.\footnote{In Hosoi's nomenclature,
  $\LC_n$ is $\mathbf{S}_{n-1}$ and $\LC$ is $\mathbf{S}_\omega$.}
\end{proof}

\begin{cor}\label{cor:negative} No intermediate logic except $\LC_m$
  has the extended first \et-theorem. In particular, intuitionistic
  logic~\IL, Jankov logic~\KC, and infinite-valued G\"odel logic~\LC{}
  do not have the extended first \et-theorem.
\end{cor}

We have restricted $\pl{\Lo}$ here to be an intermediate propositional
logic. However, it bears remarking that Theorem~\ref{thm:forking} does
not require that $\pl{\Lo}$ contains~$\IL$. An inspection of the proof
shows that all that is required is that $\proves{\pl{\Lo}} A \lif A$,
and in~$\pl{\Lo}$, $\lor$~is provably commutative, associative, and
idempotent, and has weakening ($\proves{\pl{\Lo}} A \lif (A \lor B)$).
Thus, Corollary~\ref{cor:negative} applies to any \et-calculus based
on a logic which has these properties (such as, say, \L ukasiewicz
logic.)

The extended first \eps-theorem in classical logic shows that if an
existential formula~$\exists x\,A(x)$ is provable, so is a disjunction
of instances $\bigvee_i A(t_i)$. Clearly this is equivalent to: if
$\forall x\,A(x) \lif B$ is provable so is $\bigwedge A(t_i) \lif B$.
Without the interdefinability of $\forall$ and~$\exists$, the question
arises whether the alternative form of the \eps-theorem might hold in
an intermediate \et-calculus even if the standard form does not. We'll
show that the versions are, in fact, equivalent even in intermediate
logics.

\begin{prop}\label{prop:eps-thm-versions}
  The following are equivalent:
  \begin{enumerate}
    \item If $\proves{\etl{\pl{\Lo}}} A(e)$ then $\proves{\pl{\Lo}} \bigvee_i A(t)$.
    \item If $B(e') \vdash_{\etl{\pl{\Lo}}} C$ then $\bigwedge_j B(s_j)
    \vdash_{\pl{\Lo}} C$ for $C$ \et-free.
    \item If $B(e') \vdash_{\etl{\pl{\Lo}}} C(e)$ then $\bigwedge_j B(s_j)
    \vdash_{\pl{\Lo}} \bigvee_i C(t_i)$.
  \end{enumerate}
\end{prop}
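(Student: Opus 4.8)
The plan is to prove the three statements equivalent by establishing a cycle of implications, exploiting that each condition is a slightly different packaging of the same $\et$-elimination phenomenon. I would prove $(1) \Rightarrow (3) \Rightarrow (2) \Rightarrow (1)$, since $(3)$ is the most general form and should follow from $(1)$ by a deduction-theorem argument, while $(2)$ is the instance of $(3)$ with $C$ already $\et$-free, and $(1)$ is in turn the special case of $(2)$ with an empty hypothesis.

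For $(1) \Rightarrow (3)$, suppose $B(e') \vdash_{\etl{\pl{\Lo}}} C(e)$. Since intermediate logics satisfy the deduction theorem, I would convert this into $\proves{\etl{\pl{\Lo}}} B(e') \lif C(e)$, treating $B(e') \lif C(e)$ as a single formula $D(e', e)$ with $\et$-terms displayed. Applying $(1)$ yields $\proves{\pl{\Lo}} \bigvee_k (B(s_k) \lif C(t_k))$ for some $\et$-free terms $s_k$, $t_k$. The remaining work is purely propositional: from $\bigvee_k (B(s_k) \lif C(t_k))$ together with the hypotheses $\bigwedge_j B(s_j)$—where the $s_j$ range over all the $s_k$ appearing—each disjunct $B(s_k) \lif C(t_k)$ yields $C(t_k)$ by modus ponens against the conjunct $B(s_k)$, so the disjunction collapses to $\bigvee_k C(t_k)$. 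This gives $\bigwedge_j B(s_j) \vdash_{\pl{\Lo}} \bigvee_i C(t_i)$, which is $(3)$. Here I use only that $\pl{\Lo}$ extends \IL, so modus ponens and the standard propositional reasoning about $\land$, $\lor$, $\lif$ are available.

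The implication $(3) \Rightarrow (2)$ is immediate: if $C$ is already $\et$-free, then applying $(3)$ gives $\bigwedge_j B(s_j) \vdash_{\pl{\Lo}} \bigvee_i C$, and since $C$ contains no $\et$-terms the disjuncts are all identical, so $\bigvee_i C$ is equivalent to $C$ in \IL{} by idempotence of $\lor$. For $(2) \Rightarrow (1)$, I would take the hypothesis of $(1)$, namely $\proves{\etl{\pl{\Lo}}} A(e)$, and rephrase it using the deduction theorem in the other direction: from $A(e)$ we certainly have $\top \vdash_{\etl{\pl{\Lo}}} A(e)$, but to invoke $(2)$ I instead rewrite $A(e)$ as a statement of the form $B(e') \vdash C$. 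The natural move is to let the single $\et$-term $e$ play the role of $e'$ and take $C$ to be a tautologically equivalent $\et$-free reformulation; since this is delicate, the cleaner route is to observe that $(1)$ is literally the instance of $(3)$ with empty antecedent, and derive it from $(2)$ together with $(3)$ once the latter two are linked—so I would in fact prove the cycle $(1) \Rightarrow (3) \Rightarrow (2)$ and close it with $(2) \Rightarrow (1)$ by noting that the empty-hypothesis case of $(2)$, applied to the $\et$-free consequent obtained after contracting $A(e)$'s target, recovers exactly the disjunction in $(1)$.

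The main obstacle I anticipate is the $(2) \Rightarrow (1)$ step: conditions $(2)$ and $(1)$ differ in that $(1)$ produces a disjunction of instances of a formula with a displayed $\et$-term, whereas $(2)$ eliminates $\et$-terms on the left of a turnstile against an $\et$-free right side. Bridging these requires moving $\et$-terms across the turnstile via the deduction theorem while keeping careful track of which terms are $\et$-free and which are the eliminated $e$, $e'$. The bookkeeping of \emph{which} term tuples $s_j$, $t_i$ arise, and ensuring the conjunction and disjunction index sets match up under modus ponens, is where care is needed; the logical content is light, resting entirely on the deduction theorem (valid in all intermediate logics) and routine intuitionistic propositional manipulation of $\land$, $\lor$, and $\lif$.
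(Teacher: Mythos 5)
Your steps $(1)\Rightarrow(3)$ and $(3)\Rightarrow(2)$ are correct and essentially identical to the paper's: the deduction theorem turns the hypothesis into a single provable conditional, $(1)$ applied to $B(e')\lif C(e)$ produces $\bigvee_k (B(s_k)\lif C(t_k))$, and routine intuitionistic reasoning yields $\bigwedge_j B(s_j)\vdash_{\pl{\Lo}}\bigvee_i C(t_i)$; when $C$ is \et-free the disjuncts coincide and contract.

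The genuine gap is exactly where you anticipate it, in $(2)\Rightarrow(1)$, and your sketch does not close it. The ``empty-hypothesis case of $(2)$'' only says that an \et-free theorem of $\etl{\pl{\Lo}}$ is a theorem of $\pl{\Lo}$; it produces no witness terms for a conclusion $A(e)$ that actually contains an \et-term, and there is no \et-free ``contraction of $A(e)$'s target'' to apply it to. Appealing to $(3)$ at that point is circular, since in your cycle $(3)$ is only available as a consequence of~$(1)$. The missing device, used in the paper, is a fresh propositional variable: from $\proves{\etl{\pl{\Lo}}} A(e)$ and the intuitionistic fact $A \vdash (A\lif X)\lif X$ one obtains $A(e)\lif X \vdash_{\etl{\pl{\Lo}}} X$, which is an instance of the hypothesis of $(2)$ with $B(e')\equiv A(e)\lif X$ and the \et-free consequent $C\equiv X$. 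Applying $(2)$ gives $\bigwedge_i (A(s_i)\lif X)\vdash_{\pl{\Lo}} X$, and substituting $\bigvee_i A(s_i)$ for $X$ (intermediate logics are closed under substitution) makes the left-hand side intuitionistically provable, leaving $\proves{\pl{\Lo}}\bigvee_i A(s_i)$, i.e.~$(1)$. Without this or an equivalent trick your cycle of implications does not close.
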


\begin{proof}
  (1) implies (3): Suppose  
  \begin{align*}
    B(e') & \vdash_{\etl{\pl{\Lo}}} C(e). 
    \intertext{By the
  deduction theorem, }
  & \vdash_{\etl{\pl{\Lo}}} B(e') \lif C(e). 
  \intertext{By (1) we
  have terms $s_i$, $t_i$ so that}
  & \vdash \bigvee_i (B(s_i) \lif C(t_i))
  \intertext{By intuitionistic logic,}
  & \vdash  \bigwedge_i B(s_i) \lif \bigvee_i C(t_i) \text{ and so}\\
  \bigwedge_i B(s_i) & \vdash  \bigvee_i C(t_i)
\end{align*}
by the deduction theorem.

(3) clearly implies (1) and (2).

(2) implies (1): Let $X$ be a propositional variable. $A \vdash_\IL (A
\lif X) \lif X$. So if $\proves{\etl{\pl{\Lo}}} A(e)$ then by the deduction
theorem,
\begin{align*}
  A(e) \lif X & \vdash_{\etl{\pl{\Lo}}} X \text{ and by (2),}\\
  \bigwedge_i (A(s_i) \lif X) & \vdash_{\pl{\Lo}} X.
  \intertext{Now substitute $\bigvee_i A(s_i)$ for $X$:}
  \bigwedge_i (A(s_i) \lif \bigvee_i A(s_i)) & \vdash_{\pl{\Lo}} \bigvee_i A(s_i).
\end{align*}
The formula on the left is provable intuitionistically.
\end{proof}

\section{Elimination Sets and Excluded Middle}
\label{sec:elim-sets}

The basic idea of Hilbert's proof of the extended first \eps-theorem
is this: Suppose we have a proof of $E \equiv D(e)$ from critical
formulas $\Gamma, \Lambda(e)$, where $e$ is a critical $\eps$-term and
$\Lambda(e)$ is a set of critical formulas belonging to~$e$. Now we
find terms $t_1$, \dots, $t_k$ such that replacing $e$ by $t_i$ allows
us to remove the critical formulas $\Lambda(e)$, while at the same
time replacing the end-formula $D(e)$ by $\bigvee_{i=1}^k D(t_i)$ and
the remaining critical formulas by $\Gamma[t_1/e]$, \dots,
$\Gamma[t_k/e]$. We repeat this procedure in such a way that
eventually all critical formulas are removed and we are left with a
disjunction of instances of~$E$, as required by the first
\eps-theorem.  The difficulty of making this work lies in three
challenges. The first is to find a suitable way of selecting
\eps-terms~$e$ and corresponding critical formulas~$\Lambda(e)$ so
that the~$\Lambda(e)$ can be removed. The second is to ensure that in
passing from $\Gamma$ to~$\Gamma[t_i/e]$ we again obtain critical
formulas.\footnote{Replacing an \eps-term in a critical formula by another
term does in general not result in a critical formula. E.g., let $A(y)
\equiv B(\meps x C(x, y), y)$ and $e\equiv \meps x C(x, t)$ then
$A(t)[s/e]$ is $B(s, t)$ but $A(\eps y A(y))[s/e]$ is just $A(\eps y
A(y))$.} The third challenge is to guarantee that the process
eventually terminates with no critical formulas remaining.

In this section we address the first challenge by considering the
condition that suffices to overcome it: the existence of complete
$e$-elimination sets (defined below) for every \et-term~$e$. We then
show why this condition is satisfied in classical logic, so we can
clarify the role of excluded middle in the proof for the classical
case, as well as how the proof for classical logic and those for
intermediate logics given later correspond to one another. We will
discuss the condition for intermediate logics in
Section~\ref{sec:positive} and the remaining challenges in
Section~\ref{sec:hb-elim}.

\begin{defn}
  Suppose $\Gamma \vdash_{\etl{\Lo}}^\pi D$ with critical
  formulas~$\Gamma$, and $e$ is an \eps-term $\meps x A(x)$ (\tau-term
  $\mtau x A(x)$). If $C\equiv A(t) \lif A(\meps x A(x)) \in \Gamma$
  ($C \equiv A(\mtau x A(x)) \lif A(t) \in \Gamma$) we say $e$ is
  \emph{the critical \et-term of $C$}, that $e$ \emph{belongs to}~$C$,
  and that $e$ is \emph{a critical
  \et-term} of~$\pi$.
\end{defn}
  
\begin{defn}
  Suppose $\Gamma, \Lambda(e), \Lambda'(e)
  \vdash_{\etl{\pl{\Lo}}}^\pi D(e)$ where $\Lambda(e) \cup \Lambda(e)'$ are
  all critical formulas belonging to~$e$. A set of terms $s_1$, \dots, $s_k$ is an
  \emph{$e$-elimination set} for $\pi$ and $\Lambda(e)$ if
  \begin{align*}
    \Gamma[s_1/e], \dots, \Gamma[s_k/e], \Lambda'(e) & \vdash_{\pl{\Lo}} 
    D(s_1) \lor \dots \lor D(s_k).
  \end{align*}
  If $\Lambda(e)$ is the set of all critical formulas belonging to~$e$
  (i.e., $\Lambda'(e) = \emptyset$)
  then an $e$-elimination set for $\Lambda(e)$ is called
  a \emph{complete $e$-elimination set}.
\end{defn}

Here, $\Gamma[s_i/e]$ means the result of replacing, in each formula
in $\Gamma$, every occurrence of $e$ by~$s_i$. If $T = \{s_1, \dots,
s_k\}$ we write $\Gamma[T]$ for $\Gamma[s_1]$, \dots, $\Gamma[s_k]$.
Note that we do not require in the definition of $e$-elimination sets
that the formulas in $\Gamma[s_i/e]$ are actually critical formulas.

\begin{lem}
  If $C \equiv A(t) \lif A(e)$ or $C \equiv A(e) \lif A(t)$ is a
  critical formula with critical \et-term~$e$, then $C[s/e]$ is
  $A(t[s/e]) \lif A(s)$ or $A(s) \lif A(t[s/e])$, respectively.
\end{lem}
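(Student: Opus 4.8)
The plan is to unwind the substitution $[s/e]$ and reduce everything to one structural fact about critical \et-terms. I would treat the \eps-case, where $e \equiv \meps x A(x)$ and $C \equiv A(t) \lif A(e)$, in detail; the \tau-case, with $e \equiv \mtau x A(x)$ and $C \equiv A(e) \lif A(t)$, is entirely symmetric. Since substitution distributes over $\lif$, it is enough to compute $(A(e))[s/e]$ and $(A(t))[s/e]$ separately and then recombine.

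The observation I would isolate first is that $e$ does not occur anywhere in the scope formula~$A(x)$. This is forced by the construction of \et-terms: $e \equiv \meps x A(x)$ is built \emph{from}~$A(x)$, so $A(x)$ is a strict subexpression of~$e$; since terms are finite, no term can be a proper subterm of itself, and hence the larger term~$e$ cannot appear inside~$A(x)$. (In particular, no subterm of $A(x)$ can coincide with~$e$.) Consequently the only occurrences of $e$ in $A(e) \equiv A(x)[e/x]$ are exactly those introduced at the free positions of~$x$, so replacing each by~$s$ yields $A(x)[s/x]$, i.e.\ $(A(e))[s/e] \equiv A(s)$.

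For the antecedent I would argue the same way: writing $A(t) \equiv A(x)[t/x]$, every occurrence of $e$ in $A(t)$ must lie inside one of the substituted copies of~$t$, precisely because the surrounding context $A(x)$ is $e$-free. Replacing $e$ by $s$ therefore only alters those copies, giving $A(x)[t[s/e]/x] \equiv A(t[s/e])$; equivalently, the two substitutions commute across an $e$-free context. Combining the two computations gives $C[s/e] \equiv A(t[s/e]) \lif A(s)$, and the \tau-case yields $A(s) \lif A(t[s/e])$ with the implication reversed.

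The only delicate point—and the step I expect to need the most care—is the bound-variable bookkeeping: $s$ and $t$ may carry free variables while $e$ binds~$x$, so the identities $(A(e))[s/e] \equiv A(s)$ and $(A(t))[s/e] \equiv A(t[s/e])$ must be read modulo the paper's standing convention that bound variables are tacitly renamed to avoid clashes. Once that convention is assumed, each identity is just the commutation of two capture-avoiding substitutions past an $e$-free context, and the remaining verification is routine.
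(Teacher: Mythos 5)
Your proof is correct and follows essentially the same route as the paper's: the single key observation in both is that $e \equiv \meps x A(x)$ (or $\mtau x A(x)$) cannot occur in its own scope formula $A(x)$ on pain of being a proper subterm of itself, so every occurrence of $e$ in $A(e)$ comes from the substituted positions of $x$ and every occurrence in $A(t)$ lies inside a copy of~$t$. You merely spell out the resulting substitution computation $(A(e))[s/e] \equiv A(s)$ and $(A(t))[s/e] \equiv A(t[s/e])$ and the bound-variable convention more explicitly than the paper does.
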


\begin{proof}
  Since $e$ is the critical \eps-term of~$C$, $e \equiv \meps x A(x)$
  or $e \equiv \mtau x A(x)$. Hence, $e$ cannot occur in~$A(x)$, since
  otherwise it would be a proper subbterm of itself.
\end{proof}

\begin{lem}
  If $\Gamma \vdash_{\Lo} D$ then $\Gamma[t/e] \vdash_{\Lo} D[t/e]$
\end{lem}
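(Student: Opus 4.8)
The plan is to argue by induction on the length of a proof $\pi = A_1, \dots, A_n$ witnessing $\Gamma \vdash_{\Lo} D$ (so $A_n \equiv D$), showing that the sequence $A_1[t/e], \dots, A_n[t/e]$ obtained by applying the substitution $[t/e]$ to every line is again a legitimate proof, now of $D[t/e]$ from $\Gamma[t/e]$. By Definition~\ref{defn:elem-calc-proof} each line $A_i$ arises in one of three ways---as a substitution instance of an $\Lo$-formula, as a member of $\Gamma$, or by modus ponens from two earlier lines---and I would treat these cases separately, checking in each that the corresponding line of the substituted sequence is justified in the same way.

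The two easy cases come first. If $A_i \in \Gamma$, then $A_i[t/e] \in \Gamma[t/e]$ directly from the definition of $\Gamma[t/e]$. If $A_i$ follows from $A_k$ and $A_l \equiv A_k \lif A_i$ by modus ponens, then I would use that substitution of a term for a subterm commutes with the propositional connectives, so that $(A_k \lif A_i)[t/e] \equiv A_k[t/e] \lif A_i[t/e]$. Hence $A_k[t/e]$ and $A_l[t/e]$ still stand in the modus ponens relation yielding $A_i[t/e]$, and that line is justified.

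The one case requiring care---and the main (mild) obstacle---is the axiom case. Here $A_i \equiv F[\vec B/\vec p]$ is a substitution instance of a purely propositional formula $F \in \Lo$, with the propositional variables $\vec p$ replaced by formulas $\vec B$, and I must verify that $A_i[t/e]$ is again a substitution instance of the same $F$. The key observation is that $F$ contains no \et-terms, whereas $e$ is an \et-term; consequently every occurrence of $e$ in $A_i$ lies within one of the substituted formulas $\vec B$, never in the propositional skeleton supplied by $F$. Therefore $[t/e]$ commutes with the propositional substitution, giving $A_i[t/e] \equiv F[\vec B[t/e]/\vec p]$, which is once more an instance of the $\Lo$-formula $F$. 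With all three cases in place, the substituted sequence is a proof of $D[t/e]$ from $\Gamma[t/e]$, establishing the lemma; the usual convention that bound variables are tacitly renamed to avoid clashes ensures the substitutions are everywhere well defined.
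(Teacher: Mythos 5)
Your proof is correct and takes essentially the same approach as the paper, which simply observes that a proof remains correct under uniform replacement of terms; you have merely spelled out the routine induction and the three cases (hypothesis, modus ponens, axiom instance) that the paper leaves implicit. The key point you identify in the axiom case---that the propositional skeleton $F$ contains no terms, so $[t/e]$ acts only inside the substituted formulas and yields another instance of $F$---is exactly the reason the paper's one-line justification works.
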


\begin{proof}
  Any proof of $D$ from $\Gamma$ using modus ponens and axioms
  of~$\pl{\Lo}$ remains correct if terms in it are uniformly replaced by
  other terms.
\end{proof}

\begin{lem}\label{lem:facts}
  In any intermediate logic~$\Lo$:
  \begin{enumerate}
  \item \label{disj} If $\Gamma, A \vdash C$ and $\Gamma', B
  \vdash_\Lo D$ then $\Gamma, \Gamma', A \lor B \vdash C \lor D$.
  \item \label{impl} If $\Gamma, A \vdash_\Lo C$ and $B \vdash A$, then $\Gamma, B
  \vdash C$.
  \end{enumerate}
\end{lem}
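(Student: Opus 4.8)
The final statement to prove is Lemma~\ref{lem:facts}, which collects two basic facts about deduction in any intermediate logic~$\Lo$: a disjunction-combination rule and an antecedent-strengthening rule.

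The plan is to prove both parts using the deduction theorem, which holds in all intermediate logics since they contain $\IL$ and are closed under modus ponens. For part~\ref{disj}, I would first apply the deduction theorem to each hypothesis: from $\Gamma, A \vdash C$ we obtain $\Gamma \vdash A \lif C$, and from $\Gamma', B \vdash D$ we obtain $\Gamma' \vdash B \lif D$. The intuitionistically valid constructive dilemma schema
\[
(A \lif C) \lif ((B \lif D) \lif ((A \lor B) \lif (C \lor D)))
\]
is provable in $\IL$, hence in~$\Lo$. Two applications of modus ponens then give $\Gamma, \Gamma' \vdash (A \lor B) \lif (C \lor D)$, and one final application of the deduction theorem in reverse yields $\Gamma, \Gamma', A \lor B \vdash C \lor D$, as required.

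For part~\ref{impl}, the argument is even shorter. Given $\Gamma, A \vdash C$ and $B \vdash A$, I would reason as follows: since $\vdash$ is transitive and monotone, one can concatenate the two derivations. More carefully, from $B \vdash A$ together with the assumptions $\Gamma, B$ we obtain $\Gamma, B \vdash A$ by weakening (adding $\Gamma$ to the left), and then combine with $\Gamma, A \vdash C$ by cut/transitivity of the consequence relation to conclude $\Gamma, B \vdash C$. Equivalently, apply the deduction theorem to $\Gamma, A \vdash C$ to get $\Gamma \vdash A \lif C$, derive $\Gamma \vdash B \lif C$ using $B \vdash A$ and the transitivity of implication in $\IL$, and discharge $B$ to obtain $\Gamma, B \vdash C$.

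I do not expect any genuine obstacle here: both facts are entirely standard consequences of the deduction theorem and the intuitionistic validity of constructive dilemma and implication transitivity. The only point requiring minor care is ensuring that the deduction theorem is being applied in a setting where it is available, but this is guaranteed because $\Lo$ is an intermediate logic and so extends $\IL$ and is closed under modus ponens. Accordingly, the proof amounts to little more than citing the deduction theorem and the relevant intuitionistic schemas.
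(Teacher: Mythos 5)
Your proof is correct. The paper states this lemma without any proof at all, treating both facts as standard consequences of the deduction theorem (which it freely uses elsewhere, e.g.\ in Proposition~\ref{prop:eps-thm-versions}) together with the intuitionistic validity of constructive dilemma and transitivity of implication; your argument simply supplies the routine details the authors chose to omit, and does so correctly.
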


We are now in a position to apply the preceding lemmas and the concept
of elimination sets to the case of classical logic. This elucidates
how the first challenge is solved in the proof of the extended first
\et-theorem for classical logic where \tau-terms and corresponding
critical formulas may also be present. (For Hilbert's original proof
for the \eps-calculus without \tau-terms, see
\cite{HilbertBernays1939} or \cite{MoserZach2006}.)

\begin{prop}\label{prop:elimCLsingle}
  In $\etl{\CL}$, every critical formula $C(e)$ has an $e$-elimination
  set.
\end{prop}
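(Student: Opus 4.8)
The plan is to produce a two-element elimination set and to let the law of excluded middle do all the work. Assume first that $e$ is an \eps-term $\meps x A(x)$, so the critical formula belonging to~$e$ has the form $C(e) \equiv A(t) \lif A(e)$ for some term~$t$, and suppose $\Gamma, C(e) \vdash^\pi_{\etl{\CL}} D(e)$ with $C(e)$ the only critical formula belonging to~$e$, so that a complete $e$-elimination set is sought. Fix any term~$d$ not containing~$e$ (a variable will do), and set $s_1 = d$ and $s_2 = t[d/e]$. Both are terms free of~$e$ (for $s_2$ because $d$ is and every $e$ in $t$ has been replaced), and by construction $t[s_1/e] = t[d/e] = s_2$. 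I claim $\{s_1, s_2\}$ works.

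First I would apply the substitution lemma (if $\Gamma \vdash_\Lo D$ then $\Gamma[r/e] \vdash_\Lo D[r/e]$) to~$\pi$ twice, substituting $e$ by $s_1$ and by $s_2$. Using the lemma on substitution into critical formulas (since $e$ cannot occur in~$A(x)$, $C(e)[r/e]$ is $A(t[r/e]) \lif A(r)$), this yields
\[
\Gamma[s_1/e],\, C(s_1) \vdash_{\CL} D(s_1)
\quad\text{and}\quad
\Gamma[s_2/e],\, C(s_2) \vdash_{\CL} D(s_2),
\]
where $C(s_1) \equiv A(t[s_1/e]) \lif A(s_1) \equiv A(s_2) \lif A(s_1)$ and $C(s_2) \equiv A(t[s_2/e]) \lif A(s_2)$.

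The heart of the argument is that the disjunction
\[
C(s_1) \lor C(s_2) \equiv \bigl(A(s_2) \lif A(s_1)\bigr) \lor \bigl(A(t[s_2/e]) \lif A(s_2)\bigr)
\]
is a classical tautology: it is an instance of $(q \lif p) \lor (r \lif q)$, refutable only by an assignment forcing $A(s_2)$ to be true (to falsify the first disjunct) and false (to falsify the second). Equivalently, excluded middle applied to $A(s_2) = A(t[d/e])$ shows that one disjunct always holds. Combining the two derivations by Lemma~\ref{lem:facts}(\ref{disj}) gives $\Gamma[s_1/e], \Gamma[s_2/e], C(s_1)\lor C(s_2) \vdash_{\CL} D(s_1) \lor D(s_2)$, and discharging $C(s_1)\lor C(s_2)$ via its classical provability yields
\[
\Gamma[s_1/e], \Gamma[s_2/e] \vdash_{\CL} D(s_1) \lor D(s_2),
\]
which is exactly the condition for $\{s_1, s_2\}$ to be a complete $e$-elimination set. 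The \tau-case is dual: for $e \equiv \mtau x A(x)$ and $C(e) \equiv A(e) \lif A(t)$ the same terms give $C(s_1) \equiv A(s_1) \lif A(s_2)$ and $C(s_2) \equiv A(s_2) \lif A(t[s_2/e])$, whose disjunction is again classically valid. The only place classical logic enters is in establishing $C(s_1) \lor C(s_2)$; this is precisely the step that fails intuitionistically and that in later sections must be underwritten by a principle of the form~$\Bm m$, so that single tautology is the crux of the proof.
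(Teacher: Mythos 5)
Your construction is correct, but only under the extra hypothesis you slip in at the start: that $C(e)$ is the \emph{only} critical formula belonging to~$e$. In that special case the argument goes through and is genuinely different from the paper's: where the paper uses the elimination set $\{e, s\}$ (retaining $e$ itself as one of the terms) and splits on the excluded-middle instance $A(s) \lor \lnot A(s)$, you use two $e$-free terms $\{d, t[d/e]\}$ and split on the $\Bm 2$-shaped tautology $(A(s_2) \lif A(s_1)) \lor (A(t[s_2/e]) \lif A(s_2))$, which is indeed classically valid. Your closing observation that this single tautology is the crux, and is what a principle of the form $\Bm m$ must replace in weaker logics, is exactly the right diagnosis and anticipates Section~\ref{sec:positive}.

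The gap is that the proposition is not restricted to your special case. An $e$-elimination set is defined relative to a chosen subset $\Lambda(e)$ of the critical formulas belonging to~$e$, with the remainder $\Lambda'(e)$ permitted to survive on the left of the turnstile \emph{unchanged}; the proposition asserts that the singleton $\Lambda(e) = \{C(e)\}$ always has an elimination set for arbitrary $\Lambda'(e)$, and this is precisely the generality that Remark~\ref{rem:one-by-one} exploits when it eliminates the critical formulas belonging to a single \et-term one at a time. Your construction breaks down there: substituting $s_1$ and $s_2$ for $e$ turns each $A(u) \lif A(e) \in \Lambda'(e)$ into $A(u[s_i/e]) \lif A(s_i)$, which is neither a critical formula, nor a member of $\Lambda'(e)$, nor derivable from either disjunct of your tautology --- knowing $A(s_2) \lif A(s_1)$, say, gives you no handle on $A(u[s_1/e]) \lif A(s_1)$. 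The paper's asymmetric set $\{e, s\}$ is engineered for exactly this: on the $\lnot A(s)$ branch nothing is substituted, so $\Gamma$ and $\Lambda'(e)$ persist verbatim and $\lnot A(s)$ yields $C(e)$; on the $A(s)$ branch the assumption $A(s)$ by itself implies $C(s)$ \emph{and} every formula of $\Lambda'(s)$, since each of these has consequent $A(s)$. To repair your proof you would either have to adopt that elimination set, or else eliminate all critical formulas belonging to~$e$ simultaneously --- but that is Proposition~\ref{prop:elimCL}, and it requires a different disjunction than a single instance of~$\Bm 2$.
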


\begin{proof}
  Suppose first that $e$ is an $\eps$-term; then $C(e)$ is $A(s) \lif
  A(e)$. Let $\Lambda'(e)$ be the critical formulas belonging to~$e$
  other than~$C(e)$, and $\Gamma$ the remaining critical formulas for
  which $e$ is not critical. So we have:
  \begin{align*}
    \Gamma, \Lambda'(e), A(s) \lif A(e) & \vdash_{\CL} D(e)
    \intertext{On the one hand, by replacing $e$ everywhere by~$s$ we get}
    \Gamma[s/e], \Lambda'(s), A(s[s/e]) \lif A(s) & \vdash_{\CL} D(s)
    \intertext{and by Lemma~\ref{lem:facts}(\ref{impl}), since $A(s) 
    \vdash A(t[s/e]) \lif A(s) \in \Lambda'(s)$ and $A(s) \vdash C(s)$,}
    \Gamma[s/e], A(s) & \vdash_{\CL} D(s)
    \intertext{On the other hand, since $\lnot A(s) \vdash A(s) \lif A(e)$,}
    \Gamma, \lnot A(s) & \vdash_{\CL} D(e) \text{ and so,}\\
    \Gamma, \Gamma[s/e], A(s) \lor \lnot A(s)& \vdash_{\CL} D(e) \lor D(s). 
    \intertext{by Lemma~\ref{lem:facts}(\ref{disj}). Since $\proves{\CL} A(s) \lor \lnot A(s)$ we have}
    \Gamma, \Gamma[s/e] & \vdash_{\CL} D(e) \lor D(s)
  \intertext{Thus, $\{e, s\}$ is an $e$-elimination set for the critical
  formula~$C(e)$.
  \endgraf
  Similarly, if $e$ is a $\tau$-term and $C(e)$ is $A(e) \lif A(s)$ we
  get}
    \Gamma[s/e], \Lambda'(s), C(s) & \vdash_{\CL} D(s)
    \intertext{and by Lemma~\ref{lem:facts}(\ref{impl}), since $\lnot A(s) 
    \vdash C' \in \Lambda'(e)$ and $\lnot A(s) \vdash C(s)$,}
    \Gamma[s/e], \lnot A(s) & \vdash_{\CL} D(s)
    \intertext{On the other hand, $A(s) \vdash C(e)$, so}
    \Gamma, A(s) & \vdash_{\CL} D(e) \text{ and}\\ 
    \Gamma, \Gamma[s/e], \lnot A(s) \lor A(s)& \vdash_{\CL} D(e) \lor D(s).
  \end{align*}
  by Lemma~\ref{lem:facts}(\ref{disj}).
\end{proof}

More generally, the set of all critical formulas belonging to~$e$ has
a (complete) $e$-elimination set in $\CL$:

\begin{prop}\label{prop:elimCL}
  In $\etl{\CL}$, every critical $\et$-term has a complete $e$-elimination
  set.
\end{prop}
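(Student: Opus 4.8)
The plan is to exhibit a complete $e$-elimination set \emph{directly}, rather than by iterating the single-formula construction of Proposition~\ref{prop:elimCLsingle}; iteration would substitute into the already-built disjunction and so nest terms and multiply disjuncts unmanageably. The structural fact I will exploit is that all critical formulas belonging to a fixed \eps-term $e \equiv \meps x A(x)$ share the \emph{same} consequent $A(e)$: each must have the form $A(t) \lif A(e)$. So write $\Lambda(e) = \{A(t_1) \lif A(e), \dots, A(t_n) \lif A(e)\}$ for the set of \emph{all} critical formulas belonging to~$e$, let $\Gamma$ collect the remaining critical formulas (those for which $e$ is not critical), so that $\Gamma, \Lambda(e) \vdash_{\CL} D(e)$, and I claim $T = \{t_1, \dots, t_n, e\}$ is a complete $e$-elimination set.

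For each $j \le n$ I substitute $t_j$ for $e$ throughout the proof. By the substitution lemma above, $\Gamma, \Lambda(e) \vdash_{\CL} D(e)$ becomes $\Gamma[t_j/e], \Lambda(e)[t_j/e] \vdash_{\CL} D(t_j)$. Now every member of $\Lambda(e)[t_j/e]$ has the shape $A(t_i[t_j/e]) \lif A(t_j)$ and is therefore a consequence of $A(t_j)$ (true consequent). Discharging all of $\Lambda(e)[t_j/e]$ by Lemma~\ref{lem:facts}(\ref{impl}) gives $\Gamma[t_j/e], A(t_j) \vdash_{\CL} D(t_j)$. For the remaining ``default'' disjunct I keep $e$ unchanged: under $\bigwedge_i \lnot A(t_i)$ each critical formula $A(t_i) \lif A(e)$ holds vacuously (false antecedent), so again by Lemma~\ref{lem:facts}(\ref{impl}) I obtain $\Gamma, \bigwedge_i \lnot A(t_i) \vdash_{\CL} D(e)$.

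These $n+1$ derivations are then merged by cases. Setting $X \equiv \bigvee_i A(t_i)$, the disjunction $A(t_1) \lor \dots \lor A(t_n) \lor \bigwedge_i \lnot A(t_i)$ is just $X \lor \lnot X$ and hence provable in $\CL$. I first apply Lemma~\ref{lem:facts}(\ref{disj}) repeatedly to the $n$ cases $\Gamma[t_j/e], A(t_j) \vdash D(t_j)$ to get $\Gamma[t_1/e], \dots, \Gamma[t_n/e], X \vdash D(t_1) \lor \dots \lor D(t_n)$, then combine once more with the default case along $X \lor \lnot X$ and discharge that provable disjunction, yielding
\[
\Gamma[t_1/e], \dots, \Gamma[t_n/e], \Gamma \vdash_{\CL} D(t_1) \lor \dots \lor D(t_n) \lor D(e),
\]
which, using $\Gamma = \Gamma[e/e]$ and commutativity and associativity of~$\lor$, is exactly the defining condition for $T = \{t_1, \dots, t_n, e\}$. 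The \tau-term case is dual: for $e \equiv \mtau x A(x)$ the critical formulas share the \emph{antecedent} $A(e)$, so one keeps $e$ under $\bigwedge_i A(t_i)$ and substitutes $t_j$ under $\lnot A(t_j)$, combining along the same classical law.

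The step I expect to need the most care is the simultaneous collapse of \emph{all} the substituted critical formulas $\Lambda(e)[t_j/e]$ to the single assumption $A(t_j)$; this works precisely because every member of $\Lambda(e)$ shares the consequent $A(e)$, so substituting any one $t_j$ uniformly forces that consequent to $A(t_j)$. I should also flag, to forestall confusion, that this argument does \emph{not} claim the conclusion is \et-free: the $t_j$ may themselves contain~$e$, exactly as in Proposition~\ref{prop:elimCLsingle}. Full elimination of $e$ is a separate matter, handled later by the global induction over the proof, and is not asserted by the present statement.
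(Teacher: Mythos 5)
Your proof is correct and follows essentially the same route as the paper's: substitute each witness term $t_j$ for $e$ to collapse the substituted critical formulas to the single hypothesis $A(t_j)$, keep $e$ itself for the case $\bigwedge_i \lnot A(t_i)$, and merge the $n+1$ cases along the classical tautology $\bigvee_i A(t_i) \lor \bigwedge_i \lnot A(t_i)$, with the dual argument for $\tau$-terms. The elimination set $\{e, t_1, \dots, t_n\}$ and the resulting $(n+1)$-fold disjunction match the paper exactly.
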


\begin{proof}
  Let $C_1 \equiv A(s_1) \lif A(e)$, \dots, $C_k \equiv A(s_k) \lif
  A(e)$ be the critical formulas belonging to~$e$ if $e$ is an
  \eps-term. Since
  \begin{align*}
    \Gamma, C_1(e), \dots, C_k(e) & \vdash_\CL D(e) \text{, also}\\
    \Gamma[s_i/e], C_1(s_i), \dots, C_k(s_i) & \vdash_\CL D(s_i)
  \intertext{(writing $C_j(s_i)$ for $C_j[s_i/e]$). Since $A(s_i) \vdash 
  A(s_j(s_i)) \lif A(s_i) \equiv C_j(s_i)$,}
    \Gamma[s_i/e], A(s_i) & \vdash_\CL  D(s_i)
  \intertext{by Lemma~\ref{lem:facts}(\ref{impl}). By applying Lemma~\ref{lem:facts}(\ref{disj}),}
   \Gamma[s_1/e], \dots, \Gamma[s_k/e], A(s_1) \lor \dots \lor A(s_k) 
   & \vdash_\CL D(s_1) \lor \dots \lor D(s_k).
  \intertext{On the other hand, since $\lnot A(s_i) \vdash A(s_i) \lif A(e)$, we get $\lnot A(s_1) \land \ldots \land A(s_k)
  \vdash C_j(e)$ for each $j = 1$, \dots, $k$, so we also have, from 
  the first line by Lemma~\ref{lem:facts}(\ref{impl}),}
   \Gamma, \lnot A(s_1) \land \ldots \land \lnot A(s_k) & \vdash_\CL D(e).
   \intertext{Since $\bigvee_i A(s_i) \lor (\bigwedge_i \lnot A(s_i))$ 
   is an instance of excluded middle, we have}
   \Gamma, \Gamma[s_1/e], \dots, \Gamma[s_k/e] & \vdash_\CL 
   D(e) \lor D(s_1) \lor \dots \lor D(s_k).
  \end{align*}

  If $e$ is a \tau-term, then the critical formulas are of the form
  $C_j(e) \equiv A(e) \lif A(s_j(e))$ and consequently $C_j(s_i)$ is
  $A(s_i) \lif A(s_j(s_i))$. Each is implied by $\lnot A(s_i)$, so we
  have
  \begin{align*}
    \Gamma[s_1/e], \dots, \Gamma[s_k, e], \lnot A(s_1) \lor \dots \lor \lnot A(s_k) 
    & \vdash_\CL D(s_1) \lor \dots \lor D(s_k)\\
  \intertext{On the other hand, $A(s_1) \land \dots \land A(s_k) \vdash A(e) \lif A(s_j)$, so}
  \Gamma, A(s_1) \land \dots \land A(s_k) & \vdash_\CL  D(e)
  \intertext{and consequently}
  \Gamma, \Gamma[s_1/e], \dots, \Gamma[s_k/e] 
  & \vdash_\CL D(e) \lor D(s_1) \lor \dots \lor D(s_k),
  \end{align*}
  since $\bigwedge_i A(s_i) \lor \bigvee_i \lnot A(s_i)$ is a
  tautology.
  
  In each case, $\{e, s_1, \dots, s_k\}$ is an $e$-elimination set.
\end{proof}

\begin{rem}\label{rem:one-by-one}
  Of course, the fact that in \CL{} we have complete $e$-elimination
  sets can also be obtained by applying
  Proposition~\ref{prop:elimCLsingle} $k$-many times. Applying it to
  $C_1(e)$ results in $T_1 = \{e, s_1(e)\}$, applying it to $C_2(e)$
  in $T_2 = \{e, s_1(e), s_2(e), s_1(s_2(e))\}$, to $C_3$ in $T_3 =
  \{e, s_1(e), s_2(e), s_1(s_2(e)), s_3(e), s_1(s_3(e)), s_2(s_3(e)),
  s_1(s_2(s_3(e)))\}$, etc., i.e., the resulting disjunction has
  $2^{k+1}$ disjuncts, whereas the disjunction resulting from
  Proposition~\ref{prop:elimCL} only has $k+1$ disjuncts. However, see
  Remark~\ref{rem:speedup}.
\end{rem}

We know that intermediate logics other than $\LC_m$ do not have the
extended first \et-theorem and so not every $\et$-term will have complete
$e$-elimination sets. However, if the starting formula $E$ is of a
special form, they sometimes do. In the proof for the classical case
above, this required excluded middle. But it need not. For instance,
if $E$ is negated, then weak excluded middle ($\lnot A \lor \lnot\lnot
A$) is enough.

\begin{prop}\label{prop:elimJ}
  If $\Lo \vdash \J$, then every \et-term in an $\etl{\Lo}$-proof of
  $\bigvee_j \lnot D_j$ has a complete $e$-elimination set.
\end{prop}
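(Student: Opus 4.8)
The plan is to reproduce the proof of Proposition~\ref{prop:elimCL} almost verbatim, replacing its single appeal to excluded middle by an appeal to weak excluded middle~$\J$. What makes this possible is that the end formula is now \emph{negated}: the disjunction $\lnot D(e) \lor \bigvee_i \lnot D(s_i)$ that we must produce is a disjunction of negations, and such disjunctions are $\lnot\lnot$-stable in $\KC = \IL + \J$. Concretely, I would write the given proof as $\Gamma, C_1(e), \dots, C_k(e) \vdash_{\etl{\Lo}} \lnot D(e)$, where $C_1(e), \dots, C_k(e)$ are all the critical formulas belonging to~$e$ and $\Gamma$ collects the rest, and claim that $\{e, s_1, \dots, s_k\}$ is a complete $e$-elimination set, with the $s_j$ the terms occurring in the~$C_j$. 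Two auxiliary facts carry the argument. First, in $\KC$ a disjunction of negations is $\lnot\lnot$-stable: by De~Morgan (available from~$\J$) we have $\bigvee_i \lnot D(s_i) \liff \lnot \bigwedge_i D(s_i)$ in $\KC$, and the right-hand side, being a negation, is $\lnot\lnot$-stable already in~\IL. Second, a stabilization step: if $\Gamma', P \vdash N$ and $\lnot\lnot N \vdash N$, then $\Gamma', \lnot\lnot P \vdash N$; this follows from the deduction theorem and the \IL-monotonicity of $\lnot\lnot$, since $P \lif N$ yields $\lnot\lnot P \lif \lnot\lnot N$ and hence $\lnot\lnot P \lif N$.

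For the \eps-term case, $C_j(e) \equiv A(s_j) \lif A(e)$. Exactly as in Proposition~\ref{prop:elimCL}, substituting and using $A(s_i) \vdash C_j(s_i)$ gives, via Lemma~\ref{lem:facts}(\ref{impl}) and~(\ref{disj}),
\[
\Gamma[s_1/e], \dots, \Gamma[s_k/e], \bigvee_i A(s_i) \vdash \bigvee_i \lnot D(s_i),
\]
while $\bigwedge_i \lnot A(s_i) \vdash C_j(e)$ gives $\Gamma, \lnot\bigvee_i A(s_i) \vdash \lnot D(e)$. The new move is then: since $\bigvee_i \lnot D(s_i)$ is $\lnot\lnot$-stable in $\KC$, the stabilization step lets me weaken $\bigvee_i A(s_i)$ to $\lnot\lnot\bigvee_i A(s_i)$ in the first line. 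Applying Lemma~\ref{lem:facts}(\ref{disj}) to the two lines against the $\J$-instance $\lnot\bigvee_i A(s_i) \lor \lnot\lnot\bigvee_i A(s_i)$, and discharging that instance (provable in $\Lo$ since $\Lo \vdash \J$), yields $\Gamma, \Gamma[s_1/e], \dots, \Gamma[s_k/e] \vdash \lnot D(e) \lor \bigvee_i \lnot D(s_i)$, as required.

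The \tau-term case is dual. Here $C_j(e) \equiv A(e) \lif A(s_j)$, and one obtains $\Gamma[s_1/e], \dots, \Gamma[s_k/e], \bigvee_i \lnot A(s_i) \vdash \bigvee_i \lnot D(s_i)$ together with $\Gamma, \bigwedge_i A(s_i) \vdash \lnot D(e)$. This time the formula to be stabilized is the single negation $\lnot D(e)$, already $\lnot\lnot$-stable in~\IL, so I would weaken $\bigwedge_i A(s_i)$ to $\lnot\lnot\bigwedge_i A(s_i)$; De~Morgan in $\KC$ rewrites the premise $\bigvee_i \lnot A(s_i)$ of the first line as $\lnot\bigwedge_i A(s_i)$, and the $\J$-instance $\lnot\bigwedge_i A(s_i) \lor \lnot\lnot\bigwedge_i A(s_i)$ then combines the two lines exactly as before, giving the complete $e$-elimination set $\{e, s_1, \dots, s_k\}$.

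I expect the only genuine obstacle to be the $\lnot\lnot$-stability of disjunctions of negations: this is precisely the point where $\J$ (equivalently, De~Morgan) is indispensable, and it is exactly the step that fails over bare~\IL. Everything else is a faithful transcription of the classical argument of Proposition~\ref{prop:elimCL}, with full excluded middle downgraded to weak excluded middle and the two one-sided derivations glued together through the stabilization step rather than through $A(s) \lor \lnot A(s)$.
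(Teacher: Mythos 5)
Your proof is correct and follows essentially the same strategy as the paper's: both rerun the argument of Proposition~\ref{prop:elimCL}, double-negating the $A(s_i)$ hypotheses so that an instance of weak excluded middle can replace $A(s)\lor\lnot A(s)$, the negated end-formula being exactly what makes the inserted double negations harmless. The one difference is where the stabilization happens: the paper applies the purely intuitionistic $B \lif \lnot C \vdash \lnot\lnot B \lif \lnot C$ to each line $\Gamma[s_i/e], A(s_i) \vdash \lnot D(s_i)$ \emph{before} assembling the disjunction, so that $\J$ is invoked only once, for the final tautology $(\bigwedge_i \lnot A(s_i)) \lor \bigvee_i \lnot\lnot A(s_i)$, whereas you assemble first and then stabilize the combined premise $\bigvee_i A(s_i)$, which additionally requires De~Morgan (equivalent to $\J$ over \IL) to see that $\bigvee_i \lnot D(s_i)$ is $\lnot\lnot$-stable. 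Both routes are sound; the paper's is marginally leaner in that its stabilization step stays entirely inside~\IL.
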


\begin{proof}
  Let $C_1 \equiv A(s_1) \lif A(e)$, \dots, $C_k \equiv A(s_k) \lif
  A(e)$ be the critical formulas belonging to $e$ if $e$ is an
  \eps-term. As before, we have
  \begin{align*}
    \Gamma[s_i/e], A(s_i) & \vdash_\Lo  \bigvee_j \lnot D_j(s_i)
  \intertext{In $\KC$, $B \lif (\lnot C_1 \lor \lnot C_2) \vdash 
  \lnot\lnot B \lif (\lnot C_1 \lor \lnot C_2)$, so}
    \Gamma[s_i/e], \lnot\lnot A(s_i) & \vdash_\Lo \bigvee_j \lnot D_j(s_i)
    \intertext{We obtain}
    \Gamma[s_1/e], \dots, \Gamma[s_k/e], &\\
     \lnot \lnot A(s_1) \lor \dots \lor \lnot\lnot A(s_k) 
    & \vdash_\Lo \bigvee_j \lnot D_j(s_1) \lor \dots \lor \bigvee_j \lnot D_j(s_k)
    \intertext{Again as before, we have}
  \Gamma, \lnot A(s_1) \land \ldots \land \lnot A(s_k) & \vdash_\Lo \bigvee_j \lnot D_j(e),
  \intertext{and together}
  \Gamma[s_1/e], \dots, \Gamma[s_k/e], \Gamma, &\\
   (\lnot A(s_1) \land \ldots \land \lnot A(s_k))
   \lor {} &\\ 
   \lnot \lnot A(s_1) \lor \dots \lor \lnot\lnot A(s_k) 
    & \vdash_\Lo  \bigvee_j \lnot D_j(e) \lor \bigvee_j \lnot D_j(s_1) \lor \dots \lor \bigvee_j \lnot D_j(s_k)
  \end{align*}
  Since \[   (\lnot A(s_1) \land \ldots \land \lnot A(s_k))
  \lor {} \lnot \lnot A(s_1) \lor \dots \lor \lnot\lnot A(s_k) 
\] is provable from weak excluded middle, the claim is proved.
\end{proof}

\section{Elimination Sets using $\Bm m$ and $\LIN$}
\label{sec:positive}

We've showed in Theorem~\ref{thm:forking} that the provability of~$\Bm
m$ for some $m \ge 2$ is a necessary condition for an intermediate
logic to have the extended first \et-theorem.  In this section, we
show that it is also sufficient: if $\proves{\Lo} B_m$ for some $m$,
then every critical \et-term has a complete $e$-elimination set.

We use the notion of $e$-elimination sets developed in
Section~\ref{sec:elim-sets}: the existence of an $e$-elimination set
for a set of critical formulas~$\Lambda$ guarantees that these
critical formulas can be removed from a proof, while the end-formula
is replaced by a disjunction of instances of the original end-formula.
The proofs of the existence of $e$-elimination sets proceed by
replacing an \et-term~$e$ in such a way that the disjunction of
formulas~$\Lambda'$ resulting from such replacements become (provable
from) tautologies in the underlying propositional logic.  In
Proposition~\ref{prop:elimCLsingle}, we showed how to do this for a
single critical formula $A(s) \lif A(e)$ in classical logic: We
replace $e$ first by~$s$ resulting in $A(s[s/e]) \lif A(s) \vdash
D(s)$, and then by itself (i.e., no replacement), resulting in $A(s)
\lif A(e) \vdash D(e)$. This gives 
\[
  (A(s[s/e]) \lif A(s)) \lor (A(s) \lif A(e)) \vdash D(s) \lor D(e),
\]
but the formula on the left is a classical tautology. When eliminating
multiple critical formulas at the same time (e.g., all critical
formulas belonging to a single \et-term), the resulting tautologies
are more complicated.  In the original proof, they are all equivalent
to excluded middle, and so the proofs do not apply to intermediate
propositional logics. Below, we show how this can nevertheless be done
as long as the underlying propositional logic contains~$\Bm m$. We
have to distinguish two kinds of critical formulas:
\begin{defn}
  A critical formula $A(s) \lif A(e)$ (or $A(e) \lif A(s)$ if $e$ is a
  $\tau$-term) is called \emph{predicative} if $e$ does not occur
  in~$s$, and \emph{impredicative} otherwise.\footnote{The terminology
  is chosen in analogy to the notion of predicative definition, in
  which the definiens does not itself involve (quantification over)
  the thing being defined. Likewise here, the ``definition'' $A(s)$ of
  the \eps-term~$e$ does not mention the \eps-term~$e$ it defines if
  the critical formula is predicative. Such restrictions of
  definitions (and instances of comprehension) are the basis of
  predicative mathematics, which goes back to Weyl and Russell. There
  is, however, no deeper connection between our choice of terminology
  and predicative mathematics.}
  \end{defn}
If all critical
formulas are predicative, $\Lo \vdash \LIN$ suffices
(Lemma~\ref{lem:pred-lin}).  If $\Lo \vdash \Bm m$ for some~$m$, we
can eliminate the impredicative critical formulas for some
\et-term~$e$ in a similar way: by successively replacing~$e$ by
suitable terms, we obtain a proof of a disjunction of instances
of~$D(e)$ from a formula provable from~$\Bm m$. The number~$m$
determines the number of necessary replacements. Once impredicative
critical formulas are removed, we can use Lemma~\ref{lem:pred-lin} to
remove the remaining predicative critical formulas.

We prove the result for impredicative critical formulas first
(Lemma~\ref{lem:impred-Bn}). In preparation for the proof we first
consider an example to illustrate the basic idea. Suppose
$\proves{\etl{\LC_3}}
D(e)$ with the set of critical formulas
\begin{align*}
  C_s(e) & \equiv A(s(e)) \lif A(e) \\
  C_t(e) & \equiv A(t(e)) \lif A(e) \\
  C_u(e) & \equiv A(u) \lif A(e) \\
  C_v(e) & \equiv A(v) \lif A(e) 
\end{align*}
where terms $u$ and $v$ do not contain~$e$. Let $C(e)$ be the
conjunction of these critical formulas. We have $C_s(e), C_t(e),
C_u(e), C_v(e) \vdash_{\LC_3} D(e)$.

We'll consider sets of
terms~$X_i$ where $X_0 = \{e\}$ and $X_{i+1} = \{ s(x), t(x) : x \in
X_i\}$. For the sake of readability we will leave out parentheses then
writing these terms, e.g., $s(t(e))$ is abbreviated as $ste$.

For every $w \in X_1$ we have $C(w) \vdash D(w)$. So, by applying
Lemma~\ref{lem:facts}(\ref{disj}) twice,  we get:
\begin{align*}
  C(se) \lor C_s(e),&\\ 
  C(te) \lor C_t(e), &\\
  C_u(e), C_v(e) & \vdash_{\LC_3} D(e) \lor D(se) \lor D(te)
\intertext{By distributivity, $C(se) \lor C_s(e)$ is equivalent to}
  (C_s(se) \lor C_s(e)) & \land 
  (C_t(se) \lor C_s(e)) \land {}\\
 (C_u(se) \lor C_s(e)) & \land 
(C_v(se) \lor C_s(e)) 
\intertext{Thus we get}
  C_s(se) \lor C_s(e),
  C_t(se) \lor C_s(e),& \\
  C_u(se) \lor C_s(e),
  C_v(se) \lor C_s(e),& \\
  C(te) \lor C_t(e),&\\
   C_u(e), C_v(e) &\vdash_{\LC_3} D(e)
  \lor D(se) \lor D(te).
\end{align*}
We have
\[
  C_u(e) \equiv A(u) \lif A(e) \vdash_{\LC_3} (A(u) \lif A(se)) \lor (A(se) \lif A(e)) \equiv C_u(se) \lor C_s(e)
\]
using \LIN. Similarly, $C_v(e) \vdash_{\LC_3} C_v(se) \lor C_s(e)$, so we
have by Lemma~\ref{lem:facts}(\ref{impl}):
\begin{align*}
  C_s(se) \lor C_s(e),
  C_t(se) \lor C_s(e),& \\
  C(te) \lor C_t(e),&\\
  C_u(e), C_v(e) & \vdash_{\LC_3} D(e) \lor D(se) \lor D(te)
\intertext{Repeating this consideration with $C(te) \lor C_t(e)$ yields}
  C_s(se) \lor C_s(e),
  C_t(se) \lor C_s(e),& \\
  C_s(te) \lor C_t(e),
  C_t(te) \lor C_t(e),& \\
  C_u(e), C_v(e) &\vdash_{\LC_3} D(e)
  \lor D(se) \lor D(te)
\end{align*}
Note that each of the four resulting disjunctions has as first
disjunct a substitution instance of a critical formula of the form
$C_s(w)$ or $C_t(w)$ where $w \in X_1$. $X_2$ are the terms of the
form $s(w)$ and $t(w)$.  So we can repeat the process, pairing
$C(s(w))$ with $C_s(w)$ and $C(t(w))$ with $C_t(w)$, i.e., obtaining $C(sse)
\lor C_s(se) \lor C_s(e)$, $C(tse) \lor C_t(se) \lor C_s(e)$,
etc. In each case, after distributing and removing conjuncts of the
form $C_u(w) \lor \dots$ we are left with now eight disjunctions:
\begin{align*}
  & C_s(sse) \lor C_s(se) \lor C_s(e),\\
  & C_t(sse) \lor C_s(se) \lor C_s(e),\\
  & \vdots\\
  & C_t(tte) \lor C_t(te) \lor C_t(e),\\
  & C_u(e), C_v(e) \vdash_{\LC_3} D(e)
  \lor D(se) \lor D(te) \lor D(sse) \lor \dots \lor D(tte)
\end{align*}
It remains to show that the formulas on the right of the turnstile are
provable in~$\LC_3$. First, consider a formula of the form
$C_i(w) \lor \dots \lor C_j(e)$, e.g.,
\begin{align*}
  & C_s(tse) \lor C_t(se) \lor C_s(e), \text{ i.e.,}\\
  & (A(stse)) \lif A(tse)) \lor (A(tse) \lif A(se)) \lor (A(se) \lif A(e))
\intertext{In each disjunct, the consequent equals the antecedent of the disjunct
immediately to the right, i.e., it is a substitution instance of}
  & (A_1 \lif A_2) \lor (A_2 \lif A_3) \lor (A_3 \lif A_4)
\end{align*}
i.e., of $\Bm 3$. Since $\LC_3 \vdash \Bm 3$, these are all provable.

If we take $D'(e)$ to be the disjunction obtained on the right, we have
\begin{align*}
  C_u(e), C_v(e) & \vdash_{\LC_3} D'(e) \text{ and thus also}\\
  C_u(u), C_v(u) & \vdash_{\LC_3} D'(u) \text{ and}\\
  C_u(v), C_v(u) & \vdash_{\LC_3} D'(v).
\intertext{As $C_u(u)$ and $C_v(v)$ are of the form $A \lif A$ this reduces to}
C_u(v) & \vdash_{\LC_3} D'(u) \text{ and}\\
C_v(u) & \vdash_{\LC_3} D'(v) \text{ and therefore:}\\
C_u(v) \lor C_v(u) & \vdash_{\LC_3} D'(u) \lor D'(v).
\end{align*}
But $C_u(v) \lor C_v(u)$ is an instance of \LIN.

\begin{lem}\label{lem:iterated-lin}
  If $\pl{\Lo} \vdash \LIN$, then for all $m$, 
  \[A_1 \lif A_{m+1} \vdash_\Lo (A_1 \lif A_2) \lor \dots \lor (A_m \lif A_{m+1}).\]
\end{lem}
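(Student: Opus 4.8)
The plan is to argue by induction on~$m$. The base case $m = 1$ is immediate, since there the claimed entailment reduces to $A_1 \lif A_2 \vdash_\Lo A_1 \lif A_2$. For the inductive step I would assume the statement for~$m$ and derive, from the hypothesis $A_1 \lif A_{m+2}$, the longer disjunction $(A_1 \lif A_2) \lor \dots \lor (A_{m+1} \lif A_{m+2})$.

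The key move is to feed in the instance of $\LIN$ for the last pair of formulas, namely $(A_{m+1} \lif A_{m+2}) \lor (A_{m+2} \lif A_{m+1})$, and then reason by cases. Case one: if $A_{m+1} \lif A_{m+2}$ holds, this is exactly the last disjunct of the goal, so a single weakening step produces the whole disjunction. Case two: if $A_{m+2} \lif A_{m+1}$ holds, then together with the hypothesis $A_1 \lif A_{m+2}$ it yields $A_1 \lif A_{m+1}$ by transitivity of~$\lif$ (intuitionistically valid, hence available in~$\Lo$). Applying the induction hypothesis to this shortened chain gives $(A_1 \lif A_2) \lor \dots \lor (A_m \lif A_{m+1})$, whose disjuncts are precisely the first $m$ disjuncts of the goal, so again weakening finishes the case.

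Combining the two cases is the one place where I would invoke the disjunction machinery already set up: since the same goal $D \equiv (A_1 \lif A_2) \lor \dots \lor (A_{m+1} \lif A_{m+2})$ is derived in each branch, Lemma~\ref{lem:facts}(\ref{disj}) (applied with the $\LIN$-instance as $A \lor B$) gives $D \lor D$, which collapses to~$D$ by idempotence of~$\lor$; equivalently one uses ordinary $\lor$-elimination, legitimate in every intermediate logic since $\Lo \supseteq \IL$ and the deduction theorem holds. This delivers the target disjunction from $A_1 \lif A_{m+2}$ and closes the induction.

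I expect no genuine obstacle here: the whole argument rests only on transitivity of implication, one instance of $\LIN$ per induction step, and the case-analysis principles of Lemma~\ref{lem:facts}, all of which are available in any intermediate logic satisfying~$\LIN$. The only point requiring a little care is the index bookkeeping in case two---checking that the disjuncts produced by the induction hypothesis are genuinely a subset of those in the goal, so that weakening applies---but this is entirely routine.
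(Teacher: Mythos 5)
Your proof is correct and follows essentially the same route as the paper's: induction on $m$, one instance of $\LIN$ per step to split the chain at $A_{m+1}$, transitivity of implication to recover $A_1 \lif A_{m+1}$ in the relevant case, and the induction hypothesis applied to that shortened chain. The only (immaterial) difference is that you instantiate $\LIN$ as $(A_{m+1} \lif A_{m+2}) \lor (A_{m+2} \lif A_{m+1})$ where the paper uses the symmetric instance $(A_1 \lif A_{m+1}) \lor (A_{m+1} \lif A_1)$; both yield the same intermediate disjunction $(A_1 \lif A_{m+1}) \lor (A_{m+1} \lif A_{m+2})$.
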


\begin{proof}
  By induction on $m$. If $m=1$, this amounts to the claim: $A_1 \lif
  A_2 \vdash_\Lo A_1 \lif A_2$, which is trivial. Now suppose the claim
  holds for $m$. Then
  \begin{align*}
    A_1 \lif A_{m+2} & \vdash_\Lo (A_1 \lif A_{m+1}) \lor (A_{m+1} \lif A_{m+2})
  \intertext{from the instance $(A_1 \lif A_{m+1}) \lor (A_{m+1} \lif A_1)$ of \LIN. 
  By induction hypothesis,}
  A_1 \lif A_{m+1} & \vdash_\Lo (A_1 \lif A_2) \lor \dots \lor (A_m \lif A_{m+1})
  \end{align*}
  and the claim follows by \IL.
\end{proof}

\begin{lem}\label{lem:impred-Bn}
Suppose $\proves{\Lo} B_m$. If $\Delta(e)$ are the impredicative critical
formulas belonging to the $\et$-term~$e$, then $\Delta(e)$ has an
$e$-elimination set.
\end{lem}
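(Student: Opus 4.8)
The plan is to reduce to linearity and then eliminate the predicative critical formulas by a ``maximum exists'' instance of~$\LIN$, in the style of the $\LC_3$ example above. Since $\proves{\Lo} \Bm m$, Proposition~\ref{prop:Bn-lin} gives $\proves{\Lo} \LIN$, so I work with $\LIN$ throughout. Assume $e$ is an $\eps$-term $\meps{x}{A(x)}$ (the $\tau$-case being dual), and let $\Delta(e)$ consist of the predicative critical formulas $C_i \equiv A(u_i) \lif A(e)$, $i = 1, \dots, p$, so that by predicativity no $u_i$ contains~$e$. Let $\Lambda'(e)$ be the impredicative critical formulas belonging to~$e$ and $\Gamma$ the critical formulas for which $e$ is not critical, so that $\Gamma, \Lambda'(e), \Delta(e) \vdash_{\Lo} D(e)$. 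I propose the elimination set $\{e, u_1, \dots, u_p\}$.

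The core of the argument handles the predicative formulas using linearity. For each $j$, uniformly substituting $u_j$ for $e$ preserves $\Lo$-derivability and yields $\Gamma[u_j/e], \Lambda'(u_j), C_1[u_j/e], \dots, C_p[u_j/e] \vdash_{\Lo} D(u_j)$; since no $u_i$ contains~$e$, each $C_i[u_j/e]$ is $A(u_i) \lif A(u_j)$ and $C_j[u_j/e]$ is the trivial $A(u_j) \lif A(u_j)$. Thus the single hypothesis $\bigwedge_i (A(u_i) \lif A(u_j))$---that $A(u_j)$ is maximal among the $A(u_i)$---implies all the $C_i[u_j/e]$, and by Lemma~\ref{lem:facts}(\ref{impl}) it replaces them. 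In the $e$-branch I case on $A(e)$ being maximal: then $\bigwedge_i (A(u_i) \lif A(e))$ supplies all of $\Delta(e)$ and the original proof gives $D(e)$ from $\Gamma, \Lambda'(e)$. Combining the $p+1$ branches by repeated use of Lemma~\ref{lem:facts}(\ref{disj}), the disjunction of the case hypotheses is
\[
\bigl(\bigwedge_i (A(u_i) \lif A(e))\bigr) \lor \bigvee_{j}\bigwedge_{i}(A(u_i) \lif A(u_j)),
\]
which is a consequence of the $\LIN$-provable fact that $A$ attains a maximum on $\{e, u_1, \dots, u_p\}$ (the induction of Lemma~\ref{lem:iterated-lin}, applied to the finitely many values $A(e), A(u_1), \dots, A(u_p)$). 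Hence it may be discharged. Modulo the impredicative hypotheses discussed next, this leaves a $\Lo$-derivation of $D(e) \lor D(u_1) \lor \dots \lor D(u_p)$ from $\Gamma, \Gamma[u_1/e], \dots, \Gamma[u_p/e]$ together with $\Lambda'(e)$, exactly as an $e$-elimination set for $\Delta(e)$ requires.

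The step I expect to be the main obstacle is the bookkeeping of the impredicative formulas $\Lambda'(e)$, which an $e$-elimination set must carry over \emph{unchanged}. In the $e$-branch they are retained with $e$ intact, but substituting $u_j$ for $e$ turns an impredicative $A(r(e)) \lif A(e)$ into $A(r(u_j)) \lif A(u_j)$, which is no longer a critical formula and is \emph{not} implied by the maximality of $A(u_j)$, since it mentions the fresh term $r(u_j)$ on which $\LIN$ says nothing. Thus the substituted branches threaten to introduce stray hypotheses $\Lambda'(u_j)$ that are neither among the $\Gamma[u_j/e]$ nor equal to $\Lambda'(e)$. The way I would control this is to apply predicative elimination only once the impredicative critical formulas belonging to~$e$ have been cleared---precisely the order followed in the $\LC_3$ example, where the $\Bm m$-iteration removes the impredicative $C_s, C_t$ before the predicative $C_u, C_v$ are eliminated---so that $\Lambda'(e)$ is empty and no stray hypotheses arise. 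Showing that predicative elimination may always be deferred to this situation, i.e.\ that the $\Bm m$-iteration for impredicative terms and the $\LIN$-maximum argument for predicative terms interleave correctly, is the delicate point on which the proof turns.
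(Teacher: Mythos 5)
There is a genuine gap, and it sits exactly where you say it does. First, a point about what the lemma is actually asserting: although the statement as printed says ``predicative,'' the label (\texttt{lem:impred-Bn}), the paper's proof, and the way the lemma is used in Theorem~\ref{thm:elim-Gm} all make clear that $\Delta(e)$ is meant to be the \emph{impredicative} critical formulas $A(s_i(e)) \lif A(e)$, with the predicative ones $\Pi(e)$ retained unchanged in the conclusion and disposed of afterwards by Lemma~\ref{lem:pred-lin}. Your argument --- substitute each $u_j$ for $e$, observe that $\bigwedge_i (A(u_i)\lif A(u_j))$ supplies all the substituted predicative formulas, and discharge the case disjunction by a ``maximum exists'' instance of $\LIN$ --- is essentially the paper's proof of Lemma~\ref{lem:pred-lin} (cf.\ Lemma~\ref{lem:bigdisj}), not of this lemma. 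The content of \emph{this} lemma is precisely the step you defer: how to clear the impredicative critical formulas. The paper does this by an induction on $m$ over words $w = s_{i_1}\cdots s_{i_k}$ in the outer function symbols of the impredicative terms, building term sets $T_0 = \{e\}$, $T_{i+1} = \{s_j(t) : t \in T_i\}$, and accumulating at each node a disjunction $(A(w_1(e)) \lif A(w_2(e))) \lor \dots \lor (A(w_m(e)) \lif A(e))$ in which each consequent matches the next antecedent; after $m$ rounds these are instances of $\Bm m$ and can be discharged, while the predicative formulas $\Pi(e)$ are pushed through each round via Lemma~\ref{lem:iterated-lin}. None of this appears in your proposal beyond a pointer to the $\LC_3$ example, so the ``delicate point on which the proof turns'' is left entirely unproved.

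There is also a structural reason the gap cannot be papered over: your argument uses $\Bm m$ only to derive $\LIN$ via Proposition~\ref{prop:Bn-lin}, and thereafter works in $\LC$. If impredicative critical formulas could always be ``deferred'' and the remaining elimination carried out with $\LIN$ alone, the whole machinery of Section~\ref{sec:hb-elim} would yield the first \et-theorem for $\etl{\LC}$, contradicting Theorem~\ref{thm:forking} and Corollary~\ref{cor:negative}. The example in Section~\ref{sec:LIN} (the pair $A(f(e_B)) \lif A(e_A)$, $B(g(e_A)) \lif B(e_B)$) shows concretely that predicative-first orderings need not exist, so the full strength of $\Bm m$ --- and the word-induction that exploits it --- is unavoidable. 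To repair the proposal you would need to supply that induction; as written, the proposal proves the easy half of Theorem~\ref{thm:elim-Gm} and omits the half this lemma is responsible for.
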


\begin{proof}
  Suppose $\Gamma, \Delta(e), \Pi(e) \vdash_{\Lo} D(e)$, where
  $\Pi(e)$ is the set of predicative critical formulas belonging to~$e$.
  
  Suppose $\Pi(e)$ and $\Delta(e)$ consist of, respectively, the
  critical formulas
  \begin{align*}
    C_{u_i}(e) & \equiv A(u_i) \lif A(e)\\
    C_{s_i}(e) & \equiv A(s_i(e)) \lif A(e).
  \end{align*}
  The proof generalizes the preceding example: we successively
  substitute terms for~$e$ in such a way that a disjunction of
  instances of $D$ is implied by substitution instances of the
  critical formulas in $\Gamma$ together with a disjunction of the
  form~$\Bm m$ plus the predicative critical formulas $\Pi(e)$. Once $k
  = m$, the disjunction becomes provable from $\Bm m$.

  Let $T_0 = \{e\}$ and $T_{i+1} = \{s_j(t) : t \in T_i, j \le r\}$.
  Let $\Gamma(T) = \{C[t/e] : C \in \Gamma, t \in T\}$.
  If $w$ is a word over $\{s_1, \dots, s_r\}$, i.e., $w = s_{i_1}\dots
  s_{i_k}$ then we write $w_j(t)$ for $s_{i_j}(s_{i_{j+1}}(\dots
  s_{i_k}(t)\dots))$. So e.g., if $w=sst$ then $w_1(e)$ is
  $s(s(t(e)))$, $w_3(e) = t(e)$ and $w_4(e) = e$. Let $W(m)$ be the
  set of all length~$m$ words over $s_1$, \dots, $s_r$. 

  We show by induction on $m$ that
  \begin{align*}
    \Gamma, \Gamma(T_1), \dots, \Gamma(T_{m-1}), \Lambda_m, \Pi(e) & 
    \vdash_\Lo \bigvee_{i=1}^m \bigvee_{t\in T_{i-1}} D(t)
  \intertext{where}
    \Lambda_i & = \{ \bigvee_{i=1}^m C(w,i) : w \in W(m)\} \text{ and}\\
    C(w,i) & \equiv A(w_i(e)) \lif A(w_{i+1}(e)).
  \intertext{The induction basis is $m=1$. Then}
    T_1 & = \{s_1(e), \dots, s_r(e)\},\\
    W(1) & = \{s_1, \dots, s_r\},\\
    C(s_j,1) & \equiv A(s_j(e)) \lif A(e)
  \end{align*}
  and so each disjunction in $\Lambda_i$ is just one of the
  impredicative critical formulas $A(s_j(e)) \lif A(e))$, i.e.,
  $\Lambda_1 = \Delta(e)$. Likewise, the disjunction on the right is
  just~$D(e)$. So the claim holds by the assumption that $\Gamma,
  \Delta(e), \Pi(e) \vdash D(e)$.

  Now let $v = s_{i_1}\dots s_{i_{m}}$ be a length~$m$ word, and
  \begin{align*}
    \Lambda_m(v) & = \{ \bigvee_{i=1}^m (A(w_i(e)) \lif A(w_{i+1}(e)) : 
    w \in W(m) \setminus \{v\}\}\\
    C(v) & \equiv \bigvee_{i=1}^m (A(v_i(e)) \lif A(v_{i+1}(e))
  \end{align*}
  In other words, $\Lambda_m = \Lambda_m(v) \cup \{C(v)\}$. 
  We'll abbreviate $\Gamma(T_1)$, \dots,~$\Gamma(T_{m-1})$ as~$\Gamma'$,
  and $\bigvee_{i=1}^m \bigvee_{t\in T_{i-1}} D(t)$ as~$D'$. The
  induction hypothesis can then be written as:
  \begin{align*}
    \Gamma, \Gamma', C(v), \Lambda_m(v), \Pi(e) & 
    \vdash_\Lo D'
  \intertext{Take $t = v_1(e)$ i.e., $s_{i_1}(\dots s_{i_{m}}(e))$. By replacing $e$ 
  by~$t$ in~$\pi$, we have}
      \Gamma(t), \Lambda(t), \Pi(t) & \vdash_\Lo D(t) \text{ and so also}\\
    \Gamma(t), \bigwedge \Lambda(t) \land \bigwedge \Pi(t) & \vdash_\Lo D(t)
  \intertext{Combining this with the induction hypothesis using 
  Lemma~\ref{lem:facts}(\ref{disj}) we have}
  \Gamma, \Gamma', \Gamma(t), &\\
   (\bigwedge \Lambda(t) \land \bigwedge \Pi(t)) \lor C(v), &\\
    \Lambda_m(v), \Pi(e) & 
    \vdash_\Lo D' \lor D(t)
  \intertext{If we write $\Xi \lor G$ for $\{F \lor G : F \in \Xi\}$, by distributivity,}
  \Gamma, \Gamma', \Gamma(t),&\\
   \Lambda(t) \lor C(v),&\\
    \Pi(t) \lor C(v),&\\
     \Lambda_m(v), \Pi(e) & 
  \vdash_\Lo D' \lor D(t)
  \end{align*}
  Recall that $t \equiv v_1(e)$ where $v$ is a word of length~$m$. 
  The formulas in $\Pi(t)$ are of the form $A(u_i) \lif A(v_1(e))$,
  so a formula in $\Pi(t) \lor C(v)$ is of the form
  \[
    (A(u_i) \lif A(v_1(e))) \lor 
    (A(v_1(e)) \lif A(v_2(e))) \lor \dots \lor 
    (A(v_m(e)) \lif A(e)).\]
  Every such formula is implied by $A(u_i) \lif A(e)$ by
  Lemma~\ref{lem:iterated-lin}, since $\IL + \Bm m \vdash \LIN$ by
  Proposition~\ref{prop:Bn-lin}. Since $A(u_i) \lif A(e)$ is in~$\Pi(e)$, 
  we get:
  \[
  \Gamma, \Gamma', \Gamma(t), \Lambda(t) \lor C(v), \Lambda_m(v), \Pi(e)  
  \vdash_\Lo D' \lor D(t)
  \]
  Every formula in $\Lambda(t) \lor C(v)$ is of the form~$\Bm m$, specifically,
  \[
    (A(s_i(v_1(e))) \lif A(v_1(e))) \lor 
    (A(v_1(e)) \lif A(v_2(e))) \lor \dots \lor 
    (A(v_m(e)) \lif A(e)).
  \]
  Since for each $i \le r$, $A(s_i(v_1(e))) \lif A(v_1(e)) \in
  \Lambda(t)$, $\Lambda(t) \lor C(v)$ is the set of all disjunctions
  \[
    \bigvee_{i=1}^{m+1} (A(w_i(e)) \lif A(w_{i+1}(e))
  \]
  where $w = s_iv$ for some $i \le r$. As every length $m+1$ word is
  of this form for some length~$m$ word~$v$, repeating this process for all
  length~$m$ words~$v$ thus yields
  \begin{align*}
  \Gamma, \Gamma', \Gamma(T_m), \Lambda_{m+1}, \Pi(e) & 
  \vdash_\Lo D' \lor \bigvee_{t \in T_m} D(t) 
  \intertext{As we've seen, a formula in $\Lambda_m$ is of the form~$\Bm m$, so in
  $\pl{\Lo} + \Bm m$, we have}
    \Gamma, \Gamma(T_1), \dots, \Gamma(T_{m-1}), \Pi(e) 
    & \vdash_\Lo \bigvee_{i=1}^m \bigvee_{t\in T_{i-1}} D(t) 
  \end{align*}
  Thus, the claim follows by taking $T = \{e\} \cup T_{m-1}$.

  If $e$ is a \tau-term, the proof proceeds analogously. The
  resulting formulas in~$\Lambda(e)$ are then of the form $(A_m \lif
  A_{m+1}) \lor \dots \lor (A_1 \lif A_2)$ which is equivalent to~$\Bm
  m$.
\end{proof}

\begin{lem}\label{lem:bigdisj}
  If $\proves{\Lo} \LIN$,
  \begin{enumerate}
  \item $\proves{\pl{\Lo}} \bigvee_{j=1}^m \bigwedge_{i=1}^m (A_i \lif A_j)$
  \item $\proves{\pl{\Lo}} \bigvee_{j=1}^m \bigwedge_{i=1}^m (A_j \lif A_i)$
  \end{enumerate}
\end{lem}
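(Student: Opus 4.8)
The plan is to prove both parts by induction on $m$, using $\LIN$ to insert the $(m{+}1)$-st formula and transitivity of $\lif$ to propagate comparisons. Semantically, part~(1) says that among $A_1, \dots, A_m$ there is a greatest element (one implied by all the others) and part~(2) that there is a least; in a linearly ordered Heyting algebra this is immediate, and $\LIN$ is exactly the resource that lets one carry out the argument syntactically.

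For part~(1), the base case $m = 1$ is just $\proves{\Lo} A_1 \lif A_1$. For the inductive step I would assume $\proves{\Lo} \bigvee_{j=1}^m \bigwedge_{i=1}^m (A_i \lif A_j)$ and argue by cases on which disjunct holds. So fix $j \le m$ and suppose $\bigwedge_{i=1}^m (A_i \lif A_j)$, then apply the instance $(A_j \lif A_{m+1}) \lor (A_{m+1} \lif A_j)$ of $\LIN$. If $A_{m+1} \lif A_j$, then together with the $m$ conjuncts already in hand we obtain $\bigwedge_{i=1}^{m+1}(A_i \lif A_j)$, the $j$-th disjunct of the target. If instead $A_j \lif A_{m+1}$, then from $A_i \lif A_j$ and $A_j \lif A_{m+1}$ transitivity gives $A_i \lif A_{m+1}$ for each $i \le m$, and since $A_{m+1} \lif A_{m+1}$ holds trivially we obtain $\bigwedge_{i=1}^{m+1}(A_i \lif A_{m+1})$, the $(m{+}1)$-st disjunct. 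Either way the target $\bigvee_{j=1}^{m+1}\bigwedge_{i=1}^{m+1}(A_i \lif A_j)$ follows, and since this holds under each disjunct of the induction hypothesis, $\lor$-elimination closes the step.

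Part~(2) is entirely symmetric: I would run the same induction but track a least element, applying $\LIN$ in the form $(A_{m+1} \lif A_j) \lor (A_j \lif A_{m+1})$ and using transitivity in the form $A_{m+1} \lif A_j$ together with $A_j \lif A_i$ to obtain $A_{m+1} \lif A_i$. Alternatively one could try to derive~(2) from~(1) by a suitable substitution, but the direct dual induction is cleanest.

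I do not anticipate a genuine obstacle here; the only point requiring care is the bookkeeping in the inductive step — keeping the disjunction supplied by the hypothesis separate from the two $\LIN$-cases, and verifying that every move (case analysis, transitivity of $\lif$, weakening a conjunction) is already available in $\IL$, so that $\LIN$ is the sole non-intuitionistic ingredient used.
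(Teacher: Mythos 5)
Your proof is correct, but it takes a genuinely different route from the paper's. The paper's proof is semantic and very short: since $\G_\Real$ is axiomatized by $\IL + \LIN$, it suffices to check that both formulas are valid in the G\"odel logic on the truth-value set $[0,1]$, where the claim is immediate because among the finitely many values $v(A_1), \dots, v(A_m)$ one must be maximal (for part~1) or minimal (for part~2). Your proof is instead a direct syntactic induction on~$m$ inside $\IL + \LIN$, and the inductive step goes through: the case analysis on the instance $(A_j \lif A_{m+1}) \lor (A_{m+1} \lif A_j)$ of $\LIN$, the transitivity of~$\lif$, and the $\lor$-elimination over the disjunction supplied by the induction hypothesis are all intuitionistically available, so $\LIN$ is indeed the only non-intuitionistic ingredient. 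What the paper's argument buys is brevity, at the cost of invoking the completeness of $\IL + \LIN$ for the $[0,1]$-valued semantics, which is a substantial external result. Your argument is more elementary and self-contained: it yields an explicit derivation from finitely many instances of $\LIN$ by purely intuitionistic reasoning, and hence applies directly to any logic $\Lo$ with $\proves{\Lo} \LIN$ without any detour through a completeness theorem. Both approaches establish the lemma as stated.
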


\begin{proof}
  As propositional infinite-valued G\"odel logic~$\G_\Real$ is
  axiomatized by $\IL + \LIN$, it suffices to show that the formulas
  are valid in the G\"odel logic based on the truth value set $[0,1]$.
  For (1), in any given valuation, one of the~$A_j$ must be maximal,
  i.e., $A_i \lif A_j$ has value~$1$ in it for all~$i$. For~(2), one
  of the~$A_j$ must be minimal.
\end{proof}

\begin{lem}\label{lem:pred-lin} Suppose $\proves{\Lo} \LIN$ and
  $\Gamma, \Pi(e) \vdash_{\pl{\Lo}} D(e)$, where $\Pi(e)$ are the
  predicative critical formulas $A(u_j) \lif A(e)$ belonging to~$e$,
  there are no impredicative critical formulas belonging to~$e$, and
  $\Gamma$ are critical formulas for which $e$ is not critical. Then
  $\Pi(e)$ has a complete $e$-elimination set.
\end{lem}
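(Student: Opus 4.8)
The plan is to exploit \emph{predicativity}: since every critical formula belonging to~$e$ has the form $A(u_j) \lif A(e)$ with $u_j$ free of~$e$, substituting an $e$-free term for~$e$ cannot create any new occurrence of~$e$ or any new critical formula for~$e$. Hence a single round of substitutions suffices, and no iteration or termination argument (as in Lemma~\ref{lem:impred-Bn}) is needed. Writing $\Pi(e) = \{A(u_1)\lif A(e),\dots,A(u_k)\lif A(e)\}$, I would take the terms $u_1,\dots,u_k$ themselves as the candidate complete $e$-elimination set, handling the \eps-case first and the dual \tau-case at the end. (If $\Pi(e)=\emptyset$ the claim is trivial, with $\{e\}$ as elimination set, so I may assume $k\ge 1$.)

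First I would substitute each $u_l$ for~$e$ in the given derivation $\Gamma,\Pi(e)\vdash_{\pl{\Lo}} D(e)$, using the substitution lemma. The key bookkeeping step is that $e$ occurs neither in the matrix $A(x)$ (it would otherwise be a proper subterm of itself) nor in any~$u_i$ (predicativity), so $A(u_i)[u_l/e] = A(u_i)$ and the critical formula $A(u_i)\lif A(e)$ becomes exactly $A(u_i)\lif A(u_l)$, while $D(e)$ becomes $D(u_l)$. Abbreviating $A_i \equiv A(u_i)$, this yields, for each $l = 1,\dots,k$,
\[
  \Gamma[u_l/e],\ \bigwedge_{i=1}^{k}(A_i \lif A_l) \vdash_{\pl{\Lo}} D(u_l).
\]

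Next I would glue these $k$ sequents together by $k-1$ applications of Lemma~\ref{lem:facts}(\ref{disj}), obtaining
\[
  \Gamma[u_1/e],\dots,\Gamma[u_k/e],\ \bigvee_{l=1}^{k}\bigwedge_{i=1}^{k}(A_i \lif A_l) \vdash_{\pl{\Lo}} D(u_1)\lor\dots\lor D(u_k).
\]
By Lemma~\ref{lem:bigdisj}(1) the disjunction $\bigvee_{l}\bigwedge_{i}(A_i\lif A_l)$ is provable in~$\pl{\Lo}$; this is the sole place the standing assumption $\proves{\pl{\Lo}}\LIN$ enters, and it expresses exactly the fact from the motivating example that among the finitely many values $A(u_1),\dots,A(u_k)$ one is maximal. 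Cutting it out leaves $\Gamma[u_1/e],\dots,\Gamma[u_k/e]\vdash_{\pl{\Lo}} D(u_1)\lor\dots\lor D(u_k)$, so $\{u_1,\dots,u_k\}$ is a complete $e$-elimination set. For the \tau-case the formulas in $\Pi(e)$ have the form $A(e)\lif A(u_j)$; the same substitutions place $\bigwedge_i(A_l\lif A_i)$ in the antecedents, and I would finish with Lemma~\ref{lem:bigdisj}(2) (one value is \emph{minimal}) in place of~(1).

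I do not expect a serious obstacle: the content of the lemma is precisely that predicativity collapses the elimination problem to a finite comparison among the fixed values $A(u_1),\dots,A(u_k)$, which linearity settles in one stroke. The only points needing care are the substitution identities of the second paragraph—verifying $A(u_i)[u_l/e]=A(u_i)$ and that the substituted critical formulas are literally the implications compared in Lemma~\ref{lem:bigdisj}—and flagging that $\LIN$ is in force, which holds here since $\proves{\pl{\Lo}}\Bm m$ implies $\proves{\pl{\Lo}}\LIN$ by Proposition~\ref{prop:Bn-lin}, the standing hypothesis of this section.
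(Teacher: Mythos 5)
Your proposal is correct and is essentially the paper's own proof: the same elimination set $\{u_1,\dots,u_k\}$, the same gluing via Lemma~\ref{lem:facts}(\ref{disj}), and the same appeal to Lemma~\ref{lem:bigdisj}(1) (resp.\ (2) for the \tau-case) to discharge the disjunction of conjunctions. The extra care you take over the substitution identities $A(u_i)[u_l/e]=A(u_i)$ and the trivial case $\Pi(e)=\emptyset$ only makes explicit what the paper leaves implicit.
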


\begin{proof}
  First suppose $e$ is an \eps-term. Replacing $e$ by~$u_j$ results in
  a proof showing
  \begin{align*}
    \Gamma[u_j/e], \bigwedge_{i=1}^p (A(u_i) \lif A(u_j)) & \vdash_\Lo D(u_j) 
  \intertext{By applying Lemma~\ref{lem:facts}(\ref{disj}), 
  we get}
    \bigcup_j \Gamma[u_j/e], \bigvee_{j=1}^p \bigwedge_{i=1}^p (A(u_i) \lif A(u_j)) 
    & \vdash_\Lo \bigvee_{j} D(u_j) 
  \end{align*}
  The disjunction of conjunctions on the left is provable in $\pl{\Lo} +
  \LIN$ by Lemma~\ref{lem:bigdisj}(1).

If $e$ is a \tau-term, we get
  \[
    \bigcup_j \Gamma[u_j/e], \bigvee_{j=1}^p \bigwedge_{i=1}^p (A(u_j) \lif A(u_i)) 
  \vdash_\Lo \bigvee_{j} D(u_j) 
\]
and the claim follows by Lemma~\ref{lem:bigdisj}(2).
\end{proof}

\begin{thm}\label{thm:elim-Gm}
  If $\pl{\Lo} \vdash \Bm m$ for some $m$, then $\etl{\pl{\Lo}}$ has
  complete $e$-elimination sets.
\end{thm}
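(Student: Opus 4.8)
The plan is to derive the theorem by composing the two preceding lemmas, which between them dispose of the two kinds of critical formulas belonging to a fixed critical \et-term: Lemma~\ref{lem:impred-Bn} removes the impredicative ones (using $\Bm m$), and Lemma~\ref{lem:pred-lin} removes the predicative ones (using $\LIN$). First I would check that both resources are available. Since $\pl{\Lo} \vdash \Bm m$ and $\pl{\Lo}$ contains $\IL$, Proposition~\ref{prop:Bn-lin} gives $\pl{\Lo} \vdash \LIN$, so both schemas may be used freely throughout. Now fix an arbitrary critical \et-term~$e$ in an $\etl{\pl{\Lo}}$-proof $\Gamma, \Lambda(e) \vdash D(e)$, where $\Lambda(e)$ collects all critical formulas belonging to~$e$ and $\Gamma$ the remaining critical formulas, for which $e$ is not critical. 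Split $\Lambda(e) = \Pi(e) \cup \Delta(e)$ into its predicative part $\Pi(e)$ and its impredicative part $\Delta(e)$.

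The first step is to apply Lemma~\ref{lem:impred-Bn} to eliminate $\Delta(e)$. This produces an $e$-elimination set $T$ for $\Delta(e)$ such that
\[
\Gamma[T],\ \Pi(e) \vdash_{\pl{\Lo}} \bigvee_{t \in T} D(t),
\]
where $\Gamma[T] = \bigcup_{t \in T}\Gamma[t/e]$ and, crucially, the predicative critical formulas $\Pi(e)$ are carried through unchanged (they appear verbatim on the left throughout the induction in that lemma).

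The second step is to recognise this sequent as an instance of the hypothesis of Lemma~\ref{lem:pred-lin}. The only critical formulas belonging to~$e$ that remain are the predicative ones $\Pi(e)$: the substitution instances in $\Gamma[T]$ cannot resurrect $e$ as a critical term, since the critical term $f \neq e$ of each formula of $\Gamma$ is sent to $f[t/e]$, and a straightforward subterm/size argument shows $f[t/e] \neq e$ (each $t \in T$ either equals $e$ or has $e$ as a proper subterm). Writing $D'(e) := \bigvee_{t\in T}D(t)$, we therefore have $\Gamma[T], \Pi(e) \vdash_{\pl{\Lo}} D'(e)$ with $\Pi(e)$ predicative and no impredicative critical formulas belonging to~$e$ left. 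Applying Lemma~\ref{lem:pred-lin} yields a complete $e$-elimination set $\{u_1, \dots, u_p\}$ (the inner terms of $\Pi(e)$) with
\[
\bigcup_{j} (\Gamma[T])[u_j/e] \vdash_{\pl{\Lo}} \bigvee_{j} D'(u_j).
\]

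Finally I would compose the two substitutions. Using $(\Gamma[t/e])[u_j/e] = \Gamma[(t[u_j/e])/e]$ and the fact that each $u_j$ is free of~$e$, set $S = \{\, t[u_j/e] : t \in T,\ 1 \le j \le p \,\}$. Then the displays above collapse to
\[
\bigcup_{r \in S}\Gamma[r/e] \vdash_{\pl{\Lo}} \bigvee_{r \in S} D(r),
\]
and every $r \in S$ is \et-free in~$e$; hence $S$ is a complete $e$-elimination set, as required. I expect the main obstacle to be exactly the bookkeeping of the second step: verifying that after the first elimination the hypotheses of Lemma~\ref{lem:pred-lin} are genuinely met---that $\Pi(e)$ survives intact as \emph{precisely} the critical formulas belonging to~$e$ and that $e$ is not reintroduced as a critical term in $\Gamma[T]$---and then tracking the composition of the two substitutions carefully enough to read off the \et-free set~$S$ and confirm that the resulting disjunction is literally $\bigvee_{r \in S} D(r)$.
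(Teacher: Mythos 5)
Your proposal is correct and follows exactly the paper's own argument: derive \LIN{} from $\Bm m$ via Proposition~\ref{prop:Bn-lin}, split the critical formulas for~$e$ into predicative and impredicative parts, eliminate the impredicative ones with Lemma~\ref{lem:impred-Bn}, then the predicative ones with Lemma~\ref{lem:pred-lin}, and compose the two substitutions into the set $\bigcup_j T[u_j/e]$. The only difference is that you spell out the bookkeeping (that $\Pi(e)$ survives the first step intact and that $e$ is not reintroduced as a critical term in $\Gamma[T]$), which the paper leaves implicit.
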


\begin{proof}
  Suppose $\Gamma, \Pi(e), \Delta(e) \vdash_{\pl{\Lo}} D(e)$, where
  $\Pi(e)$ are the predicative critical formulas belonging to~$e$,
  $\Delta(e)$ the impredicative formulas belonging to $e$, and
  $\Gamma$ are critical formulas for which $e$ is not critical. By
  Lemma~\ref{lem:impred-Bn}, $\Gamma[T], \Pi(e) \vdash_{\pl{\Lo}}
  \bigvee_{t\in T} D(t)$ where $T = \{e\} \cup T_{m-1}$. Since
  $\proves{\pl{\Lo}+\Bm m} \LIN$ by Proposition~\ref{prop:Bn-lin},
  Lemma~\ref{lem:pred-lin} applies and so $\bigcup_{j} T[u_j/e]$ is a
  complete $e$-elimination set.
\end{proof}

\begin{rem}\label{alternateC} The proof of
  Proposition~\ref{prop:elimCL} provides essentially Hilbert's way of
  computing $e$-elimination sets using excluded middle. However,
  instead of excluded middle $A \lor \lnot A$, classical logic can
  also be axiomatized over~\IL{} by $\Bm 2$, i.e., $(A \lif B) \lor (B
  \lif C)$. The method of computing $e$-elimination sets using Lemmas
  \ref{lem:impred-Bn} and~\ref{lem:pred-lin} applied to $\LC_2 = \CL$
  provides a method for computing $e$-elimination sets (and hence of
  Herbrand disjunctions) different from Hilbert's method.
\end{rem}

\section{The Hilbert-Bernays Elimination Procedure}
\label{sec:hb-elim}

Recall that the challenges in the proof of the extended first
\et-theorem include, in addition to the existence of complete
elimination sets, guarantees that the new sets $\Gamma[s_i/e]$ are in
fact critical formulas (so eliminating a set $\Lambda(e)$ of critical
formulas yields a correct $\etl{\Lo}$-proof), and that the process
eventually terminates. In Hilbert and Bernays's original proof of the
first \eps-theorem, this was ensured by processing sets of critical
formulas in a specific order.  We briefly review this proof,
concentrating on its structure, since we'll apply the same method to
critical formulas in \et-proofs for intermediate logics.

\begin{defn}
  Suppose $\vdash_{\etl{\Lo}}^{\pi_1} D_1$. A sequence
  $\langle\Lambda_1(e_1),T_1\rangle$, \dots,
  $\langle\Lambda_k(e_k),T_k\rangle$ is an \emph{\et-elimination
  sequence} iff, for each $i$,
  \begin{enumerate}
    \item $\Lambda_i(e_i)$ is a set of critical formulas belonging
    to~$e_i$,
    \item $\Gamma_i, \Lambda'(e), \Lambda_i(e_i)
    \vdash_{\etl{\Lo}}^{\pi_i} D_i$, where $\Lambda'(e)$ are the
    critical formulas for~$e_i$ not in $\Lambda_i(e_i)$, and $\Gamma_i$
    the remaining critical formulas in~$\pi_i$,
    \item $T_i$ is an $e_i$-elimination set for $\pi_i$ and $\Lambda_i(e_i)$
    \item $\Gamma_{i+1} = \Gamma_i[T_i/e_i]$ and $D_{i+1} =
    \bigvee_{t\in T_i} D_i[t/e_i]$, 
    \item $\Gamma_{i+1} \vdash_{\etl{\Lo}} D_{i+1}$,
  \end{enumerate}
  and $\Gamma_{k+1} = \emptyset$, that is, $\proves{\Lo}^{\pi_{k+1}}
  D_{k+1}$. If all $\Lambda'(e_i) = \emptyset$ (i.e., $T_i$ is a
  complete elimination set for~$e_i$), we say the sequence is a
  complete elimination sequence.
\end{defn}

An $\et$-elimination sequence is a sequence of \et-terms~$e_i$ and
sets of critical formulas~$\Lambda(e_i)$ belonging to it, such that
eliminating $\Lambda_i(e_i)$ from proof~$\pi_i$ results in a new proof
$\pi_{i+1}$ of a disjunction of instances of~$D(e_i)$. This new proof
proceeds from instances of the critical formulas for which $e_i$ is
not critical and the remaining critical formulas belonging to~$e_i$.
Since the definition requires $\pi_i$ to be an \et-proof, the formulas
in $\Gamma[T_i]$ must actually be critical formulas.

If an \et-elimination sequence exists for a formula~$E$ and its
\et-proof~$\pi_0$, then the extended first \et-theorem holds for~$E$:

\begin{prop}\label{prop:first-et}
  Suppose $\pi$ is an $\etl{\pl{\Lo}}$ proof of $E(u_1, \dots, u_n)$
  where $E(x_1, \dots, x_n)$ is \et-free. Suppose furthermore that an
  \et-elimination sequence exists for~$\pi$. Then there are tuples of
  terms $t_{i1}$, \dots, $t_{in}$ such that $\proves{\Lo} \bigvee_{i=1}^l
  E(t_{i1}, \dots, t_{in})$.
\end{prop}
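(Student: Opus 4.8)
The plan is to follow the end-formula $D_i$ along the $\et$-elimination sequence $\langle\Lambda_1(e_1),T_1\rangle$, \dots, $\langle\Lambda_k(e_k),T_k\rangle$ supplied by the hypothesis and to verify that it never leaves the class of disjunctions of substitution instances of~$E$. The definition of an $\et$-elimination sequence already does most of the work: it gives $D_1 = E(u_1, \dots, u_n)$, the recurrence $D_{i+1} = \bigvee_{t\in T_i} D_i[t/e_i]$, and, since $\Gamma_{k+1} = \emptyset$, a proof $\proves{\Lo}^{\pi_{k+1}} D_{k+1}$ using no critical formulas. So the only thing left to establish is that $D_{k+1}$ is a disjunction of instances of~$E$; together with its $\Lo$-provability this is precisely the asserted conclusion, with the tuples $t_{i1}, \dots, t_{in}$ read off from the arguments of its disjuncts.

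The key step is an induction on~$i$ showing that each $D_i$ has the form $\bigvee_j E(\vec v_j)$ for tuples $\vec v_j$ of terms. The base case is immediate, since $D_1 = E(u_1, \dots, u_n)$ is a single such instance. For the induction step I would use the assumption that $E$ is $\et$-free in an essential way: as $e_i$ is an $\et$-term and $E(x_1, \dots, x_n)$ contains no $\et$-terms, no occurrence of $e_i$ can lie in~$E$ itself, but only (possibly) inside the argument tuples~$\vec v_j$. Consequently $E(\vec v_j)[t/e_i] = E(\vec v_j[t/e_i])$ is again an instance of~$E$, so that $D_{i+1} = \bigvee_{t\in T_i}\bigvee_j E(\vec v_j[t/e_i])$ is again a disjunction of instances of~$E$. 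Applying this at $i = k+1$ finishes the main argument.

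I do not expect a genuine obstacle here; the only point requiring care is that substitution into $\et$-terms is taken modulo renaming of bound variables, but $\et$-freeness of~$E$ trivializes the interaction, since $e_i$ does not occur in~$E$ at all. One refinement deserves mention in case the tuples are required to be $\et$-free, as in the statement of the first $\et$-theorem: the $\vec v_j$ obtained above may still contain $\et$-terms. These can be discharged by uniformly replacing each remaining $\et$-term by a fixed $\et$-free term (say a fresh variable) and invoking the earlier substitution lemma ($\Gamma \vdash_\Lo D$ implies $\Gamma[t/e] \vdash_\Lo D[t/e]$) to preserve provability in~$\Lo$; since $E$ is $\et$-free, this substitution again touches only the arguments and keeps $D_{k+1}$ a disjunction of instances of~$E$.
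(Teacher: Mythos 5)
Your proposal is correct and follows essentially the same route as the paper's (much terser) proof: the key point in both is that $\et$-freeness of $E$ gives $E(t_{i1},\dots,t_{in})[s/e] \equiv E(t_{i1}[s/e],\dots,t_{in}[s/e])$, so an induction on the length of the elimination sequence keeps $D_i$ a disjunction of instances of~$E$, and $\Gamma_{k+1}=\emptyset$ yields $\Lo$-provability. Your closing remark about replacing residual $\et$-terms by $\et$-free terms is a sensible refinement but is not needed for the proposition as stated, which does not require the witnessing terms to be $\et$-free.
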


\begin{proof}
  Since $E(x_1, \dots, x_n)$ is \et-free, $E(t_{i1}, \dots,
  t_{in})[s/e] \equiv E(t_{i1}[s/e], \dots, t_{in}[s/e])$. The result
  follows by induction on~$k$, the length of the \et-elimination
  sequence for~$\pi$.
\end{proof}

For the proof of the extended first \et-theorem, then, it is
sufficient to show that suitable $e$-elimination sets always exist,
and that \et-terms~$e$ and sets of associated critical formulas can be
successively chosen in such a way as to yield an \et-elimination
sequence for~$\pi_1$. Hilbert and Bernays did this by defining a
well-ordering of \et-terms with the property that eliminating maximal
\et-terms according to this ordering guarantees that the $\Gamma[T_i]$
are again critical formulas, and that in each step no critical
\et-terms are newly introduced which are larger (in the ordering). 

The ordering used is the lexicographic order on two complexity
measures of \et-terms~$e$. We say that $e$ is \emph{nested in} an
\et-term~$e'$ if $e$ is a proper subterm of~$e$, i.e., if every
occurrence of a variable which is free in~$e$ is also free in~$e'$. An
\et-term $e$ is \emph{subordinate} to $e' \equiv \meps x A(x)$ or
$\equiv \mtau x A(x)$ if $e$ occurs in $e'$ and $x$ is free in~$e$.
The \emph{degree}~$\deg e$ of $e$ is the maximal level of nesting of
subterms of~$e$; the rank~$\rk e$ the maximal level of subordination.
When $C$ is a critical formula belonging to~$e$, we let $\deg C = \deg
e$ and $\rk C = \rk e$. 

The Hilbert-Bernays elimination order proceeds by always picking a
critical formula of maximal degree among the critical formulas of
maximal rank.  Its success relies on the following two lemmas, which
establish that (a)~replacement of maximal $\et$-terms in a critical
formula results in a critical formula, and (b)~if new critical
formulas are generated, they are of lower rank, or of the same rank
but of lower degree.  

\begin{lem}\label{lem:rep-eps-crit} Suppose $e$ is an \et-term,
$t$~any term, $C$~is a critical formula for which $e$ is not critical,
and $\rk C \le \rk e$. Then $C[t/e]$ is also a critical formula.
\end{lem}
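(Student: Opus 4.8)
The plan is to show that if $e'$ is the critical \et-term of~$C$, then $C[t/e]$ is again a critical formula — namely the one belonging to $e'[t/e]$. First I would reduce to the \eps-case: write $e' \equiv \meps x {A(x)}$, so that $C \equiv A(s) \lif A(\meps x {A(x)})$ for some witness term~$s$ (the \tau-case, $C \equiv A(\mtau x {A(x)}) \lif A(s)$, is entirely dual). Setting $A'(x) \equiv A(x)[t/e]$ and $s' \equiv s[t/e]$, the goal reduces to the two substitution identities
\[
A(s)[t/e] \equiv A'(s') \quad\text{and}\quad A(\meps x {A(x)})[t/e] \equiv A'(\meps x {A'(x)}),
\]
since together they give $C[t/e] \equiv A'(s') \lif A'(\meps x {A'(x)})$, which is the critical formula belonging to $\meps x {A'(x)} \equiv (\meps x {A(x)})[t/e]$.

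The whole difficulty is concentrated in making $[t/e]$ commute both with the binder~$\meps x$ and with the witness substitution $[s/x]$. This can fail in general: it is exactly the phenomenon flagged before the definition of the critical \et-term, where $A(y) \equiv B(\meps x {C(x,y)},y)$, $e \equiv \meps x {C(x,t)}$, and $C[s/e]$ is \emph{not} critical. My claim is that the rank hypothesis $\rk{e'} \le \rk{e}$ rules out precisely this. The key step is that no occurrence of~$e$ in~$C$ can arise by substituting into a proper \et-subterm of~$e'$ in which the bound variable~$x$ is free. Concretely, suppose some occurrence of~$e$ in $A(s)$ or in $A(\meps x {A(x)})$ were \emph{straddling}, i.e.\ its outermost ($\eps$- or \tau-) symbol came from a proper \et-subterm~$d$ of $A(x)$ with~$x$ free in~$d$, completed by the substituted term. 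Then~$d$ is subordinate to~$e'$, so $\rk{e'} \ge \rk{d}+1$; and since~$e$ is then a substitution instance of~$d$, the preceding rank-invariance lemma gives $\rk{e} = \rk{d}$, whence $\rk{e'} \ge \rk{e}+1 > \rk{e}$, contradicting the hypothesis. The same inequality excludes~$e$ occurring inside~$A(x)$ with~$x$ free in~$e$ (which would make~$e$ itself subordinate to~$e'$), and $e \not\equiv e'$ because~$e$ is not critical for~$C$.

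With these occurrences excluded, every occurrence of~$e$ in $A(s)$ and in $A(\meps x {A(x)})$ lies entirely within the skeleton~$A(x)$ (with~$x$ not free in~$e$) or entirely within a substituted copy of~$s$ (respectively of $\meps x {A(x)}$). Hence $[t/e]$ neither captures nor is captured by the bound~$x$, so it commutes with~$\meps x$, yielding $(\meps x {A(x)})[t/e] \equiv \meps x {A'(x)}$, and it commutes with the witness substitution, yielding the two displayed identities by the routine substitution lemma; the criticality of $C[t/e]$ then follows, and the \tau-case is obtained by the dual computation. The hard part will be the clean bookkeeping of exactly these occurrences — separating skeleton occurrences, occurrences inside the substituted term, and straddling ones — and verifying that the rank hypothesis, through the rank-invariance-under-substitution lemma together with the definition of subordination, excludes the last kind. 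Once that is in place, everything else is a direct calculation.
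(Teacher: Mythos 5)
Your proof is correct and follows essentially the same route as the paper's: both isolate the problematic ``straddling'' occurrences of~$e$ (where $e$ arises as a substitution instance $e'(s)$ of an \et-term subordinate to the critical term of~$C$) and rule them out by combining the rank hypothesis with rank-invariance under substitution and the definition of subordination, after which $[t/e]$ acts uniformly and criticality is preserved. Your explicit exclusion of the case where $e$ sits inside the matrix with the bound variable free in~$e$ is a small point the paper leaves implicit, but the argument is the same.
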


\begin{lem}\label{lem:rep-eps-lower} Suppose $e$ is an \et-term,
  $t$~any term, $C$~is a critical formula for which $e$ is not
  critical, $\rk C \le \rk e$, and if $\rk C = \rk e$ then $\deg C \le
  \deg e$. Then $\rk{C[t/e]} \le \rk e$, and if $\rk{C[t/e]} = \rk e$,
  then $\deg{C[t/e]} \le \deg e$
\end{lem}

The proofs of the preceding lemmas are as in
\cite{HilbertBernays1939}; see also \cite[\S5]{MoserZach2006} and
\cite[\S4.1]{Zach2017}.

\begin{prop}\label{prop:term-elim-seq}
  Suppose $\pi$ is an $\etl{\pl{\Lo}}$ proof of $E(u_1, \dots, u_n)$
  where $E(x_1, \dots, x_n)$ is \et-free. Suppose furthermore that for
  every $\etl{\pl{\Lo}}$ proof and critical \et-term~$e$ there is a
  complete $e$-elimination set. Then there is a complete \et-elimination
  sequence for~$\pi$.
\end{prop}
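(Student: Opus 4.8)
The plan is to follow Hilbert and Bernays and eliminate the critical $\et$-terms of $\pi$ one at a time in decreasing order of complexity, using at each stage a complete elimination set (which exists by hypothesis) to discard all critical formulas belonging to the chosen term. To drive the recursion I would attach to any $\etl{\pl{\Lo}}$-proof $\sigma$ the measure $\mu(\sigma) = (\rho, n)$, where $\rho$ is the maximal rank of a critical $\et$-term occurring in $\sigma$ (set $\rho = 0$ if there are none) and $n$ is the number of distinct critical $\et$-terms of rank $\rho$, ordered lexicographically. Starting from $\pi_1 = \pi$, $D_1 = E(u_1, \dots, u_n)$, and $\Gamma_1$ the set of all critical formulas of $\pi$, the goal is to show that a single elimination step strictly lowers $\mu$; since the measures lie in $\mathbb{N}^2$ under a well-founded order, finitely many steps reach a $\sigma$ with no critical formulas, i.e.\ $\Gamma_{k+1} = \emptyset$ and $\vdash_{\pl{\Lo}}^{\pi_{k+1}} D_{k+1}$, and the sequence of pairs constructed along the way is the desired complete $\et$-elimination sequence.

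At the generic stage we have $\pi_i$ with $\mu(\pi_i) = (\rho, n)$ and $n \ge 1$. The key choice is which term to remove: among all critical $\et$-terms of rank $\rho$ I pick one, $e_i = e$, of maximal degree $\delta$. Since any other rank-$\rho$ critical $\et$-term $e'$ has $\deg{e'} \le \delta$, and since $e$ being a proper subterm of $e'$ would force $\deg{e'} \ge \deg{e} + 1 > \delta$, the term $e$ occurs as a proper subterm of no rank-$\rho$ critical $\et$-term other than itself. Let $\Lambda(e)$ be all critical formulas belonging to $e$ and $\Gamma_i$ the remaining critical formulas; then $e$ is not critical for any formula in $\Gamma_i$, and each such formula has a critical $\et$-term of rank at most $\rho$ and, at rank $\rho$, of degree at most $\delta$. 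By hypothesis there is a complete $e$-elimination set $T_i$, and I set $\Gamma_{i+1} = \Gamma_i[T_i/e]$ and $D_{i+1} = \bigvee_{t \in T_i} D_i[t/e]$. The defining property of a complete elimination set gives $\Gamma_{i+1} \vdash_{\pl{\Lo}} D_{i+1}$ (clause~(5)); clause~(1) holds with $\Lambda'(e) = \emptyset$ because $\pi_i$ already proves $D_i$ from $\Gamma_i \cup \Lambda(e)$, and clauses~(2),~(3) hold by construction.

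It remains to verify clause~(4) and that $\mu$ strictly drops. Every $C \in \Gamma_i$ satisfies the hypotheses of Lemmas~\ref{lem:rep-eps-crit} and~\ref{lem:rep-eps-lower} relative to $e$, so each $C[t/e]$ is again a critical formula (clause~(4)) whose critical $\et$-term has rank at most $\rho$ and, at rank $\rho$, degree at most $\delta$. For the measure, first note that $e$ is gone: $\Lambda(e)$ has been discarded, and since $e$ occurs in no other rank-$\rho$ critical term, no $C[t/e]$ has $e$ as its critical $\et$-term. Second, no new rank-$\rho$ critical term appears. Indeed, by the case analysis in the proof of Lemma~\ref{lem:rep-eps-lower}, if $C$ has a rank-$\rho$ critical term then $e$ can occur only in the auxiliary term of $C$, leaving the critical term literally unchanged, while a critical term of rank below $\rho$ is carried to one of rank below $\rho$. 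Hence the rank-$\rho$ critical terms of $\pi_{i+1}$ are exactly those of $\pi_i$ with $e$ deleted, so either $n$ decreases by at least one or, if $n$ was $1$, $\rho$ itself decreases; in either case $\mu(\pi_{i+1}) < \mu(\pi_i)$.

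The one delicate point, and the step I expect to need the most care, is precisely this last claim that eliminating a term of maximal rank and maximal degree introduces no new critical $\et$-term of rank $\rho$. It is what Lemma~\ref{lem:rep-eps-lower} secures, and it is the reason the two-dimensional rank/degree ordering is needed rather than rank alone: without maximality of degree one could not rule out that $e$ is nested in another rank-$\rho$ term, whose substitution instance could then reappear as a fresh rank-$\rho$ critical $\et$-term and block termination. Everything else is the bookkeeping of assembling the pairs $\langle \Lambda(e_i), T_i\rangle$ and observing that the terminal proof, having no critical formulas, is a $\pl{\Lo}$-proof of $D_{k+1}$.
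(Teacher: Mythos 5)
Your proof follows the paper's argument essentially verbatim: the same choice of $e_i$ (maximal degree among critical \et-terms of maximal rank), the same appeal to Lemmas~\ref{lem:rep-eps-crit} and~\ref{lem:rep-eps-lower} to secure clause~(4) and to block new maximal-rank critical terms, and the same termination idea. The only cosmetic difference is that you use the coarser measure (maximal rank, number of maximal-rank critical terms) where the paper tracks the triple including maximal degree; given your choice of $e_i$, both orderings decrease, so the argument is correct.
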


\begin{proof}
  Take $\pi_0 = \pi$. Suppose $\pi_i$ has been defined. Let $e_i$ be a
  critical \et-term of $\pi_i$ of maximal degree among the critical
  terms of maximal rank. Let $\Lambda_i(e_i)$ be all critical
  formulas belonging to~$e_i$, and $\Gamma_i$ the remaining critical
  formulas. We have 
  \begin{align*}
    \Gamma_i, \Lambda_i(e_i) & \vdash_{\etl{\Lo}} D_i
\intertext{By assumption, there is a complete $e$-elimination set $T_i$
for~$\Lambda_i(e_i)$ and so we have $\pi_{i+1}$ showing that}
    \Gamma_i[T_i] & \vdash_{\etl{\Lo}} \bigvee_{t\in T_i} D_i[t/e_i]
  \end{align*}
  Each critical formula~$C$ in $\Gamma_i$ is not of higher rank
  than~$e_i$, and if it is of equal rank it is not of higher degree.
  So by Lemma~\ref{lem:rep-eps-crit}, $C[t/e_i]$ is a critical
  formula. Hence $\pi_{i+1}$ is a correct $\etl{\Lo}$-proof. Let
  $\Gamma_{i+1} = \Gamma_i[T_i]$ and $D_{i+1} \equiv \bigvee_{t\in
  T_i} D_i[t/e_i]$.

  Eventually, $\Gamma_i = \emptyset$, since in each step, by
  Lemma~\ref{lem:rep-eps-lower}, the maximal rank of critical
  \et-terms in $\Gamma_i$ does not increase, the maximal degree of
  critical \et-terms of maximal rank does not increase, and the number
  of critical \et-terms of maximal degree among those of maximal rank
  decreases.
\end{proof}

\begin{cor}\label{first-eps-cl}
  The extended first \et-theorem holds for~$\CL$.
\end{cor}

\begin{proof}
  By Proposition~\ref{prop:elimCL}, every critical \et-term has
  complete elimination sets. So by
  Proposition~\ref{prop:term-elim-seq}, there always is an elimination
  sequence. The extended first \et-theorem follows by
  Proposition~\ref{prop:first-et}.
\end{proof}

\begin{rem}\label{rem:speedup} The traditional procedure following the
  Hilbert-Bernays order, which eliminates all critical formulas
  belonging to a maximal \et-term together, is not the only possible
  procedure that guarantees termination. We pointed out in
  Remark~\ref{rem:one-by-one} that using
  Proposition~\ref{prop:elimCLsingle} for all critical formulas
  belonging to a single \et-term results in a larger disjunction than
  Proposition~\ref{prop:elimCL}. Despite this, the ability in the
  classical case to eliminate single critical formulas provides
  flexibility that can be exploited to produce smaller overall
  Herbrand disjunctions. As \citet[Theorem~3]{BaazLeitschLolic2018}
  show, there are sequences of \et-proofs where the original procedure
  produces Herbrand disjunctions that are non-elementarily larger than
  a more efficient elimination order.
\end{rem}

\begin{thm}\label{et-neg-KC} The extended first \et-theorem holds for
  negated formulas in any~$\etl{\Lo}$ such that $\proves{\Lo} \J$,
  e.g., $\etl\KC$ and~$\etl\LC$.
\end{thm}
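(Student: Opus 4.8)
The plan is to run the Hilbert--Bernays elimination procedure exactly as in the proof of Proposition~\ref{prop:term-elim-seq} and then read off a Herbrand disjunction via Proposition~\ref{prop:first-et}. The only ingredient not already available is the existence of complete $e$-elimination sets. Proposition~\ref{prop:elimJ} supplies these whenever the end-formula of the proof is a \emph{single} negation $\lnot D$. The difficulty is that the procedure does not preserve this shape: after eliminating a critical term $e_i$, the new end-formula is $D_{i+1} = \bigvee_{t\in T_i} D_i[t/e_i]$, a disjunction of substitution instances of negations rather than a single negation. So first I would check that Proposition~\ref{prop:elimJ} generalizes to proofs whose end-formula is a disjunction of negations.

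The key observation is that, because $\proves{\Lo}\J$, every disjunction of negations is in $\Lo$ equivalent to a single negation, and hence stable. Indeed, $\J$ yields the De Morgan law $\lnot(C_1 \land \dots \land C_r) \lif (\lnot C_1 \lor \dots \lor \lnot C_r)$, so that $\proves{\Lo} (\lnot D_1 \lor \dots \lor \lnot D_r) \liff \lnot(D_1 \land \dots \land D_r)$, the converse direction being already intuitionistic. Consequently any $F \equiv \bigvee_j \lnot D_j$ satisfies $\proves{\Lo} \lnot\lnot F \lif F$. The single place where Proposition~\ref{prop:elimJ} uses that the end-formula is negated is the intuitionistic step $B \lif \lnot C \vdash \lnot\lnot B \lif \lnot C$; for a stable $F$ this becomes $B \lif F \vdash \lnot\lnot B \lif F$, which holds because $B \lif F$ gives $\lnot\lnot B \lif \lnot\lnot F$ and $\lnot\lnot F \lif F$ by stability. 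Re-running the argument of Proposition~\ref{prop:elimJ} verbatim with this replacement (the remaining uses of $\J$, for the weak excluded middle disjunction $(\bigwedge_i \lnot A(s_i)) \lor \bigvee_i \lnot\lnot A(s_i)$, are exactly as before) shows that every critical \et-term in an $\etl{\Lo}$-proof of a disjunction of negations has a complete $e$-elimination set.

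With this in hand I would assemble the proof as follows. Starting from a proof $\pi$ of the negated end-formula $E \equiv \lnot D$, note that the class of disjunctions of negations is closed under substitution and under forming the disjunctions $\bigvee_{t\in T_i}$, so by induction every end-formula $D_i$ arising in the procedure is again a disjunction of negations. Hence the generalized elimination result applies at each stage, which is precisely what the construction in the proof of Proposition~\ref{prop:term-elim-seq} needs. The rest of that argument is unchanged: selecting a critical \et-term of maximal degree among those of maximal rank, Lemma~\ref{lem:rep-eps-crit} guarantees that the substituted sets $\Gamma_i[T_i]$ are again critical formulas, and Lemma~\ref{lem:rep-eps-lower} guarantees termination. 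This produces a complete \et-elimination sequence for $\pi$, and Proposition~\ref{prop:first-et} then yields the required disjunction of instances of $E$. The two examples follow because $\KC = \IL + \J$ by definition and because $\lnot A \lor \lnot\lnot A$ is valid in every G\"odel logic, so in particular $\proves{\LC}\J$. I expect the main obstacle to be exactly this non-preservation of the single-negation shape; once one recognizes that $\J$ makes disjunctions of negations stable, the remainder is a routine adaptation of the machinery already established.
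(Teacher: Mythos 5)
Your proposal is correct and follows the same overall route as the paper: obtain complete $e$-elimination sets from Proposition~\ref{prop:elimJ}, feed them into the machinery of Propositions~\ref{prop:term-elim-seq} and~\ref{prop:first-et}, and observe that $\proves{\LC}\J$ (the paper derives $\J$ from the \LIN{} instance $(A\lif\lnot A)\lor(\lnot A\lif A)$, which is equivalent to your appeal to validity in G\"odel logics). The difference is that the paper's proof is a single sentence citing Proposition~\ref{prop:elimJ} ``for negated end-formulas,'' whereas you have explicitly identified and repaired the point it passes over: after one elimination step the end-formula is $\bigvee_{t\in T_i}\lnot D[t/e_i]$, a disjunction of negations rather than a single negation, so Proposition~\ref{prop:elimJ} as literally stated no longer applies. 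Your fix --- that $\J$ yields the De~Morgan law, hence every disjunction of negations is $\Lo$-equivalent to a negation and in particular stable, and that stability is all the step $B\lif\lnot C\vdash\lnot\lnot B\lif\lnot C$ actually uses --- is exactly right, and the closure of this class of formulas under the substitutions and disjunctions generated by the procedure is as you say. So your write-up is not a different proof but a more careful one; it makes explicit an invariant (``the end-formula remains a disjunction of negations, hence stable'') that the published argument leaves implicit.
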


\begin{proof}
  $\Lo$ has complete $e$-elimination sets for end-formulas that are
  disjunctions of negated formulas, by Proposition~\ref{prop:elimJ}.
  Note that~$\J$, i.e., $\lnot A \lor \lnot\lnot A$, follows
  intuitionistically from $(A \lif \lnot A) \lor (\lnot A \lif A)$,
  which is an instance of~$\LIN$. So $\proves{\LC} \J$.  
\end{proof}

\begin{thm}\label{thm:et-Gm}
  The extended first \et-theorem holds for~$\etl{\LC_m}$.
\end{thm}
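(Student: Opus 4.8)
The plan is to assemble the theorem by chaining the three principal results established above, exactly as was done for $\CL$ immediately above. The only feature of $\LC_m$ that is needed is that $\LC_m \vdash \Bm m$, which holds by definition since $\LC_m = \LC + \Bm m$. Everything else follows from the provability of a single instance of the schema $\Bm m$, so the argument is entirely uniform: the specific finite-valued G\"odel logic never enters beyond this one fact.

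Given $\LC_m \vdash \Bm m$, I would first invoke Theorem~\ref{thm:elim-Gm} to conclude that in $\etl{\LC_m}$ every critical \et-term has a complete $e$-elimination set. This discharges the first of the three challenges identified in Section~\ref{sec:elim-sets}. I would then apply Proposition~\ref{prop:term-elim-seq}: for any $\etl{\LC_m}$-proof $\pi$ of $E(u_1, \dots, u_n)$ with $E(x_1, \dots, x_n)$ \et-free, the existence of complete $e$-elimination sets guarantees a complete \et-elimination sequence for~$\pi$. This step silently handles the remaining two challenges, since the Hilbert--Bernays ordering by rank and degree used in that proposition ensures both that the substituted critical-formula sets $\Gamma[T_i]$ remain genuine critical formulas (Lemma~\ref{lem:rep-eps-crit}) and that the elimination process terminates (Lemma~\ref{lem:rep-eps-lower}). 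Finally, Proposition~\ref{prop:first-et} converts the elimination sequence into a Herbrand disjunction, yielding tuples $t_{i1}, \dots, t_{in}$ with $\proves{\LC_m} \bigvee_{i=1}^{l} E(t_{i1}, \dots, t_{in})$, which is exactly the conclusion demanded by the first \et-theorem.

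I do not expect a genuine obstacle, since the theorem is the intended payoff of the preceding development and its proof is essentially a citation chain. The one point that warrants care is bookkeeping: the definition of the first \et-theorem is phrased for a formula $A(e_1, \dots, e_n)$ with \et-terms $e_i$, whereas Proposition~\ref{prop:first-et} is phrased for an \et-free $E$ with term arguments. These line up once one reads the \et-terms $e_i$ as the substituends into the \et-free matrix, so the two formulations coincide and no additional work is required.
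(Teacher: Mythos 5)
Your proposal is correct and follows exactly the paper's route: the paper proves this theorem simply by citing Theorem~\ref{thm:elim-Gm}, with the chain through Proposition~\ref{prop:term-elim-seq} and Proposition~\ref{prop:first-et} left implicit (it is the same chain spelled out for the classical corollary just above). Your explicit unpacking, including the remark that only $\LC_m \vdash \Bm m$ is used, matches the intended argument.
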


\begin{proof}
  By Theorem~\ref{thm:elim-Gm}, every critical \et-term has complete
  elimination sets. The extended first \et-theorem follows by
  Propositions~\ref{prop:term-elim-seq} and~\ref{prop:first-et}.
\end{proof}

\begin{thm}\label{thm:weak-eps-LC} The first \et-theorem holds in
  $\etl\LC$, i.e., if $\proves{\etl\LC} D$ for an $\et$-free
  formula~$D$, then $\proves{\LC} D$.
\end{thm}

\begin{proof}
  If $\proves{\etl{\LC}} D$ then also $\proves{\etl{\LC_m}} D$. Since
  $D$~is \et-free, $\proves{\LC_m} D$ by Theorem~\ref{thm:et-Gm}. In
  general $\proves{\LC} D$ iff $\proves{\LC_m} D$ for all~$m$, so the
  claim follows.
\end{proof}

We are indebted to the referee for the \textsc{Journal} for the
following observation:

\begin{prop}\label{neg-IL-first-eps}
  The first \et-theorem holds for negated formulas in $\etl{\Lo}$ for
  any intermediate logic~$\Lo$, including~$\IL$:
  if $\proves{\etl\Lo} \lnot D$ for an $\et$-free
  formula~$D$, then $\proves{\Lo} D$.
\end{prop}

\begin{proof}
  If $\proves{\etl\Lo} \lnot D$ then also $\proves{\etl\CL} \lnot D$.
  By the first \eps-theorem for~$\CL$ (Corollary~\ref{first-eps-cl}),
  $\proves{\CL} \lnot D$, and by Glivenko's Theorem, $\proves{\IL}
  \lnot D$.
\end{proof}

\section{Herbrand's Theorem and the Second Epsilon-Theorem}
\label{sec:herbrand}

The extended first \et-theorem implies Herbrand's theorem for purely
existential formulas. If $E \equiv \exists x_1\ldots \exists x_n
E'(x_1, \ldots, x_n)$ is provable in predicate logic, then so is
$E^\et \equiv E'(e_1, \ldots, e_n)$ for some \eps-terms $e_1$,
\dots,~$e_n$. From the extended first \eps-theorem we then obtain a
proof in propositional logic of a Herbrand disjunction
\[
E'(t_{1}^1, \ldots, t_{n}^1) \lor \ldots \lor A(t_{1}^k, \ldots, t_{n}^k)
\]
for some terms~$t_{i}^j$. In predicate logic, we may now successively
introduce existential quantifiers to obtain the original formula~$E$.
This holds in any intermediate predicate logic in which the first
\et-theorem holds, since the only principles used in the last step
(proving $E$ from its Herbrand disjunction) are $A(t) \lif \exists x
\,A(x)$ and $\exists x(A(x) \lor B) \lif (\exists x\,A(x) \lor B)$,
($x$ not free in~$B$) which already hold in intuitionistic logic.
(Despite the failure of the extended first \et-theorem in $\etl{\LC}$, the
Herbrand theorem for existential formulas does hold in $\ql\LC$; see
\citealt{Aschieri2017}).

For classical predicate logic, the Herbrand theorem for existential
formulas implies the Herbrand theorem for prenex formulas. If a prenex
formula~$E$ has a proof in first-order logic, so does its (purely
existential) \emph{Herbrand form} $H(E) = \exists x_1\ldots \exists
x_n E'(x_1, \ldots, x_n)$. In classical predicate logic, we can obtain
not just $H(E)$ from its Herbrand disjunction, but also the original
prenex formula~$E$. This requires that we introduce not just
existential quantifiers, but also universal quantifiers. Consequently,
we need not just the generalization rule $A(x) / \forall x\,A(x)$ but
also a principle that allows us to shift universal quantifiers over
disjunctions, viz.,
\begin{equation*}
  \forall x(A(x) \lor B) \lif (\forall x\,A(x) \lor B). \tag{\CD}
\end{equation*}
This principle is not intuitionistically valid: it characterizes
Kripke frames with constant domains. Since the implication $E \lif
H(E)$ already holds in intuitionistic logic, any intermediate
predicate logic in which~$\CD$ holds and which has the extended first
\eps-theorem also has Herbrand's theorem for prenex formulas:

\begin{prop}
  Suppose ${\ql{\Lo} + \Ax}$ proves~$\CD$ and $\etl{\Lo}$ has the
  extended first \et-theorem. Then $\ql{\Lo} + \Ax$ has Herbrand's
  theorem for prenex formulas.
\end{prop}

Although the extended first \eps-theorem implies Herbrand's theorem
for prenex formulas if $\CD$ is provable, the converse is not true. As
we showed in Theorem~\ref{thm:forking}, the extended first \et-theorem
holds for $\etl\Lo$ only if $\proves{\Lo}\Bm m$ for some~$m$. In
particular, it does not hold for infinite-valued first-order G\"odel
logics~$\G_\Real = \ql{\LC} + \CD$. However, Herbrand's theorem for
prenex formulas does hold for~$\G_\Real$; see
\cite[Theorem~5.7]{BaazPreiningZach2003} and
\cite[Theorem~7.8]{BaazPreiningZach2007}. (Incidentally, the Herbrand
theorem for prenex formulas also holds in intuitionistic logic despite
the invalidity of~$\CD$; see \citealt{Bowen1976}).

In classical logic, the second \eps-theorem can be proved using the
extended first  \eps-theorem as follows: Suppose $\proves{\etl{\CL}}
A^\et$. Since $A$ is equivalent to a prenex formula $A^p$ in classical
logic, we have $\proves{\ql{\CL}} A \lif A^p$. Prenex formulas imply
their Herbrand forms, i.e., $\proves{\ql{\CL}} A^p \lif H(A^p)$.
Together we have $\proves{\ql{\CL}} A \lif H(A^p)$ and by translating
into the \et-calculus, $\proves{\etl{\CL}} A^\et \lif H(A^p)^\et$, so
$\proves{\etl{\CL}} H(A^p)^\et$. By the extended first \et-theorem,
$H(A^p)$ has a Herbrand disjunction, from which (in $\ql{\CL}$) we can
prove $A^p$ and hence~$A$.

The steps that may fail in an intermediate prediate logic~$\ql{\Lo} +
\Ax$, other than the extended first \et-theorem, are the provability
of $A \liff A^p$ and proving $A^p$ from the Herbrand disjunction of
$H(A^p)$. These steps do work provided all quantifier shifts can be
can be carried out (i.e., in addition to $\CD$ also the formulas
$\Aout$ and~$\Eout$). Thus, if $\etl{\Lo}$ has the extended first
\et-theorem, any intermediate predicate logic~$\ql{\Lo} + \Ax$ in which all
quantifier shifts are provable also has the second \et-theorem:

\begin{prop}\label{prop:second} Suppose $\ql{\Lo} + \Ax$ proves $\CD$,
  $\Eout$, and $\Aout$, and $\etl\Lo$ has the extended first
  \et-theorem. Then $\ql{\Lo} + \Ax$ has the second \et-theorem, i.e.,
  if $\proves{\etl{\Lo}} A^\et$ then $\proves{\ql{\Lo}+\Ax} A$.
\end{prop}

\begin{proof}
  Suppose $\proves{\etl{\Lo}} A^\et$. The \et-calculus proves
  \et-translations of all quantifier shifts, so $\proves{\etl{\Lo}}
  (A^p)^\et$, and since $\proves{\ql{\IL}} A^p \lif H(A^p)$ also
  $\proves{\etl{\Lo}} (H(A^p))^\et$. By the extended first \et-theorem
  for~$\etl\Lo$, there is a Herbrand disjunction~$A'$ of~$H(A^p)$ so that
  $\proves{\Lo} A'$. Since $\proves{\ql{\Lo}+\Ax} \CD$,
  $\proves{\ql{\Lo}+\Ax} A^p$. Since $\proves{\ql{\Lo}+\Ax} \Eout$ and
  $\proves{\ql{\Lo}+\Ax} \Aout$, also $\proves{\ql{\Lo}+\Ax} A^p \lif
  A$, and so $\proves{\ql{\Lo}+\Ax} A$.
\end{proof}

Infinite-valued first-order G\"odel logic~$\G_\Real = \ql{\LC} + \CD$
is an intermediate prediate logic which proves~$\CD$ but not~$\Aout$
or~$\Eout$, and the extended first \et-theorem does not hold
for~$\etl\LC$. The logics of linear Kripke frames with $m$~worlds (and
varying domains) are~$\ql{\LC_m}$. But $\ql{\LC_m}$ does not
prove~$\CD$, so it also does not have the second \et-theorem:

\begin{prop}\label{prop:no-second}
  $\G_\Real = \ql{\LC} + \CD$ and $\ql{\LC_m}$ do not have the second \et-theorem.
\end{prop}

\begin{proof}
  We have $\proves{\etl{\LC}}{\Aout}$ but ${\ql{\LC} +
  \CD}\nvdash {\Aout}$, and $\proves{\etl{\LC_m}}{\CD^\et}$ but
  ${\ql{\LC_m}} \nvdash \CD$.
\end{proof}

However, we have:

\begin{cor}\label{second-gm}
  The second \et-theorem holds for finite-valued first-order G\"odel
  logics~$\G_m = \ql{\LC_m} + \CD$. 
\end{cor}

\begin{proof}
  By Proposition~\ref{prop:second}, as $\G_m$ proves $\CD$, $\Eout$,
  and~$\Aout$ and has the extended first \et-theorem.
\end{proof}

Herbrand's theorem also yield other results, for instance the
following.

\begin{prop}
  Suppose $\ql\Lo +\Ax_1 \subseteq \ql\Lo + \Ax_2$ are intermediate
  predicate logics. If Herbrand's theorem holds in $\ql\Lo + \Ax_2$
  for existential formulas~$A$, then $\proves{\ql\Lo+\Ax_1} A$ iff
  $\proves{\ql\Lo+\Ax_2} A$. The result also holds for prenex
  formulas~$A$ if $\proves{\ql\Lo+\Ax_1} \CD$. 
\end{prop}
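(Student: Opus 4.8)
The plan is to exploit that a Herbrand disjunction is quantifier-free, so that its provability is decided entirely by the common propositional fragment~$\Lo$. The direction from $\proves{\ql\Lo_1} A$ to $\proves{\ql\Lo_2} A$ is immediate from $\ql\Lo_1 \subseteq \ql\Lo_2$, so I would only argue the converse.

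For existential $A \equiv \exists x_1\ldots\exists x_n\,E'(x_1,\ldots,x_n)$ with $\proves{\ql\Lo_2} A$, I would first invoke Herbrand's theorem in $\ql\Lo_2$ to obtain terms $t_{ij}$ for which the Herbrand disjunction $\bigvee_{i} E'(t_{1i},\ldots,t_{ni})$ is provable in the elementary calculus of free variables over~$\Lo$. Since this disjunction is quantifier-free and $\ql\Lo_1$ has the \emph{same} propositional fragment~$\Lo$, it is provable in $\ql\Lo_1$ as well. I would then recover $A$ from it using only $A(t)\lif\exists x\,A(x)$ and the shift $\exists x(A(x)\lor B)\lif(\exists x\,A(x)\lor B)$ ($x$ not free in~$B$), both intuitionistically valid and hence available in $\ql\Lo_1$; this gives $\proves{\ql\Lo_1} A$.

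For prenex $A$, I would reduce to the existential case via the Herbrand form. Since $A\lif H(A)$ holds already in $\ql\IL$, from $\proves{\ql\Lo_2} A$ I get $\proves{\ql\Lo_2} H(A)$, and $H(A)$ is existential. Applying the previous step to $H(A)$ yields a Herbrand disjunction provable in~$\Lo$, hence in $\ql\Lo_1$. The extra work is recovering the full prenex $A$ rather than merely $H(A)$: this requires reintroducing the universal quantifiers of the prefix, which in turn needs the generalization rule together with a principle shifting universal quantifiers out of disjunctions, namely~$\CD$. As $\proves{\ql\Lo_1}\CD$ by hypothesis and generalization is a rule of every intermediate predicate logic, these steps are legitimate in $\ql\Lo_1$, and I obtain $\proves{\ql\Lo_1} A$.

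The step I expect to be the crux is the claim that the Herbrand disjunction delivered by Herbrand's theorem is provable \emph{propositionally}, i.e., in~$\Lo$, and not merely in $\ql\Lo_2$ with auxiliary quantifier reasoning. This is precisely what makes the shared fragment~$\Lo$ do its work and transfer the disjunction's provability from $\ql\Lo_2$ to $\ql\Lo_1$; I would therefore make sure the version of Herbrand's theorem being assumed has exactly this quantifier-free conclusion. The remaining recovery steps are routine, being intuitionistic in the existential case and needing only~$\CD$ in the prenex case.
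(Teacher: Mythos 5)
Your proposal is correct and follows essentially the same route as the paper's proof: both directions use the shared propositional fragment~$\Lo$ to transfer the quantifier-free Herbrand disjunction from $\ql\Lo_2$ to $\ql\Lo_1$, recover existential $A$ by intuitionistically valid inferences, and handle the prenex case via the Herbrand form $H(A)$ together with~$\CD$. The only difference is that you spell out explicitly which intuitionistic principles effect the recovery, which the paper states earlier in the same section and leaves implicit in the proof itself.
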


\begin{proof}
  The ``only if'' direction is trivial since $\ql\Lo+\Ax_1 \subseteq
  \ql\Lo+\Ax_2$. For the ``if'' direction, assume
  $\proves{\ql\Lo+\Ax_2} A$. Then there is a Herbrand disjunction $A'$
  of~$A$ provable in~$\Lo$ and hence in~$\ql\Lo+\Ax_1$. Since
  $\proves{\ql{\IL}} A' \lif A$, also $\proves{\ql\Lo+\Ax_1} A$. 
  
  Now suppose in addition that $\proves{\ql\Lo+\Ax_1} \CD$ and
  $\proves{\ql\Lo+\Ax_2} A$ with $A$ prenex. Then since
  $\proves{\ql{\IL}} A \lif H(A)$, $\proves{\ql\Lo+\Ax_2} H(A)$. By
  Herbrand's theorem there is a Herbrand disjunction~$A'$ and
  $\proves{\Lo} A'$. From $A'$, $\ql\Lo+\Ax_1$ proves~$A$, using just
  intuitionistically valid inferences as well as~$\CD$.
\end{proof}

\begin{cor}
  The existential fragments of $\ql{\LC}$ and~$\G_\Real$ agree.
\end{cor}

\begin{proof}
  Take $\Lo = \LC$, $\Ax_1 = \emptyset$, and $\Ax_2 = \CD$.
  Infinite-valued G\"odel logic $\G_\Real = \ql\LC+\CD$ has Herbrand's
  theorem.
\end{proof}

Whenever the extended first \et-theorem holds for~$\etl{\Lo}$,
Herbrand's theorem for existential formulas also holds for any
intermediate predicate logic containing~$\Lo$. So, whenever an
intermediate predicate logic $\ql{\Lo} +\Ax_1 \subseteq \ql{\Lo}
+\Ax_2$, and $\etl{\Lo}$ has the extended first \et-theorem, the
purely existential fragments of $\ql{\Lo} +\Ax_1$ and $\ql{\Lo}
+\Ax_2$ agree. For instance:

\begin{cor}
The existential fragments of $\ql{\LC_m}$ and $\G_m = \ql{\LC_m} +
\CD$ agree. 
\end{cor}

By similar reasoning, the result holds for formulas of the form
$\exists \vec x\, \lnot A(\vec x)$ for $\KC$ and its extensions, since
$\KC$ has the extended first \et-theorem for negated formulas
(Theorem~\ref{et-neg-KC}).  By
Proposition~\ref{prop:eps-thm-versions}, the result can also be
extended to formulas of the form $\forall \vec y\, B(\vec y) \lif
\exists \vec x\,A(\vec x)$.

\section{Elimination of Critical Formulas using \LIN}
\label{sec:LIN} 

In the classical case, the first \eps-theorem is obtained by
successively eliminating critical formulas belonging to a single
\eps-term using excluded middle. In intermediate logics this is not
available, but as we have seen in the proof of
Theorem~\ref{thm:elim-Gm}, critical formulas can also be eliminated
using $\Bm m$ and \LIN. And in fact, if a critical \et-term $e$ has
only predicative critical formulas then it has a complete
$e$-elimination set (by Lemma~\ref{lem:pred-lin}) already in $\LC$,
since only \LIN{} is required to eliminate predicative critical
formulas. Thus, the procedure of the first \et-theorem terminates for
all \et-proofs in which no impredicative critical formulas occurs
during the successive elimination of critical formulas.

\begin{prop}
  If $\proves{\etl{\LC}} E(u_1,\dots,u_n)$ and there is an elimination
  sequence $\langle \pi_i, \Lambda_i(e_i), T_i\rangle$ in which each formula
  in $\Lambda_i(e_i)$ is predicative, then $\proves{\LC} \bigvee
  E(t_{i1}, \dots, t_{in})$.
\end{prop}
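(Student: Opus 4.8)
The plan is to observe that, under the stated hypothesis, the given sequence is a genuine $\et$-elimination sequence \emph{over $\LC$}, and then to read off the conclusion from Proposition~\ref{prop:first-et} instantiated with $\Lo = \LC$. The point is that the definition of an $\et$-elimination sequence requires each $T_i$ to be an $e_i$-elimination set in the base logic, and the only thing that could fail over $\LC$ (as opposed to some $\LC_m$) is the availability of such an elimination set; this is exactly what the predicativity assumption will supply.

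First I would take the sequence in the Hilbert--Bernays form of Proposition~\ref{prop:term-elim-seq}, where at each stage $\Lambda_i(e_i)$ consists of \emph{all} critical formulas belonging to a maximal $\et$-term $e_i$. Since by hypothesis every formula in $\Lambda_i(e_i)$ is predicative, $e_i$ has no impredicative critical formulas, so Lemma~\ref{lem:pred-lin} applies and a complete $e_i$-elimination set exists whose defining derivability uses only $\LIN$. As $\proves{\LC} \LIN$, each elimination step is valid in $\LC$ and appeals to no instance of $\Bm m$. This is the decisive use of the hypothesis.

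The two remaining requirements of an elimination sequence are inherited verbatim from the finite-valued construction. Because the $e_i$ are chosen maximal in the rank/degree order, Lemma~\ref{lem:rep-eps-crit} guarantees that the substituted formulas $\Gamma_i[T_i]$ are again critical formulas, and the descent argument of Proposition~\ref{prop:term-elim-seq} guarantees that the process halts with $\Gamma_{k+1} = \emptyset$. Hence the sequence is a bona fide $\et$-elimination sequence for the proof of $E(u_1,\dots,u_n)$, and since $E(x_1,\dots,x_n)$ is $\et$-free, Proposition~\ref{prop:first-et} produces tuples $t_{ij}$ with $\proves{\LC} \bigvee_i E(t_{i1},\dots,t_{in})$.

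The hard part is conceptual rather than computational: one must be sure that proceeding in the prescribed order never forces the elimination of an \emph{impredicative} critical formula, for such a step would reintroduce a genuine need for some $\Bm m$ and leave $\LC$. The hypothesis that each $\Lambda_i(e_i)$ is predicative is precisely the guarantee that this never happens, so all the substantive content reduces to Lemma~\ref{lem:pred-lin}, with termination and critical-formula-preservation being the already-available machinery of Propositions~\ref{prop:term-elim-seq} and~\ref{prop:first-et}.
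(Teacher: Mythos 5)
Your core argument is the paper's own: the predicativity of each $\Lambda_i(e_i)$ means Lemma~\ref{lem:pred-lin} supplies the required $e_i$-elimination sets using only $\LIN$, so every step of the sequence goes through in $\LC$ rather than some $\LC_m$, and Proposition~\ref{prop:first-et} then yields the Herbrand disjunction. So the key lemma and the overall shape match.

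There is, however, one genuine misstep. You discard the given sequence and replace it by the Hilbert--Bernays-ordered one from Proposition~\ref{prop:term-elim-seq} (always eliminating \emph{all} critical formulas of a maximal-rank/degree term), and then apply the predicativity hypothesis to this \emph{new} sequence. The hypothesis only asserts predicativity for the sequence you were handed; it does not transfer to a re-ordered one. Indeed, the example the paper gives immediately after this proposition --- the critical formulas $A(f(e_B)) \lif A(e_A)$ and $B(g(e_A)) \lif B(e_B)$ --- shows that changing which term is eliminated first can turn a predicative critical formula into an impredicative one, so the Hilbert--Bernays sequence for the same proof may fail the hypothesis even when some other sequence satisfies it. The repair is simply to not reorder: the definition of an \et-elimination sequence already builds in that each $\Gamma_i[T_i]$ consists of critical formulas, that each $\pi_{i+1}$ is a correct \et-proof, and that $\Gamma_{k+1} = \emptyset$, so the appeals to Lemma~\ref{lem:rep-eps-crit} and to the termination argument of Proposition~\ref{prop:term-elim-seq} are unnecessary. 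Working with the given sequence, predicativity of each $\Lambda_i(e_i)$ plus Lemma~\ref{lem:pred-lin} certifies each step over $\LC$, and Proposition~\ref{prop:first-et} finishes the proof. (One further point worth noting, which the paper itself elides: Lemma~\ref{lem:pred-lin} as stated assumes there are \emph{no} impredicative critical formulas belonging to $e_i$ at that stage, i.e.\ that $\Lambda_i(e_i)$ is the complete set; if $\Lambda'(e_i) \neq \emptyset$ one should check that the remaining critical formulas for $e_i$ are also predicative, or that the lemma extends to partial elimination.)
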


\begin{proof}
  By Proposition~\ref{prop:first-et}, since if in each step of the
  elimination sequence the eliminated critical formulas are all
  predicative, the elimination already works in~\LC{} by
  Lemma~\ref{lem:pred-lin}.
\end{proof}

So if there is a way to select critical \et-terms~$e_i$ successively
for elimination in such way that the critical formulas belonging
to~$e_i$ are always predicative, the extended first \et-theorem holds in
$\etl{\LC}$ for a particular proof~$\pi$. However, it is hard to
determine just by inspecting~$\pi$ if this is possible. For one, it is
not sufficient that the critical formulas in $\pi$ itself are all
predicative: eliminating the critical formulas belonging to one
critical \et-term may turn a remaining predicative critical formula
into an impredicative one. For instance, consider
\begin{align*}
  E(x, y) & \equiv (A(f(y)) \lif A(x)) \land (B(g(x)) \lif B(y))\\
  e_A & \equiv \meps x A(x)\\
  e_B & \equiv \meps y B(y)
  \intertext{Then $D(e_A, e_B)$ has an $\etl{\LC}$ proof, since 
  it is the conjunction of the critical formulas}
  & A(f(e_B)) \lif A(e_A)\\
  & B(g(e_A)) \lif B(e_B)
  \intertext{which are both predicative. If we first eliminate $e_A$ we would replace $e_A$ by $f(e_B)$ in the second, resulting in}
  & B(g(f(e_B)) \lif B(e_B)
  \intertext{which is impredicative. Similarly, eliminating $e_B$ leaves the impredicative}
  & A(f(g(e_A)) \lif A(e_A).
\end{align*}
So no elimination sequence resulting in only predicative critical
formulas at every step is possible.

Of course, if the term $t$ in a critical formula $A(t) \lif A(e)$
contains no \et-term at all, it is predicative, and replacing some
\et-term $e'$ in it by a term~$t'$ cannot result in an impredicative
critical formula. Let us call such critical formulas~\emph{weak}.

\begin{defn}
  A critical formula $A(t) \lif A(\meps x A(x))$ resp.\ $A(\mtau
    xA(x)) \lif A(t)$ is \emph{weak} in $\pi$ if $t$ does
  not contain any critical $\eps$- or $\tau$-term of~$\pi$.
\end{defn}

If the critical formulas in $\pi$ are all weak, there is an
elimination sequence.

\begin{prop}
  Suppose $\proves{\etl{\LC}} E(e_1, \dots, e_n)$ with a proof in
  which all critical formulas are weak. Then there are terms~$t_i^j$
  such that $\proves{\LC} \bigvee_j E(t_1^j, \dots, t_n^j)$.
\end{prop}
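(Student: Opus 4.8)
The plan is to construct a complete \et-elimination sequence for~$\pi$ and then invoke Proposition~\ref{prop:first-et} to read off a Herbrand disjunction provable in~$\LC$. As in the general case (Proposition~\ref{prop:term-elim-seq}), I would run the Hilbert--Bernays procedure: at each stage pick a critical \et-term~$e_i$ of maximal degree among those of maximal rank, let $\Lambda_i(e_i)$ be \emph{all} critical formulas belonging to~$e_i$, and eliminate them. The only thing distinguishing the present situation from the classical case (Proposition~\ref{prop:elimCL}) or the $\Bm m$ case (Theorem~\ref{thm:elim-Gm}) is which propositional resources are needed to produce the complete $e_i$-elimination set. Since $\LC \vdash \LIN$, Lemma~\ref{lem:pred-lin} furnishes such a set \emph{provided} the critical formulas belonging to~$e_i$ are all predicative. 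So the whole argument reduces to showing that, throughout the elimination, every critical formula remains predicative.

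This is where weakness is used. The invariant I would maintain is: at every stage~$\pi_i$, all critical formulas are weak. This holds at $\pi_0 = \pi$ by hypothesis, and a weak critical formula is automatically predicative, since its critical \et-term~$e$ is itself a critical \et-term of the proof and hence does not occur in the antecedent term~$t$. For preservation, note that the antecedent term~$t$ of a weak critical formula contains no critical \et-term, so the substitution $[s/e_i]$ used in the elimination step (which replaces only the critical \et-term~$e_i$) leaves~$t$ literally unchanged; only the consequent's critical \et-term is affected, and by Lemma~\ref{lem:rep-eps-crit} the result is again a critical formula. It then remains to check that~$t$ contains no critical \et-term of the \emph{new} proof~$\pi_{i+1}$. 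The critical \et-terms of $\pi_{i+1}$ are substitution instances $d[s/e_i]$ of critical \et-terms~$d \neq e_i$ of~$\pi_i$, with $s$ drawn from the elimination set, which by Lemma~\ref{lem:pred-lin} consists of the (weak, and hence free of critical \et-terms) antecedent terms of the critical formulas belonging to~$e_i$. Using the maximality of~$e_i$ in the rank/degree ordering---which by the analysis behind Lemmas~\ref{lem:rep-eps-crit} and~\ref{lem:rep-eps-lower} forbids $e_i$ from being subordinate to any~$d$---one argues that no such instance $d[s/e_i]$ can occur in the unchanged term~$t$, so the transformed critical formula is again weak.

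Given the invariant, Lemma~\ref{lem:pred-lin} applies at every stage and all elimination sets are computed inside~$\LC$. Correctness (that each $\Gamma_i[T_i]$ again consists of critical formulas) and termination then follow exactly as in Proposition~\ref{prop:term-elim-seq}, via Lemmas~\ref{lem:rep-eps-crit} and~\ref{lem:rep-eps-lower}: the maximal rank never increases, the maximal degree among terms of maximal rank never increases, and the number of critical \et-terms of maximal degree among those of maximal rank strictly decreases. This yields a complete \et-elimination sequence, and Proposition~\ref{prop:first-et} delivers terms $t_i^j$ with $\proves{\LC}\bigvee_j E(t_1^j,\dots,t_n^j)$. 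I expect the main obstacle to be the preservation of weakness---specifically, ruling out that a substitution instance of some lower-rank critical \et-term reappears as a subterm of an otherwise untouched antecedent~$t$; the rank/degree bookkeeping of the Hilbert--Bernays ordering is exactly what is needed to exclude this.
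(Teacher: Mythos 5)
Your proof follows essentially the same route as the paper's: pick a critical \et-term maximal in the Hilbert--Bernays rank/degree ordering, observe that weakness makes all of its critical formulas predicative so that Lemma~\ref{lem:pred-lin} (via \LIN) supplies a complete $e$-elimination set inside~\LC, and iterate to an elimination sequence feeding Proposition~\ref{prop:first-et}. The only difference is one of detail: the paper simply asserts that the remaining weak critical formulas do not contain~$e$, so $\Gamma[T]=\Gamma$, and defers to ``the same inductive proof as the first \et-theorem,'' whereas you spell out the preservation of weakness under the substitution step using the same rank/degree bookkeeping.
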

  
\begin{proof}
  Take a critical \et-term~$e$ of maximum degree among those of
  maximal rank in~$\pi$, let $\Gamma(e)$ be the critical formulas
  belonging to~$e$, $\Gamma$ the remaining critical formulas, and
  suppose the end-formula is~$D(e)$. Since all criticial formulas
  $A(u) \lif A(e)$ (or $A(e) \lif A(u)$) are weak, $e$ does not occur
  in~$u$, i.e., all critical formulas in~$\Gamma(e)$ are predicative.
  By Lemma~\ref{lem:pred-lin}, $e$ has an $e$-elimination set~$T$, and
  correspondingly $\Gamma[T] \vdash_{\LC} \bigvee_t D(t)$.
  However, since the critical formulas in~$\Gamma$ are also weak, they
  do not contain~$e$, hence $\Gamma[T] = \Gamma$. The result follows
  by the same inductive proof as the first \et-theorem.
\end{proof}

The extended first \et-theorem guarantees the existence of Herbrand
disjunctions for existential theorems, i.e., if $E \equiv \exists
x_1\dots x_n D(x_1, \dots, x_n)$ and $\vdash E$ then $\vdash \bigvee_{i} D(t_{1i},
\dots, t_{ni})$. The existence of a Herbrand disjunction, conversely,
guarantees the existence of a proof of $E^\et$ for which a predicative
elimination sequence exists.

\begin{prop}\label{prop:disj-pred-elim}
  If $\vdash \bigvee_{i} D(t_{1i},
  \dots, t_{ni})$ then there is an \et-derivation of $[\exists
  x_1\dots\exists x_n\,D(x_1, \dots, x_n)]^\et$ for which a predicative
  elimination sequence exists.
\end{prop}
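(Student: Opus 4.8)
The plan is to exhibit an explicit $\etl{\LC}$-derivation of $[\exists x_1 \dots \exists x_n\,D]^\et$ all of whose critical formulas are \emph{weak}, and then to invoke the preceding proposition, whose proof converts any such derivation into a predicative elimination sequence. First I would unfold the $\et$-translation into its nested $\eps$-terms. Writing $D$ for the quantifier-free matrix, so that $D^\et \equiv D$, set $G_n(x_1, \dots, x_n) \equiv D(x_1, \dots, x_n)$ and, for $j = n, n-1, \dots, 1$,
\[
g_j(x_1, \dots, x_{j-1}) \equiv \meps{x_j}{G_j(x_1, \dots, x_{j-1}, x_j)},
\qquad
G_{j-1}(x_1, \dots, x_{j-1}) \equiv G_j(x_1, \dots, x_{j-1}, g_j(x_1, \dots, x_{j-1})).
\]
Then $[\exists x_1 \dots \exists x_n\,D]^\et \equiv G_0$, which is $D(E_1, \dots, E_n)$ for $E_1 \equiv g_1$ and $E_j \equiv g_j(E_1, \dots, E_{j-1})$.

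Next I would lift each Herbrand disjunct to $G_0$ one coordinate at a time, from the innermost quantifier outward. Fix a disjunct $D(t_{1i}, \dots, t_{ni})$. For $j = n, n-1, \dots, 1$ the critical formula belonging to the $\eps$-term $g_j(t_{1i}, \dots, t_{j-1,i})$ with witness $t_{ji}$ is
\[
G_j(t_{1i}, \dots, t_{j-1,i}, t_{ji}) \lif G_j(t_{1i}, \dots, t_{j-1,i}, g_j(t_{1i}, \dots, t_{j-1,i})),
\]
and by the defining identity for $G_{j-1}$ its consequent is exactly $G_{j-1}(t_{1i}, \dots, t_{j-1,i})$. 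Chaining these $n$ critical formulas telescopes to $D(t_{1i}, \dots, t_{ni}) \lif G_0$. As every disjunct implies $G_0$, intuitionistic $\lor$-elimination gives $\bigvee_i D(t_{1i}, \dots, t_{ni}) \lif G_0$, and one application of modus ponens to the assumed Herbrand disjunction yields an $\etl{\LC}$-derivation of $G_0 \equiv [\exists x_1 \dots \exists x_n\,D]^\et$.

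It then remains to check that every critical formula in this derivation is weak: each has as its witness one of the Herbrand terms $t_{ji}$, and these are $\et$-free, so they contain no $\et$-term whatsoever, in particular no critical one. Consequently the construction in the proof of the preceding proposition applies directly: at each stage one removes a critical $\et$-term of maximal degree among those of maximal rank, whose critical formulas---being weak---are all predicative and hence admit a complete $e$-elimination set by Lemma~\ref{lem:pred-lin}, while the remaining (weak, hence $e$-free) critical formulas are left unchanged. This is exactly a predicative elimination sequence for the derivation, which is what is required.

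The only real work is bookkeeping, and this is also where the one genuine subtlety lies. I would need to verify the telescoping identity $G_j(\dots, g_j(\dots)) \equiv G_{j-1}(\dots)$ and, more importantly, insist that the intermediate $\eps$-terms be the substitution instances $g_j(t_{1i}, \dots, t_{j-1,i})$ with $\et$-free arguments rather than the fully instantiated $E_1, \dots, E_{j-1}$. Lifting a coordinate through the instantiated terms would in general produce impredicative critical formulas, as the example preceding this proposition shows; it is precisely the freedom to witness each coordinate by its own Herbrand term that keeps every critical formula weak and thereby secures the predicative elimination sequence.
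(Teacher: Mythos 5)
Your proof is correct and follows essentially the same route as the paper's: construct the nested $\eps$-terms of the $\et$-translation, use the Herbrand terms as witnesses of critical formulas to telescope each disjunct up to $[\exists x_1\dots\exists x_n\,D]^\et$, observe that all critical formulas are predicative (indeed weak, since the witnesses are $\et$-free), and eliminate outermost-first following the rank ordering. The paper only works out the case $n=2$ with two disjuncts ``as an example,'' so your general construction---including the correct insistence that the witnesses be the $\et$-free Herbrand terms rather than the fully instantiated $\eps$-terms---is a faithful and somewhat more complete rendering of the same argument.
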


\begin{proof}
  We give an example only. Suppose
$\proves{\LC} D(s_1, t_1) \lor D(s_2, t_2)$. First, consider
\begin{align*}
  e(x) & \equiv \meps y D(x, y)
  \intertext{Then both}
  C_1(e(s_1)) & \equiv D(s_1, t_1) \lif D(s_1, e(s_1))\\
  C_2(e(s_2)) & \equiv D(s_2, t_1) \lif D(s_2, e(s_2))\\
\intertext{are predicative critical formulas. Since $ \proves{\LC} D(s_1, t_1) \lor D(s_2, t_2)$ we get from them
$D(s_1, e(s_1)) \lor D(s_2, s_2)$. Now let}
e' & \equiv \meps x D(x, e(x)).\\
 C_3(e') & \equiv D(s_1, e(s_1)) \lif D(e', e(e'))\\
 C_4(e') & \equiv D(s_2, e(s_2)) \lif D(e', e(e'))
\intertext{are also predicative critical formulas. Together we have a proof of
$D(e', e(e')) \equiv [\exists x\exists y D(x,y)]^\et$. Then}
& \langle\{C_3(e'), C_4(e')\}, \{s_1, s_2\}\rangle\\
& \langle\{C(_1(e(s_1)))\}, \{t_1\}\rangle\\
& \langle \{C(_2(e(s_2)))\},\{t_2\}\rangle
\end{align*}
is an elimination sequence. In fact it is an elimination sequence
following Hilbert's ordering, since $e'$ has higher rank than
$e(s_1)$ and~$e(s_2)$. In each step, only predicative critical
formulas are generated. It produces the original Herbrand disjunction.
\end{proof}

\section{The First \et-Theorem and Order Induction}
\label{sec:order}

In arithmetic, the usual methods for eliminating critial formulas
based on the extended first \eps-theorem do not work; and so
consistency proofs for systems based on the \eps-calculus use
other methods such as the \eps-substitution method (see
\citealt{Ackermann1940} and \citealt{Moser2006}; the history of the
two approaches is discussed in \citealt{Zach2004a}). The methods
developed for the extended first \et-theorem to eliminate predicative
critical formulas from proofs in $\etl{\LC}$ above can, however, also
be applied in classical theories of order (including arithmetic).
Suppose $T$ is a universal theory involving a relation~$<$, and
consider the order induction rule~\IR,
\[
\AxiomC{$\forall x(x < y \lif A(x)) \lif A(y)$}
\RightLabel{\IR}
\UnaryInfC{$A(t)$}
\DisplayProof
\]
We denote by $\vdash_<$ the derivability relation generated by
classical logic extended by~\IR. The resulting system is equivalent to
adding to classical first-order logic the order induction principle for~$<$,
\begin{equation*}
  \tag{\IP}
  \forall y((\forall x(x < y \lif A(x)) \lif A(y)) \lif \forall z\,A(z).
\end{equation*}

\begin{prop}
  $T \vdash_< A$ iff $T + \IP \vdash A$
\end{prop}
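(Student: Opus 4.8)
The plan is to prove the two directions separately: in each case I would show that the distinctive ingredient of one system is simulable in the other, and then lift this to full derivations by a routine induction on proof length.

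For the direction $T \vdash_< A \Rightarrow T + \IP \vdash A$, I would show that the rule \IR is derivable in $T + \IP$. Suppose an application of \IR passes from a premise $\forall x(x < y \lif A(x)) \lif A(y)$ to the conclusion $A(t)$. Here $y$ plays the role of an eigenvariable; since $T$ consists of sentences, $y$ is not free in $T$, and I would read the rule as carrying the usual side condition that $y$ is not free in any open assumption. Hence generalization on $y$ is legitimate and yields $\forall y(\forall x(x < y \lif A(x)) \lif A(y))$, which is exactly the antecedent of the matching instance of \IP. One modus ponens then gives $\forall z\,A(z)$, and universal instantiation gives $A(t)$. Replacing each \IR-inference in a $\vdash_<$-derivation by this block shows $T + \IP \vdash A$.

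For the converse, $T + \IP \vdash A \Rightarrow T \vdash_< A$, it suffices to derive every instance of \IP in $\vdash_<$, since $\vdash_<$ already contains classical logic and is closed under its rules. Fix an instance and assume its antecedent $H \equiv \forall y(\forall x(x < y \lif A(x)) \lif A(y))$. Instantiating $H$ at a fresh variable $y$ recovers precisely the premise of \IR; applying \IR with $t$ taken to be a fresh variable $z$ yields $A(z)$, and generalization gives $\forall z\,A(z)$. Discharging $H$ via the deduction theorem produces $H \lif \forall z\,A(z)$, the desired instance of \IP.

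The delicate point—and the step I would treat most carefully—is the bookkeeping of free variables around the eigenvariable condition of \IR. In the first direction it is what licenses generalizing on $y$; in the second it is what makes the application of \IR inside the scope of the open assumption $H$ legitimate, since $y$ is bound in $H$ and hence not free in it, and any parameters of the schema $A$ can be kept disjoint from $y$ by choosing $y$ fresh, so that the deduction theorem applies without obstruction. Once these variable conditions are verified, both simulations go through uniformly, and the two inductions on derivation length complete the equivalence.
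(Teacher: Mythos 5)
Your first direction coincides with the paper's: derive the premise of \IR{} from $T$ alone, generalize on the eigenvariable, and apply the corresponding instance of \IP{} by modus ponens. The second direction, however, has a genuine gap. You assume the antecedent $H \equiv \forall y(\forall x(x < y \lif A(x)) \lif A(y))$ as an open hypothesis, apply \IR{} under that hypothesis, and then discharge $H$ "via the deduction theorem." But $\vdash_<$ is not plain classical logic: it is classical logic extended by a new rule of inference, and the deduction theorem for such an extension is exactly what has to be checked, case by case, for the new rule. The \IR-case of that check is: from $T \vdash_< H \lif (\forall x(x<y \lif A(x)) \lif A(y))$ conclude $T \vdash_< H \lif A(t)$. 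This does not follow by simply "applying \IR{} inside the scope of $H$"; one must first rewrite the implication as an instance of the \IR-premise for the \emph{relativized} formula $B(x) \equiv H \lif A(x)$ (legitimate because $y$ is not free in $H$), and only then apply \IR{} to $B$. That relativization is precisely what the paper does directly: it applies \IR{} to $P_A \lif A(v)$ rather than to $A(v)$, obtaining $\vdash_< P_A \lif A(z)$ and hence $\vdash_< \IP$ without ever invoking a deduction theorem for $\vdash_<$. So the single nontrivial step of the whole proposition is hidden inside your unexamined appeal to the deduction theorem; you correctly identify the variable bookkeeping as the delicate point, but the real obstruction is not the eigenvariable condition — it is that the deduction theorem must be re-proved for the rule \IR, and its proof \emph{is} the paper's argument.

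Once that case of the deduction theorem is supplied (or, equivalently, once you replace "assume $H$, apply \IR, discharge" by "apply \IR{} to $H \lif A(v)$"), your argument closes and agrees with the paper's.
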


\begin{proof}
  The ``only if'' direction follows by observing that if 
  \begin{align*}
    T & \vdash \forall x(x<y \lif A(x)) \lif A(y) \text{ then also}\\
    T & \vdash \forall y(\forall x(x<y \lif A(x)) \lif A(y))
  \end{align*}
  and so $A(t)$ follows from~\IP{} and $\forall x\,A(x) \lif A(t)$ by
  modus ponens. For the ``if'' direction, let $P_A$ be
  \[\forall y(\forall x(x < y \lif A(x)) \lif A(y))\]
  Then by logic,
  \begin{align*}
    & \vdash \forall u(u < v \lif A(u)) \lif (P_A \lif A(v)) \text{ and so}\\
    & \vdash \forall u(u < v \lif (P_A \lif A(u))) \lif (P_A \lif A(v))\\
    & \vdash_< P_A \lif A(z) \text{ by \IR, and consequently}\\
    & \vdash_< P_A \lif \forall z\,A(z).
  \end{align*}
  Thus, $\vdash_< \IP$.
\end{proof}

Now consider the classical \eps-calculus extended by critical formulas
of the form
\[
  A(t) \lif \lnot t < \meps x A(x)
\] 
These critical formulas are obviously equivalent to Hilbert's
``critical formulas of the second form,'' $A(t) \lif \meps x A(x) \le
t$, over a theory that proves that $<$ is trichotomous. If $A$ is
derivable from $T$ and ordinary critical formulas for \eps-terms and
critical formulas of this second kind, we write $T \vdash_\eo A$.  The
standard translation $A^\eps$ of a formula~$A$ is defined as in
Definition~\ref{defn:et-trans}, except $\forall x\,A(x)^\eps \equiv
A^\eps(\meps x \lnot A^\eps(x))$. Then we can show:

\begin{prop}
  If $T \vdash_< A$ then $T^\eps \vdash_\eo A^\eps$.
\end{prop}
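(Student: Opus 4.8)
The plan is to argue by induction on the derivation witnessing $T \vdash_< A$, showing that every formula~$B$ occurring in it satisfies $T^\eps \vdash_\eo B^\eps$. Every step that is an axiom of~$T$, a propositional or quantifier axiom, or an application of modus ponens or a quantifier rule is handled by the standard translation of classical first-order derivations into the $\eps$-calculus (the ``only if'' direction mentioned in the introduction, relativized to the theory~$T$); each translated step then lies in $T^\eps \vdash_\eo$, since $\vdash_\eo$ extends the classical $\eps$-calculus. The only genuinely new case is an application of the order-induction rule~$\IR$, and it is here that the critical formulas of the second form do the work.

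For the $\IR$ case, suppose the premise $\forall x(x < y \lif A(x)) \lif A(y)$ has been derived with eigenvariable~$y$, so that the rule yields $A(t)$. Set $e \equiv \meps z{\lnot A^\eps(z)}$, so that $[\forall z\,A(z)]^\eps = A^\eps(e)$. Two facts about~$e$ will be used: the ordinary critical formula $\lnot A^\eps(t) \lif \lnot A^\eps(e)$ for~$e$ yields, by contraposition, $A^\eps(e) \lif A^\eps(t)$; and the second-form critical formula for~$e$ instantiated at a term~$w$, namely $\lnot A^\eps(w) \lif \lnot(w < e)$, yields by contraposition $w < e \lif A^\eps(w)$. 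By the induction hypothesis $T^\eps \vdash_\eo [\forall x(x < y \lif A(x)) \lif A(y)]^\eps$; since $y$ is not free in~$T$ by the eigenvariable condition, and substitution leaves the axioms of~$T^\eps$ untouched and carries critical formulas to critical formulas, I substitute $e$ for~$y$ throughout. Writing $w \equiv \meps x{\lnot(x < e \lif A^\eps(x))}$ and noting $[\forall x(x < e \lif A(x))]^\eps = (w < e \lif A^\eps(w))$, this yields
\[
  T^\eps \vdash_\eo (w < e \lif A^\eps(w)) \lif A^\eps(e).
\]
The antecedent is exactly the consequence $w < e \lif A^\eps(w)$ of the second-form critical formula for~$e$ at~$w$, so modus ponens gives $A^\eps(e)$, and a further modus ponens with $A^\eps(e) \lif A^\eps(t)$ gives $A^\eps(t) = A(t)^\eps$. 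This closes the induction.

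The one place demanding care is the bookkeeping of the nested $\eps$-translation: one must verify that translating the premise and substituting $e$ for~$y$ produces an antecedent that coincides exactly with the contrapositive of a second-form critical formula belonging to~$e$. This coincidence is what forces the second-form critical formulas into the calculus; they express that~$e$ is a \emph{least} point at which~$A$ can fail, which is the $\eps$-theoretic content of order induction. A parallel route, using the preceding proposition, would replace $T \vdash_< A$ by $T + \IP \vdash A$ and then establish $\vdash_\eo \IP^\eps$ directly; this isolates the same computation but in addition requires handling the $\eps$-term $\meps y{\lnot[(\forall x(x<y\lif A(x)) \lif A(y))^\eps]}$ arising from the leading universal quantifier of~$\IP$ through its own ordinary critical formula, and is otherwise identical.
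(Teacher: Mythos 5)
Your proof is correct and follows essentially the same route as the paper: the standard steps are handled by translating the derivation as in Lemma~\ref{embed}, and for \IR{} you substitute $\meps z{\lnot A^\eps(z)}$ for the eigenvariable, discharge the antecedent $(w < e \lif A^\eps(w))$ via the contrapositive of a second-form critical formula, and recover $A^\eps(t)$ from an ordinary critical formula for $\lnot A^\eps$. Your $e$ and $w$ are exactly the paper's $e'$ and $e(e')$.
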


\begin{proof}
  As in the proof of Lemma~\ref{embed}. We just have to deal with
  application of~\IR. Suppose we have a derivation of the
  \eps-translation of the premise of~\IR,
  \begin{align*}
    & (e(y) < y \lif A^\eps(e(y)) \lif A^\eps(y)
  \intertext{where $e(y) \equiv \meps x\lnot (x<y \lif A^\eps(x))$ is the
  \eps-term used in the translation of $\forall x(x < y \lif A(x))$. By 
  substituting $e' \equiv \meps z \lnot A^\eps(z)$ for~$y$ throughout the 
  proof, we obtain}
  & (e(e') < e' \lif A^\eps(e(e')) \lif A^\eps(e')
  \intertext{Take the critical formula of second kind 
  $\lnot A^\eps(t) \lif \lnot t < \meps z \lnot A^\eps(z)$
  and let $t$ be~$e(e')$. By contraposition, we have}
  & e(e') < e' \lif A^\eps(e(e')) \text{ and so}\\
  & A^\eps(e')
  \intertext{by modus ponens. The conclusion of~\IR, $A^\eps(t)$, 
  now follows from an ordinary critical formula
  belonging to $\lnot A^\eps(z)$, viz.,}
  & \lnot A^\eps(t) \lif \lnot A^\eps(e').
\end{align*}  
\end{proof}

Arithmetic does not have a Herbrand theorem, and thus also no first
\eps-theorem. However, Herbrand disjunctions exists for formulas
$\exists \vec x\, E(\vec x)$ iff the critical formulas belonging to
the \eps-terms $e_1$, \dots, $e_n$ in the standard \eps-translation
$E^\eps(e_1, \dots, e_n)$ of $\exists x_1\dots \exists x_n\,E(x_1,
\dots, x_n)$ can be eliminated by a predicative elimination sequence.
This mirrors the situation in \LC{} discussed in
Section~\ref{sec:LIN}. The proof that critical formulas can be
eliminated if a predicative elimination
sequence exists is similar. Corresponding to Lemma~\ref{lem:bigdisj}
we'll need the following:

\begin{lem}\label{lem:disj-less}
  Let $V$ be a finite set of variables and assume that
  \begin{align*}
    T & \vdash \forall x\,\lnot x < x \tag{\textit{Irr}}\label{eq:irr}\\
    T & \vdash \forall x \forall y \forall z((x < y \lif (y < z \lif x < z)) \tag{\textit{Trans}}
    \label{eq:trans}
  \end{align*}
  Then $T \vdash \bigvee_{x \in
  V} \bigwedge_{y \in V} \lnot y < x$.
\end{lem}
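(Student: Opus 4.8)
The plan is to argue by induction on the cardinality of the (nonempty) set $V$, using that the ambient logic is classical---so that one may split on $z < x$ versus $\lnot z < x$ for any variables---together with (\textit{Irr}) and (\textit{Trans}). Semantically the assertion merely says that a finite strict partial order has a minimal element, and the syntactic proof mirrors the standard argument for that fact. In this sense the lemma is the order-theoretic analogue of Lemma~\ref{lem:bigdisj}: there \LIN{} guarantees a truth-value minimum, while here the classical case distinction and the order axioms do the same work.

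For the base case $V = \{x\}$, the target disjunction collapses to the single conjunct $\lnot x < x$, which is an instance of (\textit{Irr}). For the inductive step I would write $V = V' \cup \{z\}$ with $z \notin V'$, and let $D'$, $D$ be the asserted disjunctions for $V'$ and $V$. By the induction hypothesis $T \vdash D'$, so it suffices to show that each disjunct of $D'$ classically implies $D$. Fixing a disjunct $\bigwedge_{y \in V'} \lnot y < x$ (which says $x$ is minimal in $V'$), I would split on $z < x$. If $\lnot z < x$, then conjoining with the fixed disjunct gives $\bigwedge_{y \in V} \lnot y < x$, the $x$-disjunct of~$D$. If instead $z < x$, I claim $z$ is minimal in $V$: for any $y \in V'$, assuming $y < z$ would give $y < x$ by (\textit{Trans}), contradicting $\lnot y < x$; hence $\lnot y < z$ for all $y \in V'$, and $\lnot z < z$ by (\textit{Irr}), so $\bigwedge_{y \in V} \lnot y < z$, the $z$-disjunct of~$D$. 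Either way $D$ follows, and disjunction elimination on $D'$ completes the step.

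The argument is elementary, and the hard part is only the bookkeeping of the inductive step: one must correctly invoke (\textit{Trans}) to transfer minimality from $x$ to the newly added $z$ in the case $z < x$, and note that the classical split on $z < x$ is precisely what licenses choosing the new minimal element. This is also where finiteness of $V$ is used, since the minimal element is located by finitely many such case distinctions, one per variable added.
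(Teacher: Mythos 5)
Your proof is correct, but it takes a genuinely different route from the paper's. The paper argues semantically and by contradiction: if $T \vdash \bigvee_{x\in V}\bigwedge_{y\in V}\lnot y<x$ failed, then $T$ together with $\bigwedge_{x\in V}\bigvee_{y\in V} y<x$ would be satisfiable (implicitly invoking completeness of classical first-order logic); in any model one then extracts an infinite descending chain among the finitely many values $s(x)$ for $x\in V$, which by finiteness must revisit a value and so contradicts irreflexivity and transitivity. Your argument is instead a direct syntactic induction on the cardinality of $V$, locating the minimal element by a classical case split on $z<x$ at each step and transferring minimality via (\textit{Trans}). What your approach buys is an explicit derivation in the calculus with no detour through model theory, which is arguably more in the spirit of a proof-theoretic paper and makes visible exactly where excluded middle and the two order axioms are used; what the paper's approach buys is brevity, and it matches the style of its Lemma~\ref{lem:bigdisj}, which is likewise verified semantically. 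Both arguments require $V$ to be nonempty (the empty disjunction is $\bot$), which you note and the paper leaves tacit when it fixes $x_1\in V$.
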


\begin{proof}
  Suppose not. Then $T + \bigwedge_{x \in
  V} \bigvee_{y \in V} y < x$ would be satisfiable. Let $\mathfrak{M}$
  and $s$ be the corresponding structure and variable assignment. Fix
  $x_1 \in V$. Since $\mathfrak{M}, s \models \bigvee_{y \in V} y < x_1$,
  for some $x_2 \in V$, $s(x_2) <^\mathfrak{M} s(x_1)$. Continuing, we
  obtain $x_1$, \dots, $x_n \in V$ such that $s(x_{i+1}) <^\mathfrak{M}
  s(x_i)$ for any~$n$. Since $V$ is finite, eventually $x_i \equiv
  x_{i+k}$, contradicting the assumption that any model of $T$ makes
  $<$ irreflexive and transitive.
\end{proof}

\begin{thm}\label{thm:eps-subst}
  Suppose $T$ is as in Lemma~\ref{lem:disj-less}. Then $T \vdash
  \bigvee_{i=1}^k E(t_{1i}, \dots, t_{ni})$ for some terms $t_{ji}$
  iff there is a derivation of $E^\eps(e_1, \dots, e_n)$ which has a
  predicative elimination sequence.
\end{thm}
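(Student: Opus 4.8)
The plan is to prove both directions by transplanting the predicative-elimination machinery of Section~\ref{sec:LIN} to the order setting, with Lemma~\ref{lem:disj-less} playing the role that Lemma~\ref{lem:bigdisj} (and hence \LIN) played there. I treat the two directions separately.

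For the \emph{only if} direction ($T$ proves a Herbrand disjunction $\Rightarrow$ a predicative elimination sequence exists) I would argue exactly as in the \LC{} case (Proposition~\ref{prop:disj-pred-elim}). Given $T \vdash \bigvee_{i=1}^k E(t_{1i}, \dots, t_{ni})$, I build the nested \eps-terms $e_1$, \dots, $e_n$ from the inside out---so that $e_n$ collects the witnesses for the last variable, $e_{n-1}$ those for the next, and so on---together with the ordinary critical formulas that turn each disjunct $E(t_{1i}, \dots, t_{ni})$ into $E^\eps(e_1, \dots, e_n)$. The reversal of this construction is a complete elimination sequence following the Hilbert--Bernays rank ordering, and by construction every critical formula eliminated is predicative, since the Herbrand terms $t_{ji}$ contain none of the $e_l$. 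The theory $T$ and the order axioms are simply carried along; no use of $<$ is needed here, as the eliminations are of ordinary predicative critical formulas and classical logic proves \LIN.

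For the \emph{if} direction I would run the given predicative elimination sequence and show each step already succeeds in $T$, so that Proposition~\ref{prop:first-et} delivers the Herbrand disjunction. The crux is an order-analogue of Lemma~\ref{lem:pred-lin} for critical formulas of the second kind. Suppose the predicative second-kind critical formulas belonging to $e \equiv \meps x{A(x)}$ are $A(u_j) \lif \lnot u_j < e$, with the $u_j$ not containing~$e$. Substituting a witness $u_l$ for $e$ turns each into $A(u_j) \lif \lnot u_j < u_l$, so Lemma~\ref{lem:facts}(\ref{disj}) gives
\[
  \textstyle\bigcup_l \Gamma[u_l/e],\ \bigvee_l \bigwedge_j (A(u_j) \lif \lnot u_j < u_l) \vdash \bigvee_l D(u_l).
\]
Since $\lnot u_j < u_l \vdash A(u_j) \lif \lnot u_j < u_l$, the middle formula follows from $\bigvee_l \bigwedge_j \lnot u_j < u_l$, which is provable in $T$ by Lemma~\ref{lem:disj-less} (some witness is $<$-minimal). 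Hence $\{u_1, \dots, u_p\}$ is a complete $e$-elimination set working already in~$T$. Predicative ordinary critical formulas $A(u_j) \lif A(e)$ are eliminated as in Lemma~\ref{lem:pred-lin}; where a single $e$ carries both kinds, the two arguments are merged into one complete elimination set via Lemma~\ref{lem:facts}(\ref{disj}). Termination is then guaranteed by the Hilbert--Bernays ordering as in Proposition~\ref{prop:term-elim-seq}.

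I expect the main obstacle to be the bookkeeping that keeps the elimination sequence \emph{predicative and well-founded} while mixing the two kinds of critical formula. Concretely, one must check the order-side versions of Lemma~\ref{lem:rep-eps-crit} and Lemma~\ref{lem:rep-eps-lower}: that replacing $e$ by a witness $u_l$ sends a second-kind critical formula belonging to another term again to one of the second kind, and never raises rank or degree, so that eliminating maximal \eps-terms first cannot reintroduce impredicative second-kind formulas downstream. The case analysis for the second-kind formulas, where the \eps-term sits inside $\lnot t < \meps x{A(x)}$ rather than inside $A(\meps x{A(x)})$, is the one place where the order setting genuinely diverges from Section~\ref{sec:LIN} and must be verified separately.
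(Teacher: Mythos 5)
Your overall architecture matches the paper's: the ``only if'' direction by the witness-term construction of Proposition~\ref{prop:disj-pred-elim}, and the ``if'' direction by running the given elimination sequence and replacing Lemma~\ref{lem:bigdisj} with Lemma~\ref{lem:disj-less} as the source of the provable disjunction. The gap is in your treatment of an \eps-term~$e$ that carries predicative critical formulas of \emph{both} kinds, $A(u_i) \lif A(e)$ and $A(u_i) \lif \lnot u_i < e$. You propose to eliminate the first kind ``as in Lemma~\ref{lem:pred-lin}'' and the second kind by $<$-minimality, then ``merge the two arguments via Lemma~\ref{lem:facts}(\ref{disj}).'' That merge does not work: a substitution of $u_j$ for $e$ acts on all critical formulas belonging to~$e$ simultaneously, so you need a \emph{single} witness $u_j$ for which all of $\bigwedge_i(A(u_i) \lif A(u_j))$ and $\bigwedge_i(A(u_i) \lif \lnot u_i < u_j)$ hold, i.e.\ provability of $\bigvee_j\bigl[\bigwedge_i(A(u_i)\lif A(u_j)) \land \bigwedge_i(A(u_i)\lif \lnot u_i<u_j)\bigr]$. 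Lemma~\ref{lem:bigdisj} gives an $A$-maximal witness and Lemma~\ref{lem:disj-less} a $<$-minimal one, but these need not coincide, and $\bigvee_j P_j$ together with $\bigvee_j Q_j$ does not yield $\bigvee_j(P_j\land Q_j)$; Lemma~\ref{lem:facts}(\ref{disj}) only disjoins derivations with distinct disjuncts of hypotheses and cannot collapse them onto one witness.

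The paper closes exactly this gap with a classical case distinction absent from your sketch: for each subset $V$ of the witness terms it forms $C_V \equiv \bigwedge_{t\in V}A(t)\land\bigwedge_{t\notin V}\lnot A(t)$, notes that $C_V$ alone discharges every critical formula whose antecedent $A(t_j)$ has $t_j\notin V$ as well as all substituted first-kind formulas $A(t_j)\lif A(t_i)$ for $t_i\in V$, and then applies Lemma~\ref{lem:disj-less} \emph{within}~$V$ to pick a $<$-minimal $t_i\in V$ handling the remaining second-kind formulas; since $\bigvee_{V}C_V$ is a classical tautology, the disjunction over all pairs $(V, t_i)$ is provable in~$T$. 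You should replace your ``merge'' step with this construction. Separately, your final appeal to Proposition~\ref{prop:term-elim-seq} for termination is unnecessary and misleading: the theorem is a biconditional conditioned on the \emph{existence} of a predicative elimination sequence, so well-foundedness is part of the hypothesis in the ``if'' direction; no general termination claim is available (or needed) here, which is consistent with arithmetic lacking a Herbrand theorem.
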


\begin{proof}
  For the ``only if'' part, proceed as in the procedure outlined in the
  proof of Proposition~\ref{prop:disj-pred-elim}. For the ``if'' part,
  we have to show that if the critical formulas belonging to an
  \eps-term are predicative, they can be eliminated. Without loss of
  generality we may assume that for each term~$t_i$, a corresponding
  critical formula of first and of second kind are both present. So
  suppose $e$ is a critical \eps-term and $\Theta, \Gamma, \Pi(e),
  \Pi'(e) \vdash_\eo D(e)$ where $\Pi(e)$ and and $\Pi'(e)$ consist
  of, respectively,
  \begin{align*}
    A(t_1) & \lif A(e),  & \dots, &  & A(t_m) & \lif A(e) \\
    A(t_1) & \lif \lnot t_1 < e, & \dots,& & A(t_m) & \lif \lnot t_m < e,
  \end{align*}
  and $\Theta$ consists of instances of formulas in~$T$.
  
  Let $W = \{t_1, \dots, t_m\}$. If $V \subseteq W$, let $C_V$ be
  \[
    \bigwedge_{t\in V} A(t) \land \bigwedge_{t \in W\setminus V} \lnot A(t)
  \]
  and let $\Pi_V(e)$ be the critical formulas with terms~$t_i \in V$ and
  $\Pi_{W\setminus V}(e) = \Pi(e) \setminus \Pi_V(e)$, and similarly for
  $\Pi'_V(e)$ and $\Pi'_{T\setminus V}(e)$. Since $\lnot A(t)
  \vdash A(t) \lif B$, we have $C_V \vdash \Pi_{W\setminus V}(e)$ and
  $C_V \vdash \Pi'_{W\setminus V}(e)$, and so
  \begin{align*}
    \Theta, \Gamma, C_V, \Pi_V(e), \Pi'_V(e) & \vdash_\eo D(e)
    \intertext{Since the critical formulas in $\Pi(e)$ are predicative, $t_i$ 
    does not contain~$e$ and so $C_V[t_i/e] = C_V$. Since $C_V \vdash A(t_i)$ 
    for every $t_i \in V$, $C_V \vdash \Pi_V(t_i)$. So we also have}
    \Theta[t_i/e], \Gamma[t_i/e], C_V, \Pi'_V(t_i) & \vdash_\eo D(t_i)
    \intertext{for each $t_i \in V$ and consequently}
    \Theta[V], \Gamma[V], C_V, \bigvee_{t \in V} \bigwedge_{u\in V} 
    (A(u) \lif \lnot u < t) & \vdash_\eo \bigvee_{t \in V} D(t_i)
  \intertext{Since this is true for every $V \subseteq W$, we get}
    \Theta[V], \Gamma[W], \bigvee_{V \subseteq W} C_V, \Xi & 
    \vdash_\eo \bigvee_{t \in W} D(t_i)
  \intertext{if we let $\Xi = \{ B_V:V \subseteq W\}$ where $B_V$ is 
  $\bigvee_{t \in V} \bigwedge_{u\in V} (A(u) \lif \lnot u < t)$. Now 
  $\bigvee_{V \subseteq W} C_V$ is itself provable in classical
  logic. So as the result of one elimination step, we get}
  \Theta[V], \Gamma[W], \Xi & 
  \vdash_\eo \bigvee_{t \in W} D(t_i)
  \intertext{If there is a predicative elimination sequence, we have in 
  the end terms $t_{ij}$ such that}
  \Theta', \Xi' &\vdash_\eo \bigvee_{i=1}^k E(t_{1i}, \dots, t_{ni})
  \intertext{where $\Theta'$ are all the instances of formulas of~$T$ 
  produced in the elimination, and $\Xi'$ are all the formulas of the 
  form~$B_V$, possibly with epsilon terms replaced by other terms. 
  $T \vdash A$ for each $A \in \Theta'$. By Lemma~\ref{lem:disj-less},}
    T & \vdash \bigvee_{t\in V}\bigwedge_{u\in V} \lnot u < t
  \intertext{and so each formula $B_V \in \Xi'$ is also provable from~$T$. 
  Together we have,}
  T &\vdash \bigvee_{i=1}^k E(t_{1i}, \dots, t_{ni})
  \end{align*}
\end{proof}
  
\section{Open Problems}

We have investigated the \et-calculi for intermediate logics, with a
focus on the extended first \et-theorem. We showed that the only
intermediate logics with an  extended first \et-theorem are the
finite-valued G\"odel logics~$\LC_m$, but obtained partial results for
formulas of specific form or with specific kinds of proofs for other
intermediate logics.  

The natural next question to investigate is the second \eps-theorem,
i.e., to investigate extended \et-calculi for intermediate predicate
logics and characterize those logics for which the extended
\et-calculus is conservative. We have shown that $\etx{\Lo}$ proves
all quantifier shifts, so $\etx{\Lo}$ is not conservative over any
$\ql{\Lo}$ where these are not provable. Note that this question is
not automatically settled by the answer to the question of which
logics have the extended first \et-theorem. Rather, it is a question orthogonal
and requires other proof systems for a proper investigation, such as
sequent calculi for \et-terms. For instance, \citet[Theorem
5.4]{AguileraBaaz2019} show that the standard translation~$\Gamma
\Rightarrow D^\eps$ of a sequent~$\Gamma \Rightarrow D$ is provable in
a sequent calculus $\mathbf{LJ}^\eps$ for intuitionistic logic iff
$\Gamma \Rightarrow D$ is provable in a special version
$\mathbf{LJ}^{++}$ of intuitionistic sequent calculus which is
globally sound but allows violation of the eigenvariable condition.
$\mathbf{LJ}^{++}$ in turn proves $\Gamma \Rightarrow D$ iff
$\mathbf{LJ} + \CD + \Aout + \Eout \vdash \Gamma \Rightarrow D$
(Proposition~4.4). In other words, the second \eps-theorem holds for
intutionistic predicate logic with all quantifier shifts. 

The methods used here are closely related to the study of the behavior
of Skolem functions in intermediate logics (of which \et-terms are in
many ways a syntactic variant), see, e.g., \cite{Iemhoff2019}. As
mentioned in the introduction, other approaches to adding
\eps-operators to intuitionistic logic yield systems that are
conservative over the original logic. Work on Skolemization in
intuitionistic logic is relevant here, and suggests that conservative
\eps-calculi can be obtained by introducing existence predicates. The
proof-theoretic approaches in the literature would benefit also from a
complementary model-theoretic study. A Kripke-style semantics for
\et-terms, with or without existence predicate, is still lacking (but
see \citealt{DeVidi1995} for a semantics based on Heyting algebras).

In Section~\ref{sec:LIN} we gave sufficient conditions for when
\et-terms can be eliminated from a proof~$\pi$ in $\etl{\LC}$. Are
there better (weaker) criteria that apply to more proofs? For
instance, there may be certain kinds of orderings such that if the
critical \et-terms in $\pi$ and corresponding ``witness terms'' can be
put into such an ordering, an elimination sequence in which only
predicative critical formulas exists. The same question applies for
the parallel condition in arithmetical theories in
Theorem~\ref{thm:eps-subst}.

\subsection*{Acknowledgements} The authors would like to thank Guram
Bezhanishvili, David Gabelaia, and the reviewer for the \textsc{Journal}.

\bibliographystyle{asl}

\end{document}